\newcommand\cmgout{\bgroup\markoverwith
{\textcolor{magenta}{\rule[.5ex]{2pt}{0.4pt}}}\ULon}
\definecolor{blue75}{rgb}{0,0,.75}
\definecolor{green75}{rgb}{0,.75,0}
\newcommand{\parenthezises}[1]{\arabic{#1}}
\crefname{equation}{}{}
\crefname{enumi}{}{}
\crefname{section}{{\it Section}}{{\it Sections}}
\crefname{subsection}{{\it Subsection}}{{\it Subsections}}
\crefname{subsubsection}{{\it Paragraph}}{{\it Paragraphs}}
\crefname{table}{{\it Table}}{{\it Tables}}
\crefname{figure}{{\it Figure}}{{\it Figures}}
\newtheorem{Theorem}{Theorem}[section]
\crefname{Theorem}{{\it Theorem}}{{\it Theorems}}
\newtheorem{Definition}[Theorem]{Definition}
\crefname{Definition}{{\it Definition}}{{\it Definitions}}
\newtheorem{Lemma}[Theorem]{Lemma}
\crefname{Lemma}{{\it Lemma}}{{\it Lemmas}}
\newtheorem{Assumptions}[Theorem]{Assumptions}
\crefname{Assumptions}{{\it Assumptions}}{{\it Assumptions}}
\theoremstyle{definition}
\newtheorem{Remark}[Theorem]{Remark}
\crefname{Remark}{{\it Remark}}{{\it Remarks}}
\newtheorem{Notation}[Theorem]{Notation}
\crefname{Notation}{{\it Notation}}{{\it Notations}}
\newtheorem{Example}[Theorem]{Example}
\crefname{Example}{{\it Example}}{{\it Examples}}
\begin{document}

\newcommand{\FOO}{n(n+1)}
\definecolor{darkgreen}{rgb}{0.1,0.8,0.5}
\definecolor{grau}{rgb}{0.1,0.3,0.8}
\definecolor{orange}{rgb}{1,0.2,0.4}
\definecolor{turcoaz}{rgb}{0.1,0.5,0.7}
\newcommand{\D}{\mathbb{D}}
\newcommand{\R}{\mathbb{R}}
\newcommand{\N}{\mathbb{N}}
\newcommand{\F}{\mathbb{F}}
\newcommand{\Om}{\Omega }
\newcommand{\eps}{\epsilon }

\def\diam{\operatorname{diam}}
\def\dist{\operatorname{dist}}
\def\ess{\operatorname{ess}}
\def\inner{\operatorname{int}}
\def\osc{\operatorname{osc}}
\def\sign{\operatorname{sign}}
\def\supp{\operatorname{supp}}
\newcommand{\BMO}{BMO(\Omega)}
\newcommand{\LOne}{L^{1}(\Omega)}
\newcommand{\LOnen}{(L^{1}(\Omega))^n}
\newcommand{\LTwo}{L^{2}(\Omega)}
\newcommand{\Lq}{L^{q}(\Omega)}
\newcommand{\Lp}{L^{p}(\Omega)}
\newcommand{\Lpn}{(L^{p}(\Omega))^n}
\newcommand{\Ltwon}{(L^{2}(\Omega))^n}
\newcommand{\LInf}{L^{\infty}(\Omega)}
\newcommand{\HOneO}{H^{1,0}(\Omega)}
\newcommand{\HTwoO}{H^{2,0}(\Omega)}
\newcommand{\HOne}{H^{1}(\Omega)}
\newcommand{\HTwo}{H^{2}(\Omega)}
\newcommand{\HmOne}{H^{-1}(\Omega)}
\newcommand{\HmTwo}{H^{-2}(\Omega)}

\newcommand{\LlogL}{L\log L(\Omega)}

\def\avint{\mathop{\,\rlap{-}\!\!\int}\nolimits} 

\newcommand{\cre}{c_{r\varepsilon}}
\newcommand{\vre}{v_{r\varepsilon}}
\newcommand{\crem}{c_{r\varepsilon m}}
\newcommand{\vrem}{v_{r\varepsilon m}}
\newcommand{\crel}{c_{r\varepsilon _m}}
\newcommand{\vrel}{v_{r\varepsilon _m}}

%
\numberwithin{equation}{section}
\title{Nonlocal and local models for taxis in cell migration: a rigorous limit procedure}
%
%
%
%

\makeatletter
\let\@fnsymbol\@alph
\makeatother

\author{
   Maria Krasnianski\thanks{Felix-Klein-Zentrum für Mathematik, Technische Universität Kaiserslautern, Paul-Ehrlich-Str. 31, 67663 Kaiserslautern, Germany, \href{mailto:krasnian@mathematik.uni-kl.de}{krasnian@mathematik.uni-kl.de}}, 
   \quad Kevin J. Painter\thanks{Department of Mathematics \& Maxwell Institute, Heriot-Watt University,
   Edinburgh, Scotland, EH14 4AS, \href{mailto:K.Painter@hw.ac.uk}{K.Painter@hw.ac.uk}},\quad Christina Surulescu\thanks{Felix-Klein-Zentrum für Mathematik, Technische Universität Kaiserslautern, Paul-Ehrlich-Str. 31, 67663 Kaiserslautern, Germany, \href{mailto:surulescu@mathematik.uni-kl.de}{surulescu@mathematik.uni-kl.de}
   },\quad and Anna Zhigun\thanks{School of Mathematics and Physics, Queen's University Belfast, University Road, Belfast BT7 1NN, Northern Ireland, UK, \href{mailto:A.Zhigun@qub.ac.uk}{A.Zhigun@qub.ac.uk}
   }
}
\date{\vspace{-5ex}}
\maketitle

\begin{abstract}
A rigorous limit procedure is presented which links nonlocal models involving adhesion or nonlocal chemotaxis to their local counterparts featuring haptotaxis and classical chemotaxis, respectively. It relies on a novel reformulation  of the involved nonlocalities in terms of  integral operators applied directly to the gradients of signal-dependent quantities.
The proposed approach handles both model types in a unified way and extends the  previous mathematical framework to settings that allow for  general solution-dependent coefficient functions. The previous forms of nonlocal operators are compared with the new ones introduced in this paper and the advantages of the latter are highlighted by concrete examples. Numerical simulations in 1D provide an illustration of some of the theoretical findings.
\\\\
{\bf Keywords}: cell-cell and cell-tissue adhesion; nonlocal and local chemotaxis; haptotaxis; integro-differential equations; unified approach; global existence; rigorous limit behaviour; weak solutions.
\\
MSC 2010: 
35Q92,  
92C17,  
35K55, 
35R09,  
47G20, 	
35B45, 
35D30. 

\end{abstract}

\section{Introduction}\label{intro}

Macroscopic equations and systems describing the evolution of populations in response to soluble and 
insoluble environmental cues have been intensively studied and the palette of such reaction-diffusion-taxis models is continuously expanding. Models of such form are motivated by problems arising in various contexts, a large part related to cell migration and proliferation connected to tumor invasion, embryonal development, wound healing, biofilm formation, insect behavior in response to chemical cues, etc. We refer, e.g. to \cite{BBTW} for a recent review also containing some deduction methods for taxis equations based on kinetic transport equations. 

\noindent
Apart from such purely local PDE systems with taxis, several spatially nonlocal models have been introduced over the last two decades and are attracting ever increasing interest. They involve integro-differential operators in one or several terms of the featured reaction-diffusion-drift equations. Their aim is to characterize interactions between individuals or signal perception  happening not only at a specific location, but over a whole set (usually a ball) containing (centered at) that location. In the context of cell populations, for instance, this seems to be a more realistic modeling assumption, as cells are able to extend various protrusions (such as lamellipodia, filopodia, cytonemes, etc.) into their surroundings, which can reach across long distances compared against cell size, see \cite{GM17,SSM17} and references therein. Moreover, the cells are able to relay signals they perceive and thus transmit them to cells with which they are not in direct contact, thereby influencing their motility, see e.g.,  \cite{eom,GaPa08}. Cell-cell and cell-tissue adhesion are essential for mutual communication, homeostasis, migration, proliferation, sorting, and many other biological processes. 
A large variety of models for adhesive behavior at the cellular level have been developed to account for the dynamics of focal contacts, e.g. \cite{Bell,BDB,ward} and to assess their influence on cytoskeleton restructuring and cell migration, e.g. \cite{DiMilla,DT93,Kuusela-Alt,Aydar}. 
Continuous, spatially nonlocal models involving adhesion were introduced more recently \cite{Armstrong2006} and are attracting increasing interest from the modeling \cite{BTCE,Butten,Carillo_etal19,DTGC14,gerisch2008,GePa10,15MuTo,PAS10}, analytical \cite{CLSW,dyson13,dyson10,SGAP09,WHP16}, and numerical \cite{Alf-num09} viewpoints. Yet more recent models \cite{DTGC17,ESuSt2016} also take into account subcellular level dynamics, thus involving further nonlocalities (besides adhesion), with respect to some structure variable referring to individual cell state. Thereby, multiscale mathematical settings are obtained, which lead to challenging problems for analysis and numerics. 
Another essential aspect of cell migration is the directional bias in response to a diffusing signal, commonly termed chemotaxis. A model of cell migration with finite sensing radius, thus featuring nonlocal chemotaxis has been introduced in \cite{othmer-hillen2} and readdressed in \cite{HilPSchm} from the perspective of well-posedness, long time behaviour, and patterning. We also refer to \cite{loy-preziosi} for further spatially nonlocal models and their formal deduction.

\noindent
For adhesion and nonlocal chemotaxis models, a gradient of some nondiffusing or diffusing signal is replaced by a nonlocal integral term. Here we are only interested in this type of model, and refer to \cite{CPSZ,eftimie,kav-suzuki} for reviews on settings involving other types of nonlocality. Specifically, following \cite{Armstrong2006,gerisch2008,HilPSchm,othmer-hillen2}, we consider the subsequent systems, whose precise mathematical formulations will be specified further below:
\begin{enumerate}
 \item a prototypical nonlocal model for adhesion 
 \begin{subequations}\label{nlHapto}
 \begin{align}
  &\partial_t c_{r}=\nabla\cdot\left(D_c(c_{r},v_{r})\nabla c_{r}-c_{r}\chi(c_{r},v_{r}) {\cal A}_{r}(g(c_r,v_r))\right)+f_c(c_{r},v_{r}),\label{nlHaptoc}\\
  &\partial_t v_{r}=f_v(c_{r},v_{r}),\label{nlHaptov}
 \end{align}
 \end{subequations}
 where 
 \begin{align}
  {\cal A}_{r}u(x):=\frac{1}{r} \avint_{B_r}u(x+\xi)\frac{\xi}{|\xi|}F_r(|\xi|)\,d\xi\label{DefAr}
 \end{align}
 is referred to as the adhesion velocity, and the function $F_r$ describes how the magnitude of the interaction force depends on the interaction range $|\xi|$ within the sensing radius $r$. We require this function to satisfy 
 \begin{Assumptions}[Assumptions on $F_r$]\label{Assf}~
 
 \begin{enumerate}[{\it (i)}]
  \item  $(r,\rho)\mapsto F_r(\rho)$ is continuous and positive in  $\left[0,r_0\right]^2$ for some $r_0>0$;
  \item
  $F_0(0)={ n+1}$.
  \footnote{In \cref{AverOper} we will see that this is, indeed, the 'right'  normalisation. If we assume, as in \cite{Armstrong2006}, that this function is a constant involving some viscosity related proportionality,  
  		then this choice provides the value of that constant.}
 \end{enumerate}
\end{Assumptions}
The quantity 
\begin{equation*}
\F(c_r,v_r)=c_r\chi (c_r,v_r){\cal A}_{r}(g(c_r,v_r))
\end{equation*}
is often referred to as the total adhesion flux, possibly scaled by some constant involving the typical cell size or the sensing radius, see e.g.,  \cite{Armstrong2006,Butten}. Here we also include a coefficient $\chi (c_r,v_r)$ that depends on cell and tissue (extracellular matrix, ECM) densities, 
which can be seen as characterizing the sensitivity of cells towards their neighbours and the surrounding tissue. It will, moreover, help provide in a rather general framework a unified presentation of 
this and the subsequent local and nonlocal model classes for adhesion, haptotactic, and chemotactic behavior of moving cells.

\noindent
System \cref{nlHapto} is a simplification of the integro-differential system (4) in \cite{gerisch2008}.
The main difference between the two settings is that in our case we ignore the so-called matrix-degrading enzymes (MDEs). Instead, we assume cells directly  degrade the tissue directly: this fairly standard
simplification (e.g., \cite{PAS10}) effectively assumes that proteolytic enzymes remain localised to the cells, and helps simplify the analysis. On the other hand, \cref{nlHapto} can also be viewed as 
a nonlocal version of the haptotaxis model with nonlinear diffusion: 
 \begin{subequations}\label{Hapto}
 \begin{align}
  &\partial_t c=\nabla\cdot\left(D_c(c,v)\nabla c-c\chi(c,v) \nabla g(c,v)\right)+f_c(c,v),\label{Haptoc}\\
  &\partial_t v=f_v(c,v);\label{Haptov}
 \end{align}
 \end{subequations}
 \item a  prototypical  nonlocal chemotaxis-growth model
 \begin{subequations}\label{nlChemo}
 \begin{align}
  \partial_t c_r=&\nabla\cdot\left(D_c(c_{r},v_{r})\nabla c_{r}-c_{r}\chi(c_{r},v_{r}) \mathring{\nabla}_{ r}v_{r}\right)+f_c(c_{r},v_{r}),\label{nlChemoc}\\
  \partial_t v_r=&D_v\Delta v_r+f_v(c_{r},v_{r})\label{nlChemov}
 \end{align}
\end{subequations}
with the nonlocal gradient  
\begin{align}
 \mathring{\nabla}_{r}u(x):=\frac{n}{r} \avint_{S_r}u(x+r\xi)\xi\,d\xi.\nonumber
\end{align}
System \cref{nlChemo} can be seen as  a nonlocal version of the chemotaxis-growth model
\begin{subequations}\label{Chemo}
 \begin{align}
  \partial_t c=&\nabla\cdot\left(D_c(c,v)\nabla c-c\chi(c,v) \nabla v\right)+f_c(c,v),\label{Chemoc}\\
  \partial_t v=&D_v\Delta v+f_v(c,v),\label{Chemov}
 \end{align}
\end{subequations}
where $\chi (c,v)$ is the chemotactic sensitivity function. As mentioned above, in order to have a unified description of our systems \cref{Hapto} and \cref{Chemo} and of their respective nonlocal counterparts \cref{nlHapto} and \cref{nlChemo}, we later introduce a more general version of the nonlocal chemotaxis flux, similar to the above adhesion velocity $\mathcal A_r$.  
\end{enumerate}
Here and below $B_r$ and $S_r$ denote the open $r$-ball and the $r$-sphere in $\R^n$, both centred at the origin, and  
\begin{align}
&\avint_{B_r}u(\xi)\,d\xi:=\frac{1}{|B_r|}\int_{B_r}u(\xi)\,d\xi,\nonumber\\
&\avint_{S_r}u(\xi)\,d\xi:=\frac{1}{|S_r|}\int_{S_r}u(\xi)\,d_{S_r}(\xi)\nonumber
\end{align}
are the usual mean values of a function $u$ over $B_r$ and $S_r$, respectively. The nonlocal systems \cref{Hapto} and \cref{Chemo} are stated for $$t>0,\qquad x\in\Omega\subset\R^n.$$ 
Unless the spatial domain $\Omega$ is the whole $\R^n${,}  suitable boundary conditions are required. In the latter case, usually periodicity is assumed, which is not biologically realistic in general. Still, this offers the easiest way to  properly define the output of the nonlocal operator in the boundary layer where the sensing region is not fully contained in $\Omega$. Very recently various other boundary conditions have been derived and compared in the context of a single equation modeling cell-cell adhesion in 1D \cite{HillButten2019}. 

\noindent
Few previous works focus on solvability for models with nonlocality in a taxis term. 
 Some of them deal with single equations that only involve cell-cell adhesion 
 \cite{dyson10,dyson13,HillButten2019}, others study nonlocal systems of the sort considered here 
 for two  \cite{HilPSchm} or  more components \cite{ESuSt2016}. The global solvability  and boundedness study in \cite{WHP16} is obtained for the case of a nonlocal operator with integration over a set of sampling directions being an open, not necessarily strict subset of $\R^{n}$. The systems studied there include settings with a third equation for the dynamics of diffusing MDEs. Conditions which secure uniform boundedness of solutions to such cell-cell and cell-tissue adhesion models in 1D were elaborated in \cite{SGAP09}.

\noindent
Some heuristic analysis via local Taylor expansions 
was performed in \cite{gerisch2008} and \cite{Hillen2007} showing that as $r\rightarrow0$ the outputs ${\cal A}_{r}u$ and $\mathring{\nabla}_{r}u$, respectively, converge pointwise to $\nabla u$ for a fixed and sufficiently smooth $u$. In \cite{HilPSchm} it was observed that it would be interesting to study rigorously the limiting behaviour of solutions of the nonlocal problems involving $\mathring{\nabla}_{r}u$. The authors ask in which sense, if at all, do these solutions converge to solutions of the corresponding local problem  as
$r\rightarrow0$.  Numerical results appeared to confirm that, in certain cases, the answer is positive. Still, to the best of our knowledge, no rigorous analytical study of this issue has as yet been performed. Clearly, any approach based on representations using Taylor polynomials requires a rather high order regularity  of solution components and a suitable control on the approximation errors, and that uniformly in $r$. This is difficult or even impossible  to obtain in most cases, particularly when dealing with weak solutions. In this work we propose a different approach based on the representation of the input $u$ in terms of an integral of $\nabla u$ over line segments. This leads to a new description of the nonlocal operators ${\cal A}_{r}$ and $\mathring{\nabla}_{r}$  in terms of nonlocal operators applied to gradients (see  \cref{AverOper} below). Moreover, it turns out that redefining their outputs inside the vanishing boundary layer in a suitable way allows one to perform a rigorous proof of convergence: Under suitable assumptions on the system coefficients and other parameters, appropriately defined sequences of solutions to nonlocal problems involving the mentioned modified nonlocal operators converge for $r\to 0$ to those of the corresponding local models  \cref{Hapto} and \cref{Chemo}, respectively. Our convergence proof is based on estimates on $c_r$ and $v_r$ which are uniform in $r$ and on a compactness argument.
The two models \cref{nlHapto} and \cref{nlChemo} are chosen as illustrations, however our idea can be further applied to other integro-differential systems with similar properties. 

\noindent
The rest of the paper is organised as follows.  \cref{not} introduces some basic 
notations to be used throughout this paper. In  \cref{AverOper} we introduce the aforementioned  adaptations of the nonlocal operators ${\cal A}_r$ and $\mathring{\nabla}_{r}$ and study their limiting properties as $r$ becomes infinitesimally small.  This turns out to be useful for our convergence proof later.  We also establish  in  \cref{modelEq} the well-posedness   
for a certain class of equations including such operators.  In the subsequent \textit{\cref{model}}  we introduce a couple of nonlocal models that involve the previously considered averaging operators, prove the global existence of solutions of the respective systems, and investigate their limit behaviour as $r\to 0$. 
\textit{\cref{sec:numerics}} provides some numerical simulations comparing various nonlocal and local models considered in this work in the 1D case. 
Finally, \textit{\cref{sec:discussions}} contains a discussion of the results and a short outlook on open issues.

\section{Basic notations and function spaces}\label{not}
We denote the Lebesgue measure of a set $A$ by $|A|$. Let $\Omega \subset \R^n$ be a bounded domain with smooth enough boundary.

\noindent
For a function $w:\Omega\rightarrow\R^n$ we assume, by convention, that 
\begin{align}
w:=0\qquad \text{in }\R^n\backslash\overline{\Omega}.\nonumber
\end{align}

\noindent
For  $r>0$ we introduce the following subdomain of $\Omega$ $$\Omega_r:=\{x\in\Omega\ :\  \dist(x,\partial\Omega)>r\}.$$

\noindent
Partial derivatives, in both classical and distributional sense, with respect to variables  $t$ and $x_i$, will be denoted respectively by $\partial_t$ 
and $\partial_{x_i}$. Further, $\nabla$, $\nabla\cdot$ and $\Delta$ stand for the spatial gradient, divergence and Laplace operators, respectively. $\partial_{\nu}$ is 
the derivative with respect to the outward unit normal of $\partial\Omega$. 

\noindent
We assume the reader to be familiar with the definitions and the usual properties of such spaces as: the standard Lebesgue and Sobolev spaces, spaces of functions with values in these spaces, and with anisotropic Sobolev spaces. In particular, we denote by $C_w([0,T];L^2(\Omega))$ the space of functions $u:[0,T]\rightarrow L^2(\Omega)$ which are continuous w.r.t. the weak topology of $L^2(\Omega)$.\\
Throughout the paper $\left<\cdot,\cdot\right>_{{ X^*,X}}$ denotes a duality paring between a space { $X$} and its dual { $X^*$.}

\noindent
Finally, we make the following useful convention: For all indices $i$, the quantity $C_i$ denotes a positive constant or, alternatively, a positive function of its arguments. Moreover, unless explicitly stated, these constants  {\bf do not} depend upon $r$.

\section{Operators  \texorpdfstring{${\cal A}_r$}{} and \texorpdfstring{$\mathring{\nabla}_{r}$}{} and averages of \texorpdfstring{$\nabla$}{}}\label{AverOper}
In this section we study the applications of the non-local operators ${\cal A}_r$ and $\mathring{\nabla}_{r}$ to  fixed, i.e. independent of $r$, functions $u$. Our focus is on the limiting behaviour as $r\rightarrow0$. Formal Taylor expansions performed in  \cite{gerisch2008,HilPSchm} anticipate that the limit is  the gradient operator  in both cases. This we prove here rigorously under rather mild regularity assumptions on $u$. To be more precise, we replace  ${\cal A}_r$ and $\mathring{\nabla}_{r}$  by certain  integral operators ${\cal T}_r$ and ${\cal S}_r$ (see \cref{IrTr} and \cref{JrSr} below) applied to $\nabla u$ and show that these operators are pointwise approximations of the identity operator in the $L^p$ spaces.   
 
\noindent
We start with the operator ${\cal A}_r$. For $r\in(0,r_0]$,  
$u\in C^1(\Omega)$, and $x\in \Omega_r$ we compute that
\begin{align}
 {\cal A}_ru(x)=&\frac{1}{r} \avint_{B_r}u(x+\xi)\frac{\xi}{|\xi|}F_r(|\xi|)\,d\xi\nonumber\\
 =&\frac{1}{r}\avint_{B_r}(u(x+\xi)-u(x))\frac{\xi}{|\xi|}F_r(|\xi|)\,d\xi\nonumber\\
 =&\frac{1}{r}\avint_{B_r}\int_0^1 (\nabla u(x+s\xi)\cdot\xi)\,ds\,\frac{\xi}{|\xi|}F_r(|\xi|)\,d\xi\nonumber\\
 =&\frac{1}{r}\int_0^1\avint_{B_r} (\nabla u(x+s\xi)\cdot\xi)\frac{\xi}{|\xi|}F_r(|\xi|)\,d\xi\,ds\nonumber\\
 =&\int_0^1\avint_{B_{1}} (\nabla u(x+rsy)\cdot y)\frac{y}{|y|}F_r(r|y|)\,dyds.\label{Ar}
\end{align}
 Formula \cref{Ar} extends to arbitrary  $u\in W^{1,1}(\Omega)$ by means of a  density argument. Motivated by \cref{Ar} we introduce the averaging operator
\begin{align}
{\cal T}_r w(x):=&\int_0^1\avint_{B_{1}} (w(x+rsy)\cdot y)\frac{y}{|y|}F_r(r|y|)\,dyds.\label{IrTr}
\end{align}
In {\cref{OperProp}} we check  that ${\cal T}_rw(x)$ is  well-defined for all $w\in (L^1(\Omega))^n$ and a.a. $x\in\Omega$. 
In this notation, for all $r\in(0,r_0]$ and $u\in W^{1,1}(\Omega)$ identity \cref{Ar} takes the  form
\begin{align}
 {\cal A}_ru={\cal T}_r (\nabla u)\qquad\text{a.e. in }\Omega_r.\nonumber
\end{align}
In the limiting case $r=0$ we have that 
\begin{align}
  {\cal T}_0 w(x)=&\int_0^1\avint_{B_{1}} (w(x)\cdot y)\frac{y}{|y|}F_0(0)\,dyds,\nonumber\\
 =&F_0(0)\sum_{i,j=1}^n w_i(x)e_j\avint_{B_{1}} \frac{y_iy_j}{|y|}\,dy\nonumber\\
 =&F_0(0)\sum_{i,j=1}^n w_i(x)e_j\delta_{ij}\avint_{B_{1}} \frac{y_i^2}{|y|}\,dy\nonumber\\
 =&F_0(0)\sum_{i=1}^n w_i(x)e_i\avint_{B_{1}} \frac{y_i^2}{|y|}\,dy\nonumber\\
 =&F_0(0)\sum_{i=1}^n w_i(x)e_i\frac{1}{n}\avint_{B_{1}} \frac{|y|^2}{|y|}\,dy\nonumber\\
  =&F_0(0)\frac{1}{n}\avint_{B_{1}} |y|\,dy\, w(x)\nonumber\\
  =&w(x).\label{I0w}
\end{align}
In the final step we used {\cref{Assf}{\it (ii)} which says that $F_0(0)={ n+1}$ (this explains our choice) and the 
 trivial identity
\begin{align} \label{intby}
 \avint_{B_{1}} |y|\,dy=\frac{ n}{n+1}.
\end{align}
Thus, we have just proved the following lemma:
\begin{Lemma}[Adhesion velocity vs. ${\cal T}_r$]\label{LemAdVel}
 Let $u\in W^{1,1}(\Omega)$. Then it holds that
 \begin{align}
 {\cal A}_ru={\cal T}_r (\nabla u)\qquad\text{a.e. in }\Omega_r\qquad\text{for  }r\in(0,r_0].\label{ArTr}
\end{align}
Moreover, if $F_0(0)=n+1$, then 
\begin{align}
 \nabla u={\cal T}_0 (\nabla u)\qquad\text{in }\Omega.\label{A0T0}
\end{align}
\end{Lemma}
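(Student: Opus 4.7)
The plan is to read off both identities from the computations that precede the lemma, then add the small regularity steps needed for a rigorous statement. For \cref{ArTr} I would first verify the equality for $u\in C^1(\overline{\Omega})$ via the chain of equalities leading to \cref{Ar}: the essential steps are (a) exploiting the odd symmetry of the kernel $\xi\mapsto \frac{\xi}{|\xi|}F_r(|\xi|)$ on $B_r$ to subtract the constant $u(x)$ from $u(x+\xi)$ without changing the integral; (b) writing $u(x+\xi)-u(x)=\int_0^1\nabla u(x+s\xi)\cdot\xi\,ds$ by the fundamental theorem of calculus; (c) applying Fubini to interchange the $s$- and $\xi$-integrals; and (d) rescaling via $\xi=ry$. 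The hypothesis $x\in\Omega_r$ is exactly what is needed to ensure $u(x+s\xi)$ lies inside $\Omega$ for all $s\in[0,1]$ and $\xi\in B_r$, so the extension-by-zero convention plays no role here.

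To pass from $C^1$ to $W^{1,1}(\Omega)$ I would approximate $u$ by $u_k\in C^\infty(\overline{\Omega})$ with $u_k\to u$ in $W^{1,1}(\Omega)$. Both sides of \cref{ArTr} are linear in $u$, and the maps $u\mapsto {\cal A}_r u$ and $u\mapsto {\cal T}_r(\nabla u)$ are continuous from $W^{1,1}(\Omega)$ into $L^1(\Omega_r)$; for the first this follows directly from Fubini applied to the defining integral, and for the second it is a consequence of the $L^1$-boundedness of ${\cal T}_r$ to be established in \cref{OperProp}. Passing to the limit along a subsequence then yields identity \cref{ArTr} a.e. in $\Omega_r$.

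For the second identity I would substitute $r=0$ directly into the defining formula \cref{IrTr}. Since $w(x)$ is independent of the integration variables $s$ and $y$, the expression collapses to $F_0(0)\sum_{i,j=1}^n w_i(x)e_j\,\avint_{B_1}\frac{y_iy_j}{|y|}\,dy$. The sign symmetry $y\mapsto -y$ kills the off-diagonal terms, while coordinate permutation symmetry of $B_1$ makes all diagonal terms equal to $\frac{1}{n}\avint_{B_1}|y|\,dy$; the elementary identity $\avint_{B_1}|y|\,dy=\frac{n}{n+1}$ together with the normalisation $F_0(0)=n+1$ supplied by \cref{Assf}{\it (ii)} combine to give ${\cal T}_0 w=w$ pointwise. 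Specialising to $w=\nabla u$ then gives the required statement.

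The only real subtlety is the density step, which relies on the $L^1$-continuity of ${\cal T}_r$ that is postponed to \cref{OperProp}; once that is in hand, everything else is bookkeeping of the symmetries of $B_1$ together with one explicit integral over the unit ball.
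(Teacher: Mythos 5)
Your proposal is correct and follows essentially the same route as the paper: the chain of identities \cref{Ar} (odd-symmetry of the kernel, fundamental theorem of calculus, Fubini, rescaling $\xi=ry$) for $C^1$ functions, extension to $W^{1,1}(\Omega)$ by density, and the symmetry computation \cref{I0w} with $\avint_{B_1}|y|\,dy=\frac{n}{n+1}$ and $F_0(0)=n+1$ for the case $r=0$. In fact you spell out the density step (linearity plus $W^{1,1}\to L^1(\Omega_r)$ continuity of both sides, the latter via the $L^1$-boundedness of ${\cal T}_r$ from \cref{OperProp}) in more detail than the paper, which merely invokes ``a density argument''.
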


\noindent
In a very similar manner one can establish a representation for  $\mathring{\nabla}_{r}$. For this purpose we define the averaging operator
\begin{align}
{\cal S}_r w(x):=&n\int_0^1\avint_{S_{1}} (w(x+rsy)\cdot y)y\,d_{S_1}(y)ds\qquad\text{for }r\in(0,r_0].\label{JrSr}
\end{align}
The corresponding result then reads:
\begin{Lemma}[Non-local gradient vs. ${\cal S}_r$]\label{LemNGR}
  Let $u\in W^{1,1}(\Omega)$. Then  it holds that 
 \begin{align}
 \mathring{\nabla}_{r} u=&{\cal S}_r (\nabla u)\qquad\text{a.e. in }\Omega_r\qquad\text{for  }r\in(0,r_0],\label{DrSr}\\
 \nabla u=&{\cal S}_0 (\nabla u)\qquad\text{a.e. in }\Omega.\label{D0S0}
\end{align}
\end{Lemma}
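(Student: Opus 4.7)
The plan is to follow the template already established for \cref{LemAdVel}, with the solid ball $B_r$ replaced by the sphere $S_r$ and with spherical rather than radial symmetry doing the work.

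First, for $u\in C^1(\Omega)$, fixed $r\in(0,r_0]$, and $x\in\Omega_r$, I would begin by exploiting the symmetry identity $\avint_{S_1}y\,d_{S_1}(y)=0$ to subtract the constant $u(x)$ from the integrand defining $\mathring{\nabla}_r u(x)$ without affecting its value. Writing $u(x+r\xi)-u(x)=\int_0^1\nabla u(x+sr\xi)\cdot r\xi\,ds$ by the fundamental theorem of calculus along the segment from $x$ to $x+r\xi$, interchanging the order of integration via Fubini, and noting that one factor of $r$ cancels the $1/r$ prefactor, one obtains
\begin{align*}
\mathring{\nabla}_r u(x)
=n\int_0^1\avint_{S_1}(\nabla u(x+sr\xi)\cdot\xi)\xi\,d_{S_1}(\xi)\,ds
={\cal S}_r(\nabla u)(x).
\end{align*}
This establishes \cref{DrSr} for $u\in C^1(\Omega)$. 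A density argument, relying on the well-definedness and $(L^1(\Omega))^n$-boundedness of ${\cal S}_r$ (which will be verified in \cref{OperProp} by the same reasoning used there for ${\cal T}_r$) together with the obvious $L^1$-continuity of $\mathring{\nabla}_r$, then extends the identity to all of $W^{1,1}(\Omega)$.

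For \cref{D0S0}, I would compute ${\cal S}_0(\nabla u)(x)$ directly from the definition of ${\cal S}_r$ at $r=0$. The integrand becomes independent of $s$, leaving $n\avint_{S_1}(\nabla u(x)\cdot y)y\,d_{S_1}(y)$. Expanding componentwise and invoking the standard symmetry identity $\avint_{S_1}y_iy_j\,d_{S_1}(y)=\tfrac{1}{n}\delta_{ij}$ (the off-diagonal terms vanish under the reflection $y_i\mapsto -y_i$, and the diagonal value follows from $\sum_{i=1}^n\avint_{S_1}y_i^2\,d_{S_1}(y)=\avint_{S_1}|y|^2\,d_{S_1}(y)=1$ combined with coordinate symmetry), one immediately obtains ${\cal S}_0(\nabla u)(x)=\nabla u(x)$.

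I do not anticipate any serious obstacle; the argument is essentially a direct transposition of the proof of \cref{LemAdVel}. The only point worth highlighting is that, in contrast to the ball case where the normalisation $F_0(0)=n+1$ paired with $\avint_{B_1}|y|\,dy=n/(n+1)$ produced the correct cancellation, here the prefactor $n$ in the definition of $\mathring{\nabla}_r$ is precisely what matches the complementary identity $\avint_{S_1}y_i^2\,d_{S_1}(y)=1/n$, so that no external normalisation hypothesis is required.
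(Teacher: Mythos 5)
Your proposal is correct and coincides with the paper's intended argument: the paper omits the proof of \cref{LemNGR} precisely because it is the direct transposition of the proof of \cref{LemAdVel} that you carry out (subtract $u(x)$ via the vanishing first moment over $S_1$, apply the fundamental theorem of calculus along segments, use Fubini and a density argument, and for $r=0$ the identity $\avint_{S_1}y_iy_j\,d_{S_1}(y)=\tfrac{1}{n}\delta_{ij}$). Your closing remark is also accurate: the prefactor $n$ in $\mathring{\nabla}_r$ plays the role that the normalisation $F_0(0)=n+1$ plays in the ball case, so no analogue of \cref{Assf}\textit{(ii)} is needed here.
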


\noindent
The proof of {\cref{LemNGR}} is very similar to that of {\cref{LemAdVel}} and we omit it here.  

\noindent
Next, we observe  that  identity \cref{ArTr} was established for $\Omega_r$.  In the  boundary 
layer $\Omega\backslash\Omega_{r}$ the definition \cref{DefAr} of the adhesion velocity allows various extensions. For example, one could keep \cref{DefAr} by assuming (as done, e.g., in  \cite{ESuSt2016}) that $u:=0$ in $\R^n\backslash\Omega$. An alternative would be to average over the part of the $r$-ball  that lies inside the domain. Let us have a closer look at the first option (the second can be handled similarly). Consider the following example:
\begin{Example}[${\cal A}_r $ vs. ${\cal T}_r(\nabla \cdot)$ in 1D]\label{BndL}
 Let $\Omega=(-1,1)$, $r_0=1$,  $F_r\equiv 2$, and $u\equiv 1$. In this case, $u'\equiv 0$, hence $${\cal T}_{r}(u')\equiv0 \equiv u'.$$ For ${\cal A}_r$ one readily computes by assuming  $u=0$ in $\R\backslash(-1,1)$ that  for $x\in(-1,1)$
 \begin{align}
  {\cal A}_ru(x)=&\frac{2}{r}\frac{1}{2r}\int_{(-1-x,1-x)\cap(-r,r)}\sign(\xi)\,d\xi\nonumber\\
  =&
  \begin{cases}
    \frac{1}{r^2}(-1+r-x)&\text{ in }[-1,-1+r],\\
     0&\text{ in }(-1+r,1-r)=\Omega_r,\\
      \frac{1}{r^2}(1-r-x)&\text{ in }[1-r, 1],
 \end{cases}
\nonumber
 \end{align}
so that
\begin{align}
 \|{\cal A}_ru\|_{L^1(-1,1)}
=
 \|{\cal A}_ru\|_{L^1(\Omega\backslash\Omega_r)}=&\frac{1}{r^2}\int_{-1}^{-1+r}\left|-1+r-x\right|\,dx+\frac{1}{r^2}\int_{1-r}^1\left|1-r-x\right|\,dx=1,\nonumber
\end{align}
although
\begin{align}
 |\Omega\backslash\Omega_r|=2r\underset{r\rightarrow0}{\rightarrow}0.\nonumber
\end{align}
Thus, $${\cal A}_ru\underset{r\rightarrow0}{\rightarrow}0\equiv u'$$ in the measure but not in $L^1(\Omega)$. 
  
\end{Example}

\noindent
{\cref{BndL}} supports our idea to average $\nabla u$ instead of $u$ itself. The same applies to $\mathring{\nabla}_{r} u$ vs. ${\cal S}_r (\nabla u)$. 

\noindent
Averaging w.r.t. $y\in B_1$ and then also w.r.t. $s\in(0,1)$ might appear superfluous in the definition of  the operator ${\cal T}_r$. The following example compares the effect of ${\cal T}_r$ with that of an operator which averages w.r.t. to $y$ only. 
\begin{Example}\label{Ex2}
Let $\Omega=\R^n$, $n\geq2$, and $r>0$, $F_r\equiv n+1$.  In this case
\begin{align}
{\cal T}_r w(x):=(n+1)\int_0^1\avint_{B_{1}} (w(x+rsy)\cdot y)\frac{y}{|y|}\,dyds.\nonumber
\end{align}
 Consider also the operator 
 \begin{align}
{\widetilde{{\cal T}}}_r w(x):=(n+1)\avint_{B_{1}} (w(x+ry)\cdot y)\frac{y}{|y|}\,dy.\nonumber
\end{align}
It is easy to see that  both operators are well-defined, linear,  continuous,  and  self-adjoint in the space $L^2(\R^n)$. Moreover, they map the dense subspace $C_0(\R^n;\R^n)$ into itself. This suggests the following natural extension to $(C_0(\R^n;\R^n))^*$:
\begin{align}
 \left<{\cal T}_r\mu,\varphi\right>_{(C_0(\R^n;\R^n))^*,C_0(\R^n;\R^n)}:=&\left<\mu,{\cal T}_r\varphi\right>_{(C_0(\R^n;\R^n))^*,C_0(\R^n;\R^n)},\nonumber\\
 \left<\widetilde{{\cal T}}_r\mu,\varphi\right>_{(C_0(\R^n;\R^n))^*,C_0(\R^n;\R^n)}:=&\left<\mu,\widetilde{{\cal T}}_r\varphi\right>_{(C_0(\R^n;\R^n))^*,C_0(\R^n;\R^n)}.\nonumber
\end{align}
Let, for instance, $$w:=\delta_0e_1,$$ $\delta_0$ and $e_1$ mean the usual Dirac delta and the vector $(1,0,\dots,0)$, respectively. One readily computes that
\begin{align}
 {\widetilde{{\cal T}}}_r(\delta_0 e_1) (x)=\frac{n+1}{|B_{r}|}\chi_{B_{r}}(x)\frac{x_1}{r}\frac{x}{|x|},\nonumber
\end{align}
whereas 
\begin{align}
 {\cal T}_r(\delta_0 e_1)(x) =&\frac{n+1}{|B_{r}|}\int_0^1s^{-n-1}\chi_{B_{rs}}(x)\,ds\frac{x_1}{r}\frac{x}{|x|}\nonumber\\
 =&\frac{n+1}{n|B_{r}|}\left(\left(\frac{r}{|x|}\right)^n-1\right)_+\frac{x_1}{r}\frac{x}{|x|}.\nonumber
\end{align}
For $n\geq2$, the operator ${\cal T}_r$ retains the  singularity at the origin, however making it less concentrated, while $\widetilde{{\cal T}}_r$ eliminates that singularity entirely and produces instead jump discontinuities all over $S_r$.

\end{Example}

 \subsection{Properties of the averaging operators \texorpdfstring{${\cal T}_r$ and ${\cal S}_r$}{}}\label{OperProp}
\noindent 
In this section we collect some  properties of the averaging operators ${\cal T}_r$ and ${\cal S}_r$.
 \begin{Lemma}[Properties of ${\cal T}_r$]\label{TWellDef}
 Let $F_r$ satisfy  {\cref{Assf}} and let $r\in(0,r_0]$. Then:
  \begin{enumerate}[{\it (i)}]
   \item\label{PropTi} ${\cal T}_r$ is a well-defined continuous  linear operator in  $\Lpn$ for all $p\in[1,\infty]$. The corresponding operator 
   norm satisfies
  \begin{align}
  \|{\cal T}_r\|_{L(\Lpn)}\leq \Cr{CP2}(r,p),\label{Tbnd}
 \end{align}
   where 
 \begin{align}
  \Cl{CP2}(r,p):=
  \begin{cases}
\left({ n}\int \limits _0^1\rho^{n-1+p^*}(F_r(r{ \rho }))^{p^*}\,d\rho\right)^{\frac{1}{p^*}}&\text{for }p\in(1,\infty],\ \ \frac{1}{p}+\frac{1}{p^*}=1,\\
\underset{\rho\in [0,1]}{\max}\,\rho F_r(r\rho)&\text{for }p=1.
\end{cases}\nonumber
 \end{align}
   \item\label{PropTii} Let $p,p^*\in[1,\infty]$ be such that $\frac{1}{p}+\frac{1}{p^*}=1$. For all $w_1\in \left(\Lp\right)^n$ and 
   $w_2\in \left(L^{p^*}(\Omega)\right)^n$ it holds: 
\begin{align}
\int_{\Omega}({\cal T}_rw_1(x)\cdot w_2(x))\,dx=\int_{\Omega}(w_1(x)\cdot{\cal T}_rw_2(x))\,dx.\label{sadj}
\end{align}
\item\label{PropTiii} Let $p\in[1,\infty)$. For all $w\in \Lpn$ it holds that
 \begin{align}
  {\cal T}_r w\underset{r\rightarrow0}{\rightarrow}{\cal T}_0w=w\qquad \text{in }\Lpn.\label{convT}
 \end{align}
\item\label{PropTiv} For $p =2$ it holds that
\begin{align}
\| {\cal T}_r\|_{L((L^2(\Omega))^n)} %
\underset{r\rightarrow0}{\rightarrow} 1.
\end{align}

  \end{enumerate}
 \end{Lemma}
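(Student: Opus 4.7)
The plan is to prove \emph{(i)--(iv)} in order. For the norm bound in \emph{(i)} I would first dominate
\begin{align*}
|{\cal T}_r w(x)| \leq \int_0^1\avint_{B_1}|w(x+rsy)|\,|y|\,F_r(r|y|)\,dy\,ds,
\end{align*}
apply H\"older's inequality on $(B_1, dy/|B_1|)$ with exponents $p,p^*$, and note that, upon passing to polar coordinates,
\begin{align*}
\left(\avint_{B_1}\bigl(|y|F_r(r|y|)\bigr)^{p^*}dy\right)^{1/p^*} = \Cr{CP2}(r,p).
\end{align*}
Jensen's inequality in $s$, Fubini, and translation invariance on $\R^n$ (using the convention $w\equiv 0$ off $\Omega$) then give $\|{\cal T}_r w\|_{L^p} \leq \Cr{CP2}(r,p)\,\|w\|_{L^p}$; the case $p=1$ is handled by pulling out $\max_{\rho\in[0,1]}\rho F_r(r\rho)$ from the start. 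For \emph{(ii)} I would substitute $x':=x+rsy$ in the inner integral of $\int_\Omega({\cal T}_r w_1\cdot w_2)\,dx$ and then change $y\mapsto -y$; the two sign changes from $w_1(x)\cdot y$ and from $y/|y|$ cancel, and Fubini produces $\int_\Omega(w_1\cdot {\cal T}_r w_2)\,dx$.

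For \emph{(iii)} I would combine the uniform bound from \emph{(i)} with a density argument. By continuity and positivity of $F_r$ on $[0,r_0]^2$, $\Cr{CP2}(r,p)$ stays bounded uniformly in $r\in[0,r_0]$; hence $\{{\cal T}_r\}$ is a uniformly bounded family on $(L^p(\Omega))^n$. For $w\in C^\infty_c(\Omega;\R^n)$ the integrand defining ${\cal T}_r w(x)$ is uniformly dominated and converges pointwise, as $r\to 0$, to $(w(x)\cdot y)(y/|y|)(n+1)$; dominated convergence together with \cref{I0w} yields ${\cal T}_r w\to {\cal T}_0 w = w$ pointwise and, thanks to compact support, also in $L^p$. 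A standard $3\varepsilon$-argument using density of $C^\infty_c(\Omega;\R^n)$ in $(L^p(\Omega))^n$ then extends the convergence to arbitrary $w\in(L^p(\Omega))^n$.

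Part \emph{(iv)} is the main obstacle, since the bound in \emph{(i)} is not sharp: at $r=0$ it gives $(n+1)(n/(n+2))^{1/2}>1$ for $p=2$. The lower bound $\liminf_{r\to 0}\|{\cal T}_r\|\geq 1$ follows immediately from \emph{(iii)} by testing on any $w\in C^\infty_c(\Omega;\R^n)$ with $\|w\|_2=1$. For the matching upper bound I would set $g_r(\rho):=F_r(r\rho)-(n+1)$, so that $\sup_{\rho\in[0,1]}|g_r(\rho)|\to 0$ by uniform continuity of $F$, and split ${\cal T}_r={\cal T}_r^{(1)}+{\cal T}_r^{(2)}$, with ${\cal T}_r^{(1)}$ carrying the constant weight $n+1$ and ${\cal T}_r^{(2)}$ carrying $g_r(|y|)$. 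The norm of ${\cal T}_r^{(2)}$ is controlled by the $p=2$ analogue of $\Cr{CP2}(r,2)$ with $g_r$ in place of $F_r(r\cdot)$, which vanishes as $r\to 0$. For ${\cal T}_r^{(1)}$ I would extend $w$ by zero to $\R^n$ and pass to the Fourier side: ${\cal T}_r^{(1)}$ becomes a Fourier multiplier with Hermitian matrix symbol
\begin{align*}
M_r^{(1)}(\xi) = (n+1)\int_0^1\avint_{B_1} e^{irsy\cdot\xi}\,\frac{y\otimes y}{|y|}\,dy\,ds,
\end{align*}
and for every unit vector $v$ one has, using $|\cos|\leq 1$, rotational invariance, and \cref{intby},
\begin{align*}
|v\cdot M_r^{(1)}(\xi)v| \leq (n+1)\avint_{B_1}\frac{(v\cdot y)^2}{|y|}\,dy = \frac{n+1}{n}\avint_{B_1}|y|\,dy = 1.
\end{align*}
Hence $\|{\cal T}_r^{(1)}\|_{L(L^2(\R^n))}\leq 1$, and restriction to $\Omega$ cannot increase this norm. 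Combining the two estimates yields $\limsup_{r\to 0}\|{\cal T}_r\|\leq 1$, which together with the lower bound proves \emph{(iv)}.
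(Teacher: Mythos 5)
Your proposal is correct, and for parts \emph{(i)}--\emph{(iii)} it follows essentially the paper's own route: H\"older plus Fubini plus the extension-by-zero convention for the norm bound (your polar-coordinate identity $\avint_{B_1}\bigl(|y|F_r(r|y|)\bigr)^{p^*}dy=n\int_0^1\rho^{n-1+p^*}F_r(r\rho)^{p^*}d\rho$ is exactly how the constant arises), the substitution $z=x+rsy$ followed by $y\mapsto-y$ with the cancelling signs for \emph{(ii)}, and uniform boundedness plus density for \emph{(iii)} (the paper invokes Banach--Steinhaus; your $3\varepsilon$-argument is the same mechanism). For \emph{(iv)} you share the paper's key tool --- extend by zero, pass to the Fourier side, and bound the operator norm by the $L^\infty$-norm of the matrix symbol via Plancherel --- but the finishing step differs. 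The paper uses the covariance $\Phi_r(O\xi)=O\Phi_r(\xi)O^T$ to reduce to $\xi=|\xi|e_1$, where the symbol is diagonal, and bounds its spectral radius by $\frac{1}{n}\avint_{B_1}|y|F_r(r|y|)\,dy\rightarrow 1$, invoking $F_0(0)=n+1$ and \cref{intby}. You instead split off the deviation $g_r=F_r(r\cdot)-(n+1)$, whose contribution vanishes by the part-\emph{(i)} bound and uniform continuity of $F$ on $[0,r_0]^2$, and estimate the numerical range of the constant-weight symbol; this uses $|M|_2=\sup_{|v|=1}|v\cdot Mv|$, which is legitimate only because the symbol is Hermitian (indeed real symmetric: the imaginary part is odd under $y\mapsto-y$ and integrates to zero), so your explicit Hermitian-ness assertion is an essential, not cosmetic, ingredient. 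Two remarks on the comparison: your splitting is actually avoidable, since the same quadratic-form estimate applied directly with the radial weight gives $|v\cdot\Phi_r(\xi)v|\leq\frac{1}{n}\avint_{B_1}|y|F_r(r|y|)\,dy$, recovering the paper's bound in one step; conversely, your decomposition cleanly isolates where the normalisation \cref{Assf}\,\textit{(ii)} enters, and your preliminary observation that the part-\emph{(i)} constant at $p=2$ limits to $(n+1)\sqrt{n/(n+2)}>1$ correctly explains why \emph{(iv)} needs a genuinely sharper argument than \emph{(i)}. Your lower bound $\liminf_{r\to0}\|{\cal T}_r\|\geq1$ by testing on a fixed unit-norm $w$ is also slightly more economical than the paper's appeal to Banach--Steinhaus, which is not actually needed there.
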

 \begin{Remark}
 Due to the assumptions on $F_r$ we have in the limit that
  \begin{align}
  \Cr{CP2}(r,p)\underset{r\rightarrow0}{\rightarrow}\Cl{CP2_}(p):=  \begin{cases}
{ (n+1)\left(\frac{n}{n+p^*}\right)^{\frac{1}{p^*}}}&\text{for }p\in(1,\infty]\backslash\{2\},\ \ \frac{1}{p}+\frac{1}{p^*}=1,\\
{ n+1}&\text{for }p=1.
\end{cases}\label{C2p}
 \end{align}
 \end{Remark}
\begin{proof}{(of {\cref{TWellDef}})}
\begin{enumerate}[{\it (i)}]
\item Since $w$ is measurable and $\rho\mapsto F_r(\rho)$, $(x,s,y)\mapsto x+rsy$, $(y,z)\mapsto(z\cdot y)\frac{y}{|y|}$ are 
continuous, we have that
\begin{align}
 (x,y,s)\mapsto (w(x+rsy)\cdot y)\frac{y}{|y|}F_r(r|y|)\nonumber
\end{align}
is well-defined a.e. in $\Omega\times B_1\times(0,1)$ and is measurable.  Let $p\in{(1},\infty)$ and $\frac{1}{p}+\frac{1}{p^*}=1$. 
 Using  H\"older's inequality, Fubini's theorem, and our convention that $w$ vanishes outside $\Omega$, we deduce for all $w\in \Lpn$ that
 \begin{align}
  \|{\cal T}_rw\|_{\Lpn}^p=&\int_{\Omega}\left|\int_0^1\avint_{B_{1}} (w(x+rsy)\cdot y)\frac{y}{|y|}F_r(r|y|)\,dyds\right|^p\,dx\nonumber\\
  \leq &\int_{\Omega}\int_0^1\avint_{B_{1}}|w(x+rsy)|^p\,dy\left(\avint_{B_{1}}\left(|y|F_r(r|y|)\right)^{p^*}\,dy\right)^{\frac{p}{p^*}}dsdx\nonumber\\
  =&\Cr{CP2}^p(r,p)\int_0^1\avint_{B_{1}}\int_{\Omega}|w(x+rsy)|^p\,dxdyds,\nonumber\\
  \leq&\Cr{CP2}^p(r,p)\int_0^1\avint_{B_{1}}\int_{\Omega}|w(z)|^p\,dzdyds\nonumber\\
  =&\Cr{CP2}^p(r,p)\|w\|_{\Lpn}^p.\nonumber
 \end{align}
 This implies that for all $p\in(1,\infty)$ operator ${\cal T}_r$ is well-defined in $\Lpn$ and satisfies  \cref{Tbnd}. It is also clearly linear. Taken together we then have that ${\cal T}_r\in L(\Lpn)$ and \cref{Tbnd} holds. 
 The cases $p=1$ and $p=\infty$ can be treated similarly. 
  
\item Let $w_1\in \left(\Lp\right)^n$ and $w_2\in \left(L^{p^*}(\Omega)\right)^n$. We compute by using  Fubini's theorem, the symmetry of $B_1$, and simple variable transformations that
 \begin{align}
 &\int_{\Omega}({\cal T}_rw_1(x)\cdot w_2(x))\,dx\nonumber\\=&\int_{\Omega}\int_0^1\avint_{B_{1}} (w_1(x+rsy)\cdot y)\frac{y}{|y|}F_r(r|y|)\,dyds\cdot w_2(x)\,dx\nonumber\\
 =&\int_0^1\avint_{B_{1}}|y|F_r(r|y|)\int_{\Omega} \left(w_1(x+rsy)\cdot\frac{y}{|y|}\right)\left(w_2(x)\cdot\frac{y}{|y|}\right)\,dx\,dy\,ds\nonumber\\
 =&\int_0^1\avint_{B_{1}}|y|F_r(r|y|)\int_{\Omega\cap(-rsy+\Omega)} \left(w_1(x+rsy)\cdot\frac{y}{|y|}\right)\left(w_2(x)\cdot\frac{y}{|y|}\right)\,dx\,dy\,ds\label{sa1}\\
  =&\int_0^1\avint_{B_{1}}|y|F_r(r|y|)\int_{(rsy+\Omega)\cap\Omega} \left(w_1(z)\cdot\frac{y}{|y|}\right)\left(w_2(z-rsy)\cdot\frac{y}{|y|}\right)\,dzdyds\nonumber\\
  =&\int_0^1\avint_{B_{1}}|y|F_r(r|y|)\int_{(-rsy+\Omega)\cap\Omega} \left(w_1(z)\cdot\frac{y}{|y|}\right)\left(w_2(z+rsy)\cdot\frac{y}{|y|}\right)\,dzdyds.\label{sa2}
\end{align}
Thereby we used our convention that each function defined in $\Omega$ is assumed to be prolonged by zero outside $\Omega$. 
Comparing \cref{sa1} and \cref{sa2} we obtain \cref{sadj}.
\item We apply the  Banach-Steinhaus theorem. Due to {\it\cref{PropTi}} and \cref{C2p},  $\{{\cal T}_r\}_{r\in(0,r_0]}$ is a family of  uniformly bounded linear operators in the Banach space $\Lpn$. Thus, as $C_c(\overline{\Omega};\R^n)$ is dense in $\Lpn$ for $p<\infty$, we only need to check \cref{convT} for $w\in C_c(\overline{\Omega};\R^n)$. But for such $w$ we can directly pass to the limit under the integral and thus obtain using \cref{I0w} { and the dominated convergence theorem} that
 \begin{align} 
  {\cal T}_r w\underset{r\rightarrow0}{\rightarrow} {\cal T}_0w=w\qquad\text{for all }x\in\Omega{ \text{ and in }\Lpn}.\nonumber
 \end{align}
\item Here we make use of the  Fourier transform, which we denote by the hat symbol. A straightforward calculation shows that
\begin{align}
{\widehat{{\cal T}_r w}}= \Phi_r \widehat{w},\nonumber                                                                \end{align}
where 
\begin{align}
\Phi_r (\xi) := \int_0^1 \avint_{B_1} \frac{yy^T}{|y|} F_r(r|y|)e^{irsy \cdot \xi} \, dy ds.\label{FT}
\end{align}
Combining \cref{FT} with the Plancherel theorem and using our convention that $w$ vanishes outside $\Omega$, we can estimate as follows:
\begin{align}
 \|{\cal T}_r\|_{L((L^2(\Omega))^n)}
 =& \sup_{\|w\|_{(L^2(\Omega))^n} = 1} \|{\cal T}_r w\|_{(L^2(\Omega))^n}\nonumber\\ 
 \leq& \sup_{\|w\|_{(L^2(\Omega))^n} = 1} \|\widehat{{\cal T}_r w}\|_{(L^2(\R^n))^n}\nonumber\\
 \leq& \| {|}\Phi_r{|}_2 \|_{L^{\infty}(\R^n)}\sup_{\|w\|_{(L^2(\Omega))^n} = 1} \|\widehat{w}\|_{(L^2(\R^n))^n}\nonumber\\
 =& \| {|}\Phi_r{|}_2 \|_{L^{\infty}(\R^n)}\sup_{\|w\|_{(L^2(\Omega))^n} = 1} \|w\|_{(L^2(\Omega))^n}\nonumber\\
 = &\| {|}\Phi_r{|}_2 \|_{L^{\infty}(\R^n)}.\label{estTr2}
\end{align}
Here $|M|_2$ denotes
the spectral norm of a matrix $M\in\R^{n\times n}$.
Further, observe that
\begin{align}
\Phi_r(O \xi) = O \Phi_r (\xi) O^T \qquad\text{ for all orthogonal }O \in \R^{n\times n} \text{ and } \xi \in \R^n.                                                                                                               \end{align}
 Consequently, denoting by $e_1$ the first canonical vector of $\R^n$  and appropriately constructing an ortho\-go\-nal matrix $O$ in order for $O\xi =|\xi|e_1$ to hold, we obtain that
\begin{align} \label{frub}
{|} \Phi_r(\xi) {|}_2 = {|}\Phi_r(|\xi|e_1){|}_2\qquad\text{for all }\xi\in\R^n.
\end{align}
Since 
\begin{align}
  \Phi_r(|\xi|e_1)=&\int_0^1 \avint_{B_1} \frac{yy^T}{|y|} F_r(r|y|)e^{irs|\xi|y_1} \, dy ds
  \label{Phi1}
\end{align}
is a  diagonal matrix, its spectral norm is given by the spectral radius. Estimating the right-hand side of \cref{Phi1} we then conclude that
\begin{align}
 {|}\Phi_r(|\xi|e_1){|}_2\leq {\frac{1}{n}} \avint_{B_1}|y| F_r(r|y|)\, dy\underset{r\rightarrow0}{\rightarrow} 1\qquad\text{for all }\xi\in\R^n\label{ctrub}
\end{align}
due to $F_0(0) = n+1$ and \eqref{intby}.
Combining \cref{estTr2,ctrub,frub} 
we arrive at
\begin{align}
 \underset{r\rightarrow0}{\lim\sup}\, \|{\cal T}_r\|_{L((L^2(\Omega))^n)}\leq 1.
\end{align}
Finally, the pointwise convergence \cref{convT} and  the Banach-Steinhaus theorem imply that
\begin{align}
 \underset{r\rightarrow0}{\lim\inf}\, \|{\cal T}_r\|_{L((L^2(\Omega))^n)}\geq 1,\nonumber
\end{align}
concluding the proof.

\end{enumerate}
\end{proof}

\noindent
A similar result holds for ${\cal S}_r$:
\begin{Lemma}[Operator ${\cal S}_r$] \label{SWellDef}
 Let $r\in[0,r_0]$. Then:
  \begin{enumerate}[{\it (i)}]
   \item\label{PropSi} ${\cal S}_r$  is a well-defined continuous  linear operator in  $\Lpn$ for all $p\in[1,\infty]$. The corresponding operator 
   norm satisfies
  \begin{align}
  \|{\cal S}_r\|_{L(\Lpn)}\leq n.\label{Snorm}
 \end{align}
   \item\label{PropSii} Let $p,p^*\in[1,\infty]$ be such that $\frac{1}{p}+\frac{1}{p^*}=1$. For all $w_1\in \left(\Lp\right)^n$ and 
   $w_2\in \left(L^{p^*}(\Omega)\right)^n$ it holds:  
\begin{align}
 \int_{\Omega}({\cal S}_rw_1(x)\cdot w_2(x))\,dx=\int_{\Omega}(w_1(x)\cdot{\cal S}_rw_2(x) )\,dx.\nonumber
\end{align}
\item\label{PropSiii} Let $p\in[1,\infty)$. For all $w\in 
(L^p(\Omega))^n $ it holds that
 \begin{align}
  {\cal S}_r w\underset{r\rightarrow0}{\rightarrow} {\cal S}_0w=w\qquad \text{in }\Lpn.\nonumber
 \end{align}
 \item \label{PropSiv} For $p =2$ it holds that 
 \begin{align*}
\| {\cal S}_r\|_{L((L^2(\Omega))^n)} \underset{r\rightarrow0}{\rightarrow}  1.
\end{align*}
  \end{enumerate}
 \end{Lemma}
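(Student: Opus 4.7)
The plan is to mirror the proof of \cref{TWellDef}, with the ball $B_1$ replaced by the sphere $S_1$ throughout. The crucial simplification is that $|y|=1$ on $S_1$, so the somewhat delicate constant $\Cr{CP2}(r,p)$ collapses to a uniform factor of $n$, and the assumption \cref{Assf} on $F_r$ plays no role.

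For part \emph{\cref{PropSi}}, I would bound $|\mathcal{S}_r w(x)|\leq n\int_0^1\avint_{S_1}|w(x+rsy)|\,d_{S_1}(y)\,ds$ using $|y|=1$ and Cauchy--Schwarz, then apply Jensen's (or Hölder's) inequality and Fubini exactly as in the proof of \cref{TWellDef}{\it \cref{PropTi}}; a translation-invariance step (together with the convention $w\equiv 0$ outside $\Omega$) yields \cref{Snorm}. Linearity is obvious, and the cases $p=1,\infty$ are treated in the usual way. Part \emph{\cref{PropSii}} follows from the identical Fubini-plus-substitution argument that produced \cref{sadj}: the change of variables $z=x+rsy$ together with the symmetry $y\leftrightarrow -y$ of $d_{S_1}$ interchanges the roles of $w_1$ and $w_2$.

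For part \emph{\cref{PropSiii}}, I would invoke Banach--Steinhaus using the uniform bound from \emph{\cref{PropSi}}; it suffices to verify convergence on the dense subspace $C_c(\overline{\Omega};\R^n)$, where the dominated convergence theorem allows passage to the limit inside the integral. At $r=0$ one gets $\mathcal{S}_0 w(x)=n\avint_{S_1}(w(x)\cdot y)y\,d_{S_1}(y)$, and the standard moment identity $\avint_{S_1}y_iy_j\,d_{S_1}(y)=\frac{1}{n}\delta_{ij}$ (which follows by orthogonal invariance and $\sum_i y_i^2=1$) gives $\mathcal{S}_0 w=w$, paralleling \cref{I0w}.

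For part \emph{\cref{PropSiv}} I would adapt the Fourier-transform argument from \cref{TWellDef}{\it \cref{PropTiv}}. The associated Fourier multiplier is
\begin{align*}
\Psi_r(\xi):=n\int_0^1\avint_{S_1}yy^T e^{irsy\cdot\xi}\,d_{S_1}(y)\,ds,
\end{align*}
and the orthogonal invariance $\Psi_r(O\xi)=O\Psi_r(\xi)O^T$ reduces the computation of $|\Psi_r(\xi)|_2$ to the case $\xi=|\xi|e_1$. The sphere symmetries $y_i\mapsto -y_i$ for $i\neq 1$ render $\Psi_r(|\xi|e_1)$ diagonal, so its spectral norm equals $\max_i|(\Psi_r)_{ii}(|\xi|e_1)|$, and the trivial bound $|e^{irs|\xi|y_1}|=1$ combined with $\avint_{S_1}y_i^2\,d_{S_1}(y)=1/n$ yields $|\Psi_r(\xi)|_2\leq 1$ for all $\xi$ and all $r$. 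Plancherel then gives $\limsup_{r\to 0}\|\mathcal{S}_r\|_{L((L^2(\Omega))^n)}\leq 1$, while the matching lower bound $\liminf\geq 1$ follows from part \emph{\cref{PropSiii}} via Banach--Steinhaus.

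The main obstacle, if any, is conceptual rather than technical: one must recognise that for $\mathcal{S}_r$ the $L^2$-operator-norm estimate is actually cleaner than for $\mathcal{T}_r$, because the factor $y/|y|$ and the normalising weight $F_r(r|y|)$ of \cref{IrTr} are replaced by the single bounded vector $y\in S_1$. Consequently the Fourier symbol is automatically contractive and no refined pointwise estimate on $\Phi_r$ (analogous to \cref{ctrub}) is needed. All other steps are essentially transcriptions of the proof of \cref{TWellDef} with $B_1$ replaced by $S_1$.
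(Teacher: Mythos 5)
Your proposal is correct and takes essentially the same route as the paper: the paper's proof of \cref{SWellDef} explicitly states that it ``almost repeats'' that of \cref{TWellDef}, verifies only the norm bound \cref{Snorm} via the same H\"older--Fubini--translation argument you describe, and omits the remaining details. The details you supply for parts \emph{(ii)}--\emph{(iv)} — including the moment identity $\avint_{S_1}y_iy_j\,d_{S_1}(y)=\tfrac{1}{n}\delta_{ij}$ and the observation that the Fourier symbol is automatically contractive because $|y|=1$ removes the weight $F_r$ — are exactly the intended transcription of the \cref{TWellDef} proof with $B_1$ replaced by $S_1$.
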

 \begin{proof}
 The proof almost repeats that of {\cref{TWellDef}}. Therefore, we only check \cref{Snorm} and omit further details.  Let $p\in[1,\infty)$ and $\frac{1}{p}+\frac{1}{p^*}=1$. 
 Using  H\"older's inequality, Fubini's theorem, and our convention that $w$ vanishes outside $\Omega$ we deduce for all $w\in \Lpn$ that
 \begin{align}
  \|{\cal S}_rw\|_{\Lpn}^p=&n^p\int_{\Omega}\left|\int_0^1\avint_{S_{1}} (w(x+rsy)\cdot y)y\,d_{S_1}(y)ds\right|^p\,dx\nonumber\\
  \leq &n^p\int_{\Omega}\int_0^1\avint_{S_{1}}|w(x+rsy)|^p\,d_{S_1}(y)dsdx\nonumber\\
  =&n^p\int_0^1\avint_{S_{1}}\int_{\Omega}|w(x+rsy)|^p\,dxd_{S_1}(y)ds,\nonumber\\
  \leq&n^p\int_0^1\avint_{S_{1}}\int_{\Omega}|w(z)|^p\,dzd_{S_1}(y)ds\nonumber\\
  =&n^p\|w\|_{\Lpn}^p\ ,\nonumber
 \end{align}
 which means that 
 \begin{align}
  \|{\cal S}_r\|_{L(\Lpn)}\leq n.\label{est7_}
 \end{align}
 The proof in the case $p=\infty$  follows the same steps, or, alternatively, one passes to the limit as $p\rightarrow\infty$ in \cref{est7_}. 

 \end{proof}
\begin{Remark}
The constants in \cref{Tbnd} for any  $n\geq1$ and in \cref{Snorm} for $n\geq2$ are not necessarily optimal. For $p \neq 2$ it remains open whether or not 
\begin{align}
 \underset{r\rightarrow0}{\lim\inf}\left\|{\cal T}_{r}\right\|_{ L((L^{ p}(\Omega))^n)}=1,\nonumber\\
 \underset{r\rightarrow0}{\lim\inf}\left\|{\cal S}_{r}\right\|_{ L((L^{ p}(\Omega))^n)}=1.\nonumber
\end{align}
The answer may depend upon $\Omega$ { and $p$}.
 
\end{Remark}

\section{Well-posedness for a class of  evolution equations involving \texorpdfstring{${\cal T}_r$ or ${\cal S}_r$}{}}\label{modelEq}
In this Section we establish  the existence and uniqueness of solutions to a certain  class of single evolution equations involving ${\cal T}_r$ or ${\cal S}_r$. This result is an important ingredient for our analysis of nonlocal systems in \cref{model}. Thus, 
we consider the following initial boundary value problem:
\begin{subequations}\label{modelLin}
 \begin{alignat}{3}
 &\partial_t c_r=\nabla\cdot (\Cl[a]{a1}\nabla c_r-\Cl[a]{a2}{G_{\varepsilon}({\cal R}_{r}(\Cl[a]{a3}\nabla c_r))})
+f&&\qquad\text{ in }(0,T)\times\Omega,\label{modelLinc}\\
&(\Cr{a1}\nabla c_r-\Cr{a2}{\cal R}_{r}(\Cr{a3}\nabla c_r))\cdot\nu=0&&\qquad\text{ in }(0,T)\times\partial\Omega,\label{modelLinbc}\\
&c_r(0,\cdot)=c_0&&\qquad\text{ in }\Omega.
\end{alignat}
 \end{subequations}
 Here
 \begin{align*}
   {\cal R}_r\in\{{\cal T}_r,{\cal S}_r\},
 \end{align*}
 and for $\varepsilon \geq 0$ we set 
 \begin{align} \label{feps}
 G_{\varepsilon}: \R^n \rightarrow \R^n, \quad x \mapsto \frac{x}{1+ \varepsilon |x|}.
 \end{align}
 A standard calculation shows that $G_{\varepsilon}$ is globally Lipschitz with a Lipschitz constant $1$.
 \begin{Remark}Observe that for $\varepsilon=0$ equation \cref{modelLinc} is linear, whereas for $\varepsilon>0$ the nonlocal part of the flux is a priori bounded. The latter helps us to construct nonnegative solutions in \cref{model}.\end{Remark}
 \noindent
We make the following assumptions: 
\begin{align}
  &\Cr{a1},\Cr{a2},\Cr{a3}\in L^{\infty}(0,T;L^{\infty}(\Omega)),\label{acond1}\\
  &\Cr{a1}>0\text{ and } \Cr{a1}^{-1}\in L^{\infty}(0,T;L^{\infty}(\Omega)),\label{acond2}\\
  &\left\|\Cr{a1}^{-\frac{1}{2}}\Cr{a2}\right\|_{L^{\infty}(0,T;L^{\infty}(\Omega))}\left\|\Cr{a1}^{-\frac{1}{2}}\Cr{a3}\right\|_{L^{\infty}(0,T;L^{\infty}(\Omega))}\left\|{\cal R}_{r}\right\|_{{ L((L^2(\Omega))^n)}}<1,\label{acond3}\\
  &f\in L^2(0,T;(H^1(\Omega))^*),\\
  &c_0\in L^2(\Omega).\label{acondc0}
\end{align}
To shorten the notation, we introduce a pair of constants
\begin{align*}
 \alpha_r:=&\|\Cr{a1}^{-1}\|_{L^{\infty}(0,T;L^{\infty}(\Omega))}^{-1}\left(1-\left\|\Cr{a1}^{-\frac{1}{2}}\Cr{a2}\right\|_{L^{\infty}(0,T;L^{\infty}(\Omega))}\left\|\Cr{a1}^{-\frac{1}{2}}\Cr{a3}\right\|_{L^{\infty}(0,T;L^{\infty}(\Omega))}\left\|{\cal R}_{r}\right\|_{L((L^2(\Omega))^n)}\right),\\
 M_r:=&\|\Cr{a1}\|_{L^{\infty}(0,T;L^{\infty}(\Omega))}+\|\Cr{a2}\|_{L^{\infty}(0,T;L^{\infty}(\Omega))}\|\Cr{a3}\|_{L^{\infty}(0,T;L^{\infty}(\Omega))}\left\|{\cal R}_{r}\right\|_{L((L^2(\Omega))^n)}.
\end{align*}
Due to assumptions \cref{acond1,acond2,acond3} it is clear that 
\begin{align}
 {0<}\alpha_r, M_r{<\infty.}\nonumber
\end{align}
\noindent 
We introduce a family of operators
 \begin{align}
&\left<\mathcal{M}(t,u),\varphi\right>_{{(H^1(\Omega))^*,H^1(\Omega)}} := \int_{\Omega} \Cr{a1}(t,\cdot)\nabla u\cdot \nabla \varphi \, dx-\int_{\Omega} \Cr{a2}(t,\cdot) G_{\varepsilon}(\Cr{a3}(t,\cdot){\cal R}_{r}(\nabla u))\cdot\nabla \varphi \, dx,\nonumber                                                                                                                                                                                                                             \\
&\left<\mathcal{M}(u),\varphi\right>_{{ L^2(0,T;(H^1(\Omega))^*),L^2(0,T;H^1(\Omega))}} :=\int_0^T\left<\mathcal{M}(t,u),\varphi(t)\right>_{{(H^1(\Omega))^*,H^1(\Omega)}}\,dt.\nonumber
\end{align}
\begin{Lemma}\label{LemMon}
  Let \cref{acond1,acond2,acond3} be satisfied. Then:
  \begin{enumerate}[{\it (i)}]
\item\label{LemMoni} 
For a.a. $t\in[0,T]$ the operator \begin{align}
  \mathcal{M}(t,\cdot):H^1(\Omega) \rightarrow (H^1(\Omega))^*  \nonumber                                               \end{align}
 is well-defined, monotone, hemicontinuous, and satisfies for all $u\in H^1(\Omega)$ the bounds
 \begin{align} 
 &\left< {\cal M}(t,u),u \right>_{{(H^1(\Omega))^*,H^1(\Omega)}} \geq {\alpha_{r} ||\nabla u ||_{(L^2(\Omega))^n}^{{2}}},\label{estbelow}\\
 &||{\cal M}(t,u)||_{(H^1(\Omega))^*} \leq M_r ||\nabla u ||_{(L^2(\Omega))^n}. \label{estmtu}                                                                                              
\end{align}
Moreover, for all $u\in H^1(\Omega)$  the function $\mathcal{M}(\cdot,u)$ is measurable.
\item\label{LemMonii}  The operator
\begin{align}
  \mathcal{M}:L^2(0,T;H^1(\Omega)) \rightarrow L^2(0,T;(H^1(\Omega))^*                                                )\nonumber
  \end{align}
 is well-defined, monotone, hemicontinuous, and satisfies for all $u\in L^2(0,T;H^1(\Omega))$ the bounds
 \begin{align} 
 &\left< {\cal M}(u),u \right>_{{ L^2(0,T;(H^1(\Omega))^*),L^2(0,T;H^1(\Omega))}} \geq {\alpha_{r} ||\nabla u ||_{L^2(0,T;(L^2(\Omega))^n)}^{{ 2}}},\nonumber
 \\
 &||{\cal M}(u)||_{L^2(0,T;(H^1(\Omega))^*                                                )} \leq M_r ||\nabla u ||_{L^2(0,T;(L^2(\Omega))^n)}.\nonumber
\end{align}
\end{enumerate}

\end{Lemma}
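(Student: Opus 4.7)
My plan is to verify the properties in (\emph{i}) for fixed $t$ first and then lift everything to (\emph{ii}) via time integration, which is essentially automatic given the pointwise-in-$t$ statements.

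\smallskip
\textbf{Step 1 (well-definedness and the upper bound \eqref{estmtu}).} For fixed $t$ and $u\in H^1(\Omega)$ the map $\varphi\mapsto\langle\mathcal{M}(t,u),\varphi\rangle$ is linear by construction. To show boundedness on $H^1(\Omega)$, I apply Cauchy--Schwarz in the two integrals defining $\mathcal{M}(t,u)$, use the trivial pointwise bound $|G_\varepsilon(x)|\leq|x|$ implied by \eqref{feps}, and use $\Cr{a1},\Cr{a2},\Cr{a3}\in L^\infty$ together with the $L^2$-continuity of $\mathcal{R}_r$ supplied by \cref{TWellDef}\emph{\cref{PropTi}} and \cref{SWellDef}\emph{\cref{PropSi}}. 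The estimate \eqref{estmtu} then falls out with the constant $M_r$ as written.

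\smallskip
\textbf{Step 2 (coercivity \eqref{estbelow}).} Compute
\[
\langle\mathcal{M}(t,u),u\rangle=\int_\Omega \Cr{a1}|\nabla u|^2\,dx-\int_\Omega \Cr{a2}\,G_\varepsilon(\Cr{a3}\mathcal{R}_r(\nabla u))\cdot\nabla u\,dx.
\]
For the positive term I use $\int\Cr{a1}|\nabla u|^2\geq \|\Cr{a1}^{-1}\|_\infty^{-1}\|\nabla u\|^2$. For the cross term I factor $\Cr{a2}=\Cr{a1}^{1/2}(\Cr{a1}^{-1/2}\Cr{a2})$ and $\Cr{a3}=\Cr{a1}^{1/2}(\Cr{a1}^{-1/2}\Cr{a3})$, invoke $|G_\varepsilon(y)|\leq|y|$ to pass the absolute value through $G_\varepsilon$, pull out the two $L^\infty$ constants appearing in assumption \eqref{acond3}, and close the estimate using the $L^2$-operator norm of $\mathcal{R}_r$. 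Combining both bounds and using \eqref{acond3} produces precisely the constant $\alpha_r$.

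\smallskip
\textbf{Step 3 (monotonicity).} Since $\mathcal{R}_r$ is linear, I have
\[
\langle\mathcal{M}(t,u)-\mathcal{M}(t,v),u-v\rangle=\int_\Omega \Cr{a1}|\nabla(u-v)|^2\,dx-\int_\Omega \Cr{a2}\bigl[G_\varepsilon(\Cr{a3}\mathcal{R}_r\nabla u)-G_\varepsilon(\Cr{a3}\mathcal{R}_r\nabla v)\bigr]\cdot\nabla(u-v)\,dx.
\]
The Lipschitz property of $G_\varepsilon$ with constant $1$ (noted right after \eqref{feps}) yields the pointwise inequality
\[
|G_\varepsilon(\Cr{a3}\mathcal{R}_r\nabla u)-G_\varepsilon(\Cr{a3}\mathcal{R}_r\nabla v)|\leq |\Cr{a3}|\,|\mathcal{R}_r(\nabla(u-v))|,
\]
after which the second integral can be estimated exactly as in Step~2 applied to $u-v$. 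Monotonicity then follows from $\alpha_r>0$, and in fact I obtain the stronger statement $\langle\mathcal{M}(t,u)-\mathcal{M}(t,v),u-v\rangle\geq\alpha_r\|\nabla(u-v)\|_{(L^2)^n}^2$.

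\smallskip
\textbf{Step 4 (hemicontinuity and $t$-measurability).} For hemicontinuity I fix $u,v,w\in H^1(\Omega)$ and study $s\mapsto\langle\mathcal{M}(t,u+sv),w\rangle$. The linear part in $s$ is trivially continuous. For the nonlinear part, $\Cr{a3}\mathcal{R}_r\nabla(u+sv)\to \Cr{a3}\mathcal{R}_r\nabla u$ in $(L^2(\Omega))^n$ as $s\to s_0$ by linearity and $L^2$-continuity of $\mathcal{R}_r$; continuity of $G_\varepsilon$ combined with its global Lipschitz bound and dominated convergence yield convergence in $(L^2(\Omega))^n$, which suffices upon pairing with the $(L^2(\Omega))^n$ function $\Cr{a2}\nabla w$. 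Measurability of $t\mapsto\mathcal{M}(t,u)\in(H^1(\Omega))^*$ for fixed $u$ follows from the measurability of the coefficient functions together with the fact that $\mathcal{R}_r$ is a $t$-independent continuous linear operator.

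\smallskip
\textbf{Step 5 (part (ii) via integration).} The time-integrated operator $\mathcal{M}$ is well-defined as a Nemytskii-type operator thanks to Step~4 (measurability) and Step~1 (pointwise bound), and the bound on $\|\mathcal{M}(u)\|_{L^2(0,T;(H^1)^*)}$ follows by squaring \eqref{estmtu} and integrating. Monotonicity and the coercive lower bound transfer directly by integrating in $t$ the corresponding pointwise inequalities from Steps~2--3. Hemicontinuity of $\mathcal{M}$ reduces to hemicontinuity in Step~4 combined with dominated convergence in $t$, using the uniform-in-$s$ majorant from Step~1.

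\smallskip
\textbf{Main obstacle.} The only nonroutine point is matching the exact constant $\alpha_r$ in the coercivity estimate \eqref{estbelow}: it is crucial to split the coefficients as $\Cr{a2}=\Cr{a1}^{1/2}(\Cr{a1}^{-1/2}\Cr{a2})$ and $\Cr{a3}=\Cr{a1}^{1/2}(\Cr{a1}^{-1/2}\Cr{a3})$ \emph{before} invoking $|G_\varepsilon(\cdot)|\leq|\cdot|$, so that the $L^2$-operator norm of $\mathcal{R}_r$ can be extracted against the correct weighted integral. Everything else is a routine Cauchy--Schwarz/dominated convergence exercise once this factoring is in place.
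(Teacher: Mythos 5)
Your plan is correct and follows essentially the same route as the paper: the core of the argument is the identical weighted factorisation $a_2=a_1^{1/2}(a_1^{-1/2}a_2)$, $a_3=a_1^{1/2}(a_1^{-1/2}a_3)$ combined with the Lipschitz bound for $G_{\varepsilon}$ (constant $1$) and the $L^2$-operator norm of $\mathcal{R}_r$, with part \emph{(ii)} handled by time integration, just as the paper asserts. The only structural difference is cosmetic: the paper proves the monotonicity inequality first and then obtains \eqref{estbelow} by taking $v=0$ and using $\mathcal{M}(t,0)=0$, whereas you run the same computation twice (Steps 2 and 3). One caution on Step 2 as literally written: you must keep the positive term in the weighted form $\bigl\|a_1^{1/2}\nabla u\bigr\|_{(L^2(\Omega))^n}^2$ and apply the pointwise bound $a_1\geq\|a_1^{-1}\|_{\infty}^{-1}$ only \emph{after} extracting the factor $\bigl(1-\|a_1^{-1/2}a_2\|_{\infty}\|a_1^{-1/2}a_3\|_{\infty}\|\mathcal{R}_r\|\bigr)$; lower-bounding the positive term by $\|a_1^{-1}\|_{\infty}^{-1}\|\nabla u\|^2$ first, while the cross term is still controlled by the weighted norm, does not yield $\alpha_r$ (and could even leave a negative bound). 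Your concluding \emph{Main obstacle} remark indicates you intend exactly the correct order, so this is a matter of presentation rather than a gap.
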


\begin{proof}
 The assumptions on the coefficients $a_i$ together with the Lipschitz continuity of $G_{\varepsilon}$ readily imply   that  for a.a. $t\in[0,T]$  the operator ${\cal M}(t, \cdot)$ is well-defined and   satisfies \cref{estmtu}. Moreover, due to \cref{acond1} and $G_{\varepsilon}$ Lipschitz, it is also clear that ${\cal M}(\cdot ,u):[0,T]\rightarrow(H^1(\Omega))^*$ is measurable on $[0,T]$ for all $u\in H^1(\Omega)$, whereas for a.a. $t$ the mapping  $\lambda \mapsto\left< {\cal M}(t,u + \lambda v), w\right>_{{(H^1(\Omega))^*,H^1(\Omega)}}$ is continuous on $\R$, the latter meaning that $\mathcal{M}(t,\cdot)$ is hemicontinuous. 
Using  H\"older's inequality, the fact that $G_{\varepsilon}$ is Lipschitz with Lipschitz constant $1$, the assumptions on the $a_i$'s, and the properties of ${\cal R}_r$, we compute that
\begin{align}
&\left< {\cal M}(t,u)-{\cal M}(t,v), u-v\right>_{{(H^1(\Omega))^*,H^1(\Omega)}} \nonumber\\
 =&\int_{\Omega}\nabla (u-v)\cdot \Cr{a1}(t,\cdot)\nabla (u-v) \, dx-\int_{\Omega}\left(G_{\varepsilon}({\cal R}_{r}(\Cr{a3}(t,\cdot)\nabla u))-G_{\varepsilon}({\cal R}_r(\Cr{a3}(t,\cdot)\nabla v))\right)\cdot \Cr{a2}(t,\cdot) \nabla (u-v)\,dx\nonumber\\
 \geq&\left\|\Cr{a1}^{\frac{1}{2}}\nabla (u-v)\right\|^2_{(L^2(\Om))^n}
 -\int_{\Omega} \left|{\cal R}_{r}\left(\Cr{a1}^{-\frac{1}{2}}\Cr{a3}(t,\cdot)\left(\Cr{a1}^{\frac{1}{2}}\nabla (u-v)\right)\right)\right|\left| \Cr{a1}^{-\frac{1}{2}}\Cr{a2}(t,\cdot)\left(\Cr{a1}^{\frac{1}{2}} \nabla (u-v)\right)\right| \,dx\nonumber\\
 \geq&\left(1-\left\|\Cr{a1}^{-\frac{1}{2}}\Cr{a2}\right\|_{L^{\infty}(0,T;L^{\infty}(\Omega))}\left\|\Cr{a1}^{-\frac{1}{2}}\Cr{a3}\right\|_{L^{\infty}(0,T;L^{\infty}(\Omega))}\left\|{\cal R}_{r}\right\|_{{L((L^2(\Omega))^n)}}\right)\left\|a_1^{\frac{1}{2}}\nabla (u-v)\right\|^2_{(L^2(\Om))^n}\nonumber\\
 \geq& \alpha_r \left\|\nabla (u-v)\right\|^2_{(L^2(\Om))^n}\label{unicoera}
\end{align}
for  $u,v\in H^1(\Omega)$, which proves monotonicity. Further, taking $v = 0$ in \cref{unicoera} and using ${\cal M}(t,0) = 0$ yields \cref{estbelow}. Part {\it\cref{LemMoni}} is thus proved. A proof of  {\it\cref{LemMonii}} can be done similarly; we omit further details.  

\end{proof}

\noindent
Using the properties of the averaging operators proved in {\cref{OperProp}} we can define weak solutions to \cref{modelLin} in a manner very similar to that for  the classical, purely local case (i.e., when $\Cr{a2}\equiv 0$):
\begin{Definition}\label{DefmodelW}
 Let \cref{acond1}-\cref{acondc0} hold. We call the function $c_r:[0,T]\times \overline{\Omega}\rightarrow\R$ a weak solution of \cref{modelLin} if:
 \begin{enumerate}[{\it (i)}]
  \item $c_r\in L^2(0,T;H^1(\Omega))\cap C([0,T];L^2(\Omega))$, $\partial_t c_r\in L^2(0,T;(H^1(\Omega))^*)$;
  \item $c_r$ satisfies \cref{modelLinc}-\cref{modelLinbc} in the following sense: for all $\varphi\in H^1(\Omega)$ { and  a.a. $t\in(0,T)$}
  \begin{align}
   \left<\partial_t c_r,\varphi\right>_{{(H^1(\Omega))^*,H^1(\Omega)}}= -\int_{\Omega}{\Cr{a1}}\nabla c_r\cdot  \nabla \varphi \, dx +\int_{\Omega} \Cr{a2} G_{\varepsilon}(\Cr{a3}{\cal R}_{r}(\nabla c_r))\cdot\nabla \varphi\,dx+\left<f,\varphi\right>_{{(H^1(\Omega))^*,H^1(\Omega)}};\label{weakF}
  \end{align}
  \item $c_r(0,\cdot)=c_0$ in $L^2(\Omega)$.
 \end{enumerate}
\end{Definition}

\noindent
Using  standard theory 
one readily proves the following existence result:
\begin{Lemma}\label{LemWPLin}
 Let \cref{acond1}-\cref{acondc0} hold. Then there exists a unique weak solution to \cref{modelLin} in terms of {\cref{DefmodelW}}.
 The solution satisfies the following estimates:
 \begin{align}
  &\|c_r\|_{C({ [}0,T{ ]};L^2(\Omega))}^2+\alpha_r\|\nabla c_r\|_{L^2(0,T;(L^2(\Omega))^n)}^2\leq \Cl{Ccr}(\alpha_r,T)\left(\|c_0\|_{L^2(\Omega)}^2+\|f\|_{L^2(0,T;(H^1(\Omega))^*)}^2\right),\label{estbas}\\
   &\|\partial_t c_r\|_{L^2(0,T;(H^1(\Omega))^*)}^2\leq \Cl{Cptcr}(\alpha_r,M_r,T)\left(\|c_0\|_{L^2(\Omega)}^2+\|f\|_{L^2(0,T;(H^1(\Omega))^*)}^2\right).\label{estdtc}
 \end{align}

\end{Lemma}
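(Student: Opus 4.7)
The plan is to recast \cref{modelLin} as an abstract Cauchy problem in the Gelfand triple $H^1(\Omega)\hookrightarrow L^2(\Omega)\hookrightarrow (H^1(\Omega))^*$, namely
\[
\partial_t c_r+{\cal M}(t,c_r)=f\quad\text{in }(H^1(\Omega))^*,\qquad c_r(0)=c_0,
\]
and then to invoke the standard existence theorem for parabolic equations whose principal part is bounded, hemicontinuous, monotone, and coercive (the Lions--Browder theorem, or equivalently a Galerkin approximation on $H^1(\Omega)$ using a countable Schauder basis followed by passage to the limit via the Minty trick). All the structural hypotheses this theorem requires are already supplied by \cref{LemMon}: boundedness by \cref{estmtu}, coercivity by \cref{estbelow}, monotonicity by \cref{unicoera}, hemicontinuity, and $t$-measurability. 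The coercivity only controls the $H^1$-seminorm $\|\nabla\cdot\|_{(L^2(\Omega))^n}$ --- which is unavoidable given the Neumann-type boundary condition --- but the missing $L^2$-contribution is harvested at the level of the energy identity by a Gronwall argument, as is standard.

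Uniqueness would follow directly from the monotonicity inequality \cref{unicoera}. If $c_r^1,c_r^2$ are two weak solutions for the same data, the difference $w=c_r^1-c_r^2$ satisfies $w\in L^2(0,T;H^1(\Omega))$ with $\partial_t w\in L^2(0,T;(H^1(\Omega))^*)$, so the Gelfand-triple chain rule $\left<\partial_t w,w\right>_{(H^1(\Omega))^*,H^1(\Omega)}=\tfrac{1}{2}\tfrac{d}{dt}\|w\|_{L^2(\Omega)}^2$ is applicable. Testing the equation for $w$ with $w$ and using \cref{unicoera} yields $\tfrac{d}{dt}\|w\|_{L^2(\Omega)}^2+2\alpha_r\|\nabla w\|_{(L^2(\Omega))^n}^2\leq 0$, so $w(0)=0$ forces $w\equiv 0$.

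For the energy estimate \cref{estbas}, I would test the equation with $c_r$ itself (permitted since $c_r\in C([0,T];L^2(\Omega))$ by the usual Aubin--Lions-type embedding) and invoke \cref{estbelow} to obtain
\[
\tfrac{1}{2}\tfrac{d}{dt}\|c_r\|_{L^2(\Omega)}^2+\alpha_r\|\nabla c_r\|_{(L^2(\Omega))^n}^2\leq\left<f,c_r\right>_{(H^1(\Omega))^*,H^1(\Omega)}.
\]
Bounding the right-hand side by $\|f\|_{(H^1(\Omega))^*}(\|c_r\|_{L^2(\Omega)}+\|\nabla c_r\|_{(L^2(\Omega))^n})$, applying Young's inequality to absorb the gradient term on the left and to convert the $L^2$-term into a Gronwall-ready differential inequality, integrating in time, and applying Gronwall's lemma deliver \cref{estbas} with a constant depending only on $\alpha_r$ and $T$. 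Estimate \cref{estdtc} then follows immediately from \cref{estbas}: one rewrites $\partial_t c_r=f-{\cal M}(c_r)$ in $(H^1(\Omega))^*$, bounds the second term via \cref{estmtu} by $M_r\|\nabla c_r\|_{L^2(0,T;(L^2(\Omega))^n)}$, and inserts \cref{estbas}.

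The genuine analytical difficulty --- namely, preserving monotonicity and strict coercivity despite the presence of the nonlocal operator ${\cal R}_r$ and the bounded nonlinear truncation $G_\varepsilon$ --- has already been resolved in \cref{LemMon}, where the smallness condition \cref{acond3} combined with the fact that $G_\varepsilon$ is $1$-Lipschitz is precisely what produces the positive constant $\alpha_r$ in \cref{estbelow}. With \cref{LemMon} in hand, the present statement becomes a textbook application of the monotone operator framework together with routine energy bookkeeping, and I anticipate no substantive further obstacle.
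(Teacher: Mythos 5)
Your proposal matches the paper's proof essentially verbatim: the paper likewise obtains existence and uniqueness directly from \cref{LemMon} via the standard monotone-operator theory (citing Showalter, Chapter~III, Proposition~4.1), derives \cref{estbas} by testing with $c_r$, using the Gelfand-triple chain rule, \cref{estbelow}, Young's inequality and Gronwall, and then deduces \cref{estdtc} from the equation together with \cref{estmtu} and \cref{estbas}. Your only additions --- spelling out uniqueness via \cref{unicoera} and flagging that the coercivity controls just the gradient seminorm --- are routine details the paper absorbs into its citation of the standard theory.
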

\begin{proof}
The existence of a unique weak solution to \cref{modelLin} is a direct consequence of  \cref{LemMon}{\it\cref{LemMoni}} and the standard theory of evolution equations with monotone operators, see, e.g. \cite[Chapter III Proposition 4.1]{Showalter}. It remains to check the bounds \cref{estbas,estdtc}.
Taking $\varphi:=c_r$ in the weak formulation \cref{weakF} and using \cite[Chapter III Lemma 1.2]{temam2001navier}, \cref{estbelow}, 
and the Young inequality, we obtain that
\begin{align}
 \frac{1}{2}\frac{d}{dt}\|c_r\|_{L^2(\Omega)}^2
 \leq &-\alpha_r\|\nabla c_r\|_{(L^2(\Omega))^n}^2+\|c_r\|_{H^1(\Omega)}\|f\|_{(H^1(\Omega))^*}\nonumber\\
 =&-\alpha_r\|c_r\|_{H^1(\Omega)}^2+\alpha_r\|c_r\|_{L^2(\Omega)}^2+\|c_r\|_{H^1(\Omega)}\|f\|_{(H^1(\Omega))^*}\nonumber\\
 \leq& -\frac{1}{2}\alpha_r\|c_r\|_{H^1(\Omega)}^2+\alpha_r\|c_r\|_{L^2(\Omega)}^2+\frac{1}{2}\alpha_r^{-1}\|f\|_{(H^1(\Omega))^*}^2,\nonumber
\end{align}
which yields \cref{estbas} due to the Gronwall lemma. 
Finally, using \cref{estmtu}, we obtain from the weak formulation \cref{weakF} that
\begin{align}
 \|\partial_t c_r\|_{L^2(0,T;(H^1(\Omega))^*)}^2\leq &2M_r^2\|\nabla c_r\|_{L^2(0,T;(L^2(\Om))^n)}^2+2\|f\|_{L^2(0,T;(H^1(\Omega))^*)}^2.\nonumber
\end{align}
Together with \cref{estbas} this implies  \cref{estdtc}.

\end{proof}

\section{Nonlocal models involving averaging operators \texorpdfstring{${\cal T}_r$ and ${\cal S}_r$}{}}\label{model}
In this section we 
study 
the following model IBVP:
 \begin{subequations}\label{nlProto}
 \begin{alignat}{3}
  &\partial_t c_{r}=\nabla\cdot\left(D_c(c_{r},v_{r})\nabla c_{r
  }-c_{r}\chi(c_{r},v_{r}) {\cal R}_{r}(\nabla g(c_r,v_r))\right)+f_c(c_{r},v_{r})&&\text{ in }\R^+\times\Omega,\label{nlProtoc}\\
  &\partial_t v_{r}=D_v\Delta v_r+f_v(c_{r},v_{r})&&\text{ in }\R^+\times\Omega,\label{nlProtov}\\
  &D_c(c_{r},v_{r})\partial_{\nu} c_{r
  }-c_{r}\chi(c_{r},v_{r}) {\cal R}_{r}(\nabla g(c_r,v_r))\cdot \nu=D_v \partial_{\nu}v_{r}=0&&\text{ in }\R^+\times\partial\Omega,\label{nlProtobc}\\
  &c_{r}(0,\cdot)=c_0,\ v_{r}(0,\cdot)=v_0&&\text{ in }\Omega.
 \end{alignat}
 \end{subequations}
 Here, as in the previous section, ${\cal R}_r$ stands for any of the two averaging operators:
 \begin{align}
{ {\cal R}_r\in\{{\cal T}_r,{\cal S}_r\}.}\nonumber
      \end{align}
 We assume that the diffusion coefficient $D_v$ is either a positive number, or it is zero.
 
 \noindent
 Equations \cref{nlProtoc}-\cref{nlProtov} are closely related to \cref{nlHapto} and \cref{nlChemo} in {\cref{intro}}, the difference being that the terms involving the adhesion 
velocity/non-local gradient are now replaced by those including the averaging operators ${\cal T}_{r}$/${\cal S}_{r}$ from {\cref{AverOper}}. Our motivation for introducing this change is twofold. First of all, due to \cref{ArTr} 
and \cref{DrSr} it affects the points in the boundary layer $\Omega\backslash \Omega_r$, at the most. On the other hand,  {\cref{BndL}} indicates that 
including, e.g.,  ${\cal A}_r$ can lead to limits with  unexpected  blow-ups on  the boundary of $\Omega$. 


\noindent
System \cref{nlProto} is a non-local version of the hapto-/chemotaxis system
\begin{subequations}\label{lProto}
 \begin{alignat}{3}
  &\partial_t c=\nabla\cdot\left(D_c(c,v)\nabla c-c\chi(c,v) \nabla g(c,v)\right)+f_c(c,v)&&\qquad\text{ in }\R^+\times\Omega,\label{lProtoc}\\
  &\partial_t v=D_v\Delta v+f_v(c,v)&&\qquad\text{ in }\R^+\times\Omega,\label{lProtov}\\
  &D_c(c,v)\partial_{\nu} c_{r
  }-c\chi(c,v) \partial_{\nu} g(c,v)=D_v \partial_{\nu}v=0&&\qquad\text{ in }\R^+\times\partial\Omega,\\
  &c(0,\cdot)=c_0,\ v(0,\cdot)=v_0&&\qquad\text{ in }\Omega.
 \end{alignat}
 \end{subequations}
 In this case, the actual diffusion and haptotactic sensitivity coefficients are
 \begin{align}
  &{\widetilde D}_c(c,v)=D_c(c,v)-c\chi (c,v)\partial_cg(c,v),\nonumber
  \\
  &{\widetilde \chi}(c,v)=\chi(c,v)\partial_vg(c,v),\nonumber%
  \label{chitil}
 \end{align}
 so that 
 in the classical formulation   
\cref{lProtoc} takes the form  
\begin{equation}
\partial_t c=\nabla\cdot\left({\widetilde D}_c(c,v)\nabla c-c{\widetilde \chi}(c,v) \nabla v\right)+f_c(c,v).\qquad\text{ in }\R^+\times\Omega.\nonumber
\end{equation} 
The main goal of this Section is to establish, under suitable assumptions on the system coefficients which are introduced in \cref{Assupm}, a rigorous convergence as $r\rightarrow0$ of solutions of the nonlocal model family \cref{nlProto} to those of the local model \cref{lProto}, see \cref{limitr}. This is accomplished in the final  \cref{SecConv}. Since we are  dealing here with a new type of nonlocal system,  we  establish for \cref{nlProto} the existence  of nonnegative solutions in \cref{SecEx,apriori}.

\subsection{Problem setting  and main result of the section} \label{Assupm}

We begin with several general assumptions  about the coefficients of system \cref{nlProto}.
\begin{Assumptions}\label{Assump1} Let $D_v\in \R^+_0$, $D_c,\chi\in C_b(\R^+_0\times\R^+_0)$, and $g,f_c,f_v\in C^1(\R^+_0\times\R^+_0)$ satisfy for some $s \geq 0$:
\begin{alignat}{3}
 & {\Cl{Dcmin}\leq D_c\leq \Cl{Dcmax}}& { \qquad \text{in }\R^+_0\times\R^+_0}&\qquad {\text{for some }\Cr{Dcmin},\Cr{Dcmax}>0,}\nonumber\\
 & {\nabla_{(c,v)}g,\ } {\nabla_{(c,v)} f_v\in (L^{\infty}(\R^+_0\times\R^+_0))^2,}&& \nonumber
 \\
 &f_c(0,\cdot)\equiv0,&&\nonumber
 \\
 &f_v(\cdot,0)\equiv0.&&\nonumber
\end{alignat}
Assume that the coefficients satisfy the following bounds:
\begin{align}
 &\Cr{Q1}:= {\sup_{c,v\geq0}c|\chi(c,v)|}<\infty, \label{cchi}\\
 &\Cr{Q2}:={\sup_{c,v\geq0}|\partial_cg(c,v)|}<\infty. \label{pcgb}
\end{align}

\noindent
{Further, we assume that the initial values satisfy 
\begin{align}
 &0\leq c_0 \in L^2(\Omega),\nonumber\\
 &{0\leq v_0} \in H^1(\Omega).\label{iniv}
\end{align}
 }
\end{Assumptions}
\begin{Remark}
       If $D_v>0$, then assumption \cref{iniv} can be replaced by a weaker one, such as
       \begin{align}
        v_0\in L^2(\Omega).\nonumber
       \end{align}
We keep \cref{iniv} in order to simplify the exposition. 
      \end{Remark}
\noindent
In addition, we will later choose one of the following assumptions on $f_c$ and the nonlocal operator: 
\begin{Assumptions}[Further assumptions on $f_c$] \label{Assump2}
One of the following { conditions} holds:
\begin{enumerate}[{\it (a)}] 
\item \label{Assump2a} $$\nabla_{(c,v)}f_c \in \left( L^{\infty}(\R_0^+\times \R_0^+)\right)^2$$
\item \label{Assump2b} 
\begin{alignat}{3}
&|f_c(c,v)| \leq \Cl{Nem}(1+|c|^s) \qquad&&\text{in }\R^+_0\times\R^+_0\qquad \text{for some }\Cr{Nem}\geq0, \label{ineqNem}
\\
& cf_c(c,v) \leq \Cl{Dis1}-\Cl{Dis2} c^{s+1} \qquad &&\text{in }\R^+_0\times\R^+_0\qquad \text{for some }\Cr{Dis1}\geq0,\ \Cr{Dis2}>0. \nonumber
\end{alignat}
\end{enumerate}
\end{Assumptions}

\begin{Assumptions}[Assumptions on $\mathcal{R}_r$] \label{Assump3}
One of the following holds:
\begin{enumerate}[{\it (a)}]
\item for a given fixed $r\in(0,r_0]$
 $$\Cl{Cgstr}(\|\mathcal{R}_r \|):=1-\frac{\Cr{Q1}\Cr{Q2}}{ \Cr{Dcmin}}\left\|{\cal R}_{r}\right\|_{L((L^2(\Omega))^n)}>0$$ \label{Assump3a}
\item 
\begin{align}
  \Cl{Cgst}:=\frac{\Cl{Q1}\Cl{Q2}}{\Cr{Dcmin} }
  \,<1.\label{condC*}
 \end{align} \label{Assump3b}
\end{enumerate}
\end{Assumptions}


\begin{Example}\label{Bsp} Let  
\begin{align*}
&D_v=0,\\
&F_r(\rho):=(n+1)e^{-r\rho},\\
 &g(c,v):=\frac{S_{cc}c+S_{cv}v}{1+c+v}\qquad\text{ for some constants}\qquad S_{cc},S_{cv}>0,\\
 &D_c(c,v) := \frac{1+c}{1+c+v},\\
 &\chi(c,v):=\frac{b}{1+c+v},\quad b>0,\\
 &f_c(c,v):=\mu_c{\frac{c}{1+c^2}}(K_c-c-\eta_c v)\qquad\text{ for some constants}\qquad
 \qquad K_c,\eta_c>0,\ \mu_c {>} 0,\\
  &f_v(c,v):=\mu_vv(K_v-v)-\lambda_vv{\frac{c}{1+c}}\qquad\text{ for some constants}\qquad K_v,\lambda_v>0,\  \mu_v\ge 0,
\end{align*}
and assume that
\begin{align}
 0\leq v_0\leq K_v.\nonumber
\end{align}
Then, it holds a priori that
\begin{align}
 0\leq v\leq K_v\nonumber
\end{align}
for any $v$ which solves \cref{nlProtov}. Therefore it suffices to consider the coefficient functions in $\R_0^+ \times [0,K_v]$.

\noindent
For $D_c$ it holds on $\R_0^+ \times [0,K_v]$ that
\begin{align}
D_c(c,v) \geq \frac{1+c}{1+c+K_v} \geq \frac{1}{1+ K_v} =: \Cr{Dcmin}\nonumber
\end{align}
and
\begin{align}
D_c(c,v) \leq 1 =: \Cr{Dcmax}.\nonumber
\end{align}
Moreover, $\nabla_{(c,v)}g,\ \nabla_{(c,v)} f_v \in (L^{\infty}(\R^+_0\times\R^+_0))^2$,  due to 
\begin{align}
\Cr{Q2} = \sup_{c,v\geq0}|\partial_cg(c,v)| =& \max_{0\leq v\leq K_v}\max_{c\geq0}\frac{|S_{cc}(1+v)-S_{cv}v|}{(1+c+v)^2} \nonumber\\
=& \max\left\{S_{cc},\left|\frac{S_{cc}}{1+K_v}-\frac{S_{cv}K_v}{(1+K_v)^2}\right| \right\},\nonumber\\
\sup_{c,v\geq0}|\partial_vg(c,v)|=&\max_{0\leq v\leq K_v}\max_{c\geq0}\frac{|S_{cv}(1+c)-S_{cc}c|}{(1+c+v)^2}\nonumber\\
=&\max_{c\geq0}\frac{|S_{cv}(1+c)-S_{cc}c|}{(1+c)^2}<\infty,\nonumber\\
\sup_{c,v \geq 0} |\partial_c f_v(c,v)| =& \lambda_v K_v \nonumber
\end{align}
and
\begin{align}
\sup_{c,v \geq 0} |\partial_v f_v(c,v)| = \sup_{c,v \geq 0}\left|\mu_v(K_v-2v)-\lambda_v \frac{c}{1+c} \right| < \infty.\nonumber
\end{align}
For $\Cr{Nem} {:=} \mu_c(K_c + 1 +\eta_c K_v)$, $\Cr{Dis1}{:=} \mu_c(K_c + 1)$ and $\Cr{Dis2}{:=} \mu_c $ we can estimate on $\R^+_0\times\R^+_0$ that
\begin{align}
&|f_c(c,v)| \leq \Cr{Nem}, \nonumber\\
&cf_c(c,v) \leq \mu_c \left( K_c + \frac{c}{1+c^2}-c\right) \leq \Cr{Dis1}-\Cr{Dis2} c.\nonumber
\end{align}
Further, 
\begin{align}
\Cr{Q1} = \underset{c\geq0}{ \sup} \frac{bc}{1+c} = b\nonumber
\end{align}
holds.


\noindent
Thus,  \textit{Assumptions} \ref{Assump1}, \ref{Assump2}{\eqref{Assump2b}} and \ref{Assump3}{ \eqref{Assump3b}} are fulfilled if 
\begin{align*}
 (1+K_v)b\max\left\{S_{cc},\left|\frac{S_{cc}}{1+K_v}-\frac{S_{cv}K_v}{(1+K_v)^2}\right| \right\} < 1.
\end{align*}

\noindent
This choice of coefficient functions can be used to describe a population of cancer cells which interact among themselves and with the surrounding extracellular matrix (ECM) tissue. Both interaction types are due to adhesion, whether to other cells (cell-cell adhesion) or to the matrix (cell-matrix adhesion). The interaction force $F_r(\rho)$ is taken to diminish with increasing interaction range $\rho $ and/or of the sensing radius $r$: cells too far apart/out of reach hardly interact in a direct way. Function $g(c,v)$ characterises effective interactions.
Here the coefficients $S_{cc}$ and $S_{cv}$ represent cell-cell and cell-matrix adhesion strengths, respectively. Our choice of $g$ accounts for some adhesiveness limitation imposed by high local cell and tissue densities. It is motivated by the fact that overcrowding may preclude further adhesive bonds, e.g. due to saturation of receptors. The diffusion coefficient $D_c(c,v)$ is chosen to be everywhere positive and increase with a growing population density, thus enhancing diffusivity under population pressure, but, further, limited by excessive cell-tissue interaction. The latter also applies to the choice of the sensitivity function $\chi$. Indeed, there is evidence that tight packing of cells and ECM limits diffusivity and the advective effects of haptotaxis \cite{lu12}. Thereby the constant $b>0$ is assumed to be rather small. Finally, $f_c$ and $f_v$ describe growth of cells and tissue limited by concurrence for resources. 

\end{Example}

\noindent
Next, we introduce weak-strong solutions to our problem. The definition is as follows:  
\begin{Definition}\label{DefSol}
  Let  \cref{Assump1} hold. Let $r\in [0,r_0]$. We call a pair of functions $(c_r,v_r):\R^+_0\times \overline{\Omega}\rightarrow\R^+_0\times\R^+_0$ a global \underline{weak-strong} solution of \cref{nlProto} if for all $T>0$:
 \begin{enumerate}[{\it (i)}]
  \item\label{regcr} $c_r\in L^2(0,T;H^1(\Omega))\cap C_w([0,T];L^2(\Omega))$, $\partial_t c_r\in L^1(0,T;(W^{1,\infty}(\Omega))^*)$;
  \item  $v_r\in C([0,T];H^1(\Omega))$, $\partial_t v_r\in L^2(0,T;L^2(\Omega))$,  $D_v v_r\in L^2(0,T;H^2(\Omega))$; 
  \item $f_c(c_r,v_r)\in L^1(0,T;L^1(\Omega))$, $f_v(c_r,v_r)\in L^{ 2}(0,T;L^{ 2}(\Omega))$;
  \item $(c_r,v_r)$ satisfies \cref{nlProto} in the following weak-strong sense: for all $\varphi\in C^1(\overline{\Omega})$ and a.a. $t\in(0,T)$  
  \begin{subequations}\label{weakcvr}
  \begin{align}
   \left<\partial_t c_r,\varphi\right>_{{(W^{1,\infty}(\Omega))^*,W^{1,\infty}(\Omega)}}=&-\int_{\Omega}\left(D_c(c_{r},v_{r})\nabla c_{r
  }-c_{r}\chi(c_{r},v_{r}) {\cal R}_{r}(\nabla g(c_r,v_r))\right)\cdot\nabla \varphi  \, dx\nonumber\\
  &+ \int _{\Om}f_c(c_r,v_r)\varphi\, dx,\label{weakcr}
\end{align}

\begin{align}
 c_{{r}}(0,\cdot)=c_0\qquad\text{in }L^2(\Omega),
\end{align}
and
\begin{alignat}{3}
  &{\partial_t v_r=D_v\Delta v_r+f_v(c_r,v_r)}\qquad&&{ \text{a.e. in }(0,T)\times\Omega},\label{weakvr}\\
  &{ D_v \partial_{\nu} v_r=0\qquad}&&{ \text{a.e. in }(0,T)\times\partial\Omega},\label{weakbcv}\\
  &v_{{r}}(0,\cdot)=v_0\qquad&&\text{in }H^1(\Omega).                                                      \label{initval} 
  \end{alignat}
\end{subequations}
 \end{enumerate}
\end{Definition}
\begin{Remark}
 Observe that for $r=0$ we obtain a  corresponding solution definition for the local system \cref{lProto}. 
\end{Remark}
\noindent
Our main result now reads:
\begin{Theorem}  \label{limitr}
Let \cref{Assf,Assump1,Assump2,Assump3}{\it\cref{Assump3b}} hold. 
Then, there exists a sequence $r_m\rightarrow0$ as $m\rightarrow\infty$ and  solutions $(c_{r_m},v_{r_m})$ and $(c,v)$ in terms of \cref{DefSol} corresponding to $r=r_m$ and $r=0$, respectively, s.t. 
 \begin{align} 
 c_{r_m} &\underset{m \rightarrow \infty}{\rightarrow} c \qquad \text{in } L^2(0,T;L^2(\Om)) ,\nonumber\\
 v_{r_m} &\underset{m \rightarrow \infty}{\rightarrow} v \qquad \text{in } L^2(0,T;L^2(\Om)).\nonumber
 \end{align}
\end{Theorem}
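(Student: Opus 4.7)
My plan is to obtain uniform-in-$r$ a priori estimates on the sequence $(c_r,v_r)$ (whose existence for each fixed $r\in(0,r_0]$ is established in the preceding subsections), extract convergent subsequences via an Aubin--Lions argument, and pass to the limit in the weak formulation \cref{weakcvr}. The crucial observation is that the nonlocal flux can be transported onto the test-function side using the self-adjointness \cref{sadj} of the averaging operators ${\cal R}_r$, reducing the problem to strong $L^2$ convergence of ${\cal R}_{r_m}$ applied to a \emph{strongly} convergent sequence, which is precisely what \cref{TWellDef} and \cref{SWellDef} deliver.

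First I would derive uniform estimates. Under \cref{Assump3}{\it\cref{Assump3b}} combined with the uniform bounds $\|{\cal R}_r\|_{L((L^2(\Omega))^n)}\leq C$ from \cref{TWellDef} and \cref{SWellDef}, the structural coercivity constant $\alpha_r$ appearing in \cref{LemWPLin} is bounded away from zero uniformly in $r$. Testing the weak formulation \cref{weakcr} against $c_r$ and exploiting $D_c\geq \Cr{Dcmin}$, the sensitivity bound $c\chi(c,v)\leq \Cr{Q1}$, $|\partial_c g|\leq \Cr{Q2}$, the assumptions on $f_c$ in \cref{Assump2}, and a Gronwall argument, I obtain $c_r$ bounded in $L^\infty(0,T;L^2(\Omega))\cap L^2(0,T;H^1(\Omega))$ (and also in $L^{s+1}((0,T)\times\Omega)$ in case \cref{Assump2}{\it\cref{Assump2b}}). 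Standard parabolic or ODE estimates applied to \cref{weakvr}, together with the Lipschitz continuity of $f_v$ and the lower bound $v_r\geq 0$, yield uniform bounds on $v_r$ in $C([0,T];H^1(\Omega))$ with $\partial_t v_r\in L^2(0,T;L^2(\Omega))$ and, if $D_v>0$, $v_r\in L^2(0,T;H^2(\Omega))$. Finally, from \cref{weakcr} I read off a uniform bound on $\partial_t c_r$ in $L^1(0,T;(W^{1,\infty}(\Omega))^*)$, using the uniform operator bound on ${\cal R}_r$ to control the nonlocal flux.

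Next, an Aubin--Lions argument with the chain $H^1(\Omega)\hookrightarrow\hookrightarrow L^2(\Omega)\hookrightarrow (W^{1,\infty}(\Omega))^*$ yields, along a subsequence $r_m\to 0$,
\begin{align*}
c_{r_m}\to c \text{ in } L^2(0,T;L^2(\Omega)),\qquad \nabla c_{r_m}\rightharpoonup\nabla c \text{ in } L^2(0,T;(L^2(\Omega))^n),
\end{align*}
and analogously $v_{r_m}\to v$ strongly in $L^2(0,T;H^1(\Omega))$ with $\nabla v_{r_m}\rightharpoonup\nabla v$ weakly in $L^2$. Passing to a further subsequence gives pointwise a.e.\ convergence of $(c_{r_m},v_{r_m})\to(c,v)$, whence by continuity and boundedness of $\chi$, $\partial_c g$, $\partial_v g$ together with dominated convergence the corresponding Nemytskii images converge strongly in every $L^p(0,T;L^p(\Omega))$, $p<\infty$.

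The main obstacle, and the heart of the argument, is the limit of the nonlocal term $\int_0^T\!\!\int_\Omega c_{r_m}\chi(c_{r_m},v_{r_m}){\cal R}_{r_m}(\nabla g(c_{r_m},v_{r_m}))\cdot\nabla\varphi\,dx\,dt$. Applying the self-adjointness \cref{sadj} this equals
\begin{align*}
\int_0^T\!\!\int_\Omega \nabla g(c_{r_m},v_{r_m})\cdot {\cal R}_{r_m}\bigl(c_{r_m}\chi(c_{r_m},v_{r_m})\nabla\varphi\bigr)\,dx\,dt.
\end{align*}
On the one hand, $w_{r_m}:=c_{r_m}\chi(c_{r_m},v_{r_m})\nabla\varphi$ converges strongly in $L^2(0,T;(L^2(\Omega))^n)$ to $w:=c\chi(c,v)\nabla\varphi$; the decomposition ${\cal R}_{r_m}w_{r_m}-w={\cal R}_{r_m}(w_{r_m}-w)+({\cal R}_{r_m}w-w)$ combined with the uniform operator bound and the pointwise approximation property \cref{convT} (or its analogue for ${\cal S}_r$) yields strong $L^2$ convergence of ${\cal R}_{r_m}w_{r_m}\to w$. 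On the other hand, expanding $\nabla g(c_{r_m},v_{r_m})=\partial_c g(c_{r_m},v_{r_m})\nabla c_{r_m}+\partial_v g(c_{r_m},v_{r_m})\nabla v_{r_m}$ and combining strong $L^p$ convergence of the coefficients with weak $L^2$ convergence of the gradients gives $\nabla g(c_{r_m},v_{r_m})\rightharpoonup\nabla g(c,v)$ weakly in $L^2$. A weak--strong product then passes the nonlocal flux to the limit. The diffusive term $D_c(c_{r_m},v_{r_m})\nabla c_{r_m}$ is treated by the same weak--strong product; the reaction terms converge by dominated or Vitali convergence (using the extra $L^{s+1}$ bound under \cref{Assump2}{\it\cref{Assump2b}}); the parabolic equation for $v$ is passed to the limit by standard arguments. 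Verifying the initial conditions via $c_{r_m}\rightharpoonup c$ in $C_w([0,T];L^2(\Omega))$ completes the proof.
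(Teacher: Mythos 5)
Your core limit mechanism --- moving $\mathcal{R}_{r_m}$ onto the test function via self-adjointness, the splitting $\mathcal{R}_{r_m}w_{r_m}-w=\mathcal{R}_{r_m}(w_{r_m}-w)+(\mathcal{R}_{r_m}w-w)$, and the weak--strong pairing with $\nabla g(c_{r_m},v_{r_m})$ --- is exactly the paper's argument. But the way you launch it contains a genuine gap. You claim that \cref{Assump3}\emph{(b)} together with ``the uniform bounds $\|\mathcal{R}_r\|_{L((L^2(\Omega))^n)}\le C$ from \cref{TWellDef,SWellDef}'' makes the coercivity constant $\alpha_r$ bounded away from zero uniformly in $r$, and you take for granted that solutions $(c_r,v_r)$ exist for \emph{every} $r\in(0,r_0]$. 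Neither holds. Writing $C_*<1$ for the constant in \cref{condC*}, both the existence theorems (\cref{thmmainEx,DthmmainEx}) and coercivity require $\|\mathcal{R}_r\|_{L((L^2(\Omega))^n)}<1/C_*$, i.e.\ precisely \cref{Assump3}\emph{(a)}, and this is \emph{not} implied by \cref{Assump3}\emph{(b)} plus the crude operator bounds: by \cref{Tbnd} and \cref{C2p} the bound for $\mathcal{T}_r$ at $p=2$ tends to $(n+1)\sqrt{n/(n+2)}>1$ as $r\to0$ (about $1.155$ for $n=1$), and \cref{Snorm} only gives $\|\mathcal{S}_r\|\le n$. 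So whenever $C_*$ is close to $1$ --- which \cref{Assump3}\emph{(b)} permits --- the crude bounds exceed $1/C_*$ and your a priori estimates never get off the ground (the sole exception being $\mathcal{R}_r=\mathcal{S}_r$ with $n=1$). What is actually needed, and what the paper uses as the very first step of its proof, is the sharp asymptotics $\|\mathcal{R}_r\|_{L((L^2(\Omega))^n)}\underset{r\to0}{\to}1$ from \cref{TWellDef}\emph{(iv)}/\cref{SWellDef}\emph{(iv)} (the Fourier-transform computation): since $1/C_*>1$, one selects $r_m\to0$ with $\sup_m\|\mathcal{R}_{r_m}\|_{L((L^2(\Omega))^n)}<1/C_*$, which simultaneously makes \cref{Assump3}\emph{(a)} hold for each $r_m$ (so \cref{DthmmainEx} supplies the solutions at all) and renders the constants in \cref{boundcr}--\cref{bnfintf_v} independent of $m$. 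This selection step is the reason the theorem asserts convergence only along a sequence $r_m\to0$, and it is missing from your proposal.

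A secondary overstatement: you assert $v_{r_m}\to v$ strongly in $L^2(0,T;H^1(\Omega))$. For $D_v>0$ this follows from the uniform $L^2(0,T;H^2(\Omega))$ bound and Lions--Aubin, but for $D_v=0$ the only gradient control is \cref{boundnvr}, an $L^\infty(0,T;(L^2(\Omega))^n)$ bound with no spatial compactness for $\nabla v_r$, so strong $H^1$-convergence is unavailable. Fortunately your argument only uses $\nabla v_{r_m}\rightharpoonup\nabla v$ weakly in $L^2$, which does hold, so this is cosmetic. Once the sequence-selection step above is inserted, the remainder of your proposal coincides with the paper's proof.
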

\noindent
This Theorem is proved in \cref{SecConv}. 
\begin{Notation}
 Dependencies upon such parameters as the space dimension $n$, domain $\Omega$, function $c$,  the norms of the initial data $c_0$ and $v_0$, norms and bounds for the coefficient functions are mostly 
{\bf not} indicated in an explicit way.
\end{Notation}

\subsection{Global existence of solutions to  \texorpdfstring{\cref{nlProto}}{}:  the case of \texorpdfstring{$f_c$}{} Lipschitz}\label{SecEx}
In this Subsection we address the existence of solutions to the nonlocal model \cref{nlProto} for the case when $f_c$ satisfies \cref{Assump2}{\it\cref{Assump2a}}. 
The main result of the Subsection is as follows:
\begin{Theorem} \label{thmmainEx}
Let  \cref{Assf,Assump1,Assump2}{\it\cref{Assump2a}} hold and let $r$ satisfy \cref{Assump3}{\it\cref{Assump3a}}.
Then there exists a global weak-strong solution to \cref{nlProto} in terms of \cref{DefSol} with $\partial_t c_r\in L^2(0,T;(H^1(\Omega))^*)$. 
\end{Theorem}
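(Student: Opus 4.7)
The plan is a two-level construction. At the inner level, for each $\varepsilon>0$, build a solution of the $G_\varepsilon$-regularized system via a Schauder fixed point for a linearized map $\mathcal{F}:(\tilde c,\tilde v)\mapsto(c_r,v_r)$. At the outer level, use the coercivity hypothesis in \cref{Assump3a} to derive $\varepsilon$-uniform estimates and pass $\varepsilon\to 0$ to obtain a solution of \cref{nlProto}.

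Fix $\varepsilon>0$ and a closed bounded convex $K\subset L^2(0,T;L^2(\Omega))^2$. Given $(\tilde c,\tilde v)\in K$, first solve the semilinear equation $\partial_t v_r=D_v\Delta v_r+f_v(\tilde c,v_r)$ with $v_r(0,\cdot)=v_0$ and no-flux boundary condition: Lipschitzness of $f_v$, $f_v(\cdot,0)\equiv 0$ and \cref{iniv} give a unique $v_r\in C([0,T];H^1(\Omega))$ with $\partial_t v_r\in L^2(0,T;L^2(\Omega))$ and $D_v\Delta v_r\in L^2(0,T;L^2(\Omega))$. Then solve the linear $c$-equation
\begin{equation*}
\partial_t c_r=\nabla\cdot\!\left(D_c(\tilde c,\tilde v)\nabla c_r-\tilde c\,\chi(\tilde c,\tilde v)\,G_\varepsilon\!\bigl(\mathcal{R}_r(\partial_c g(\tilde c,\tilde v)\nabla c_r+\partial_v g(\tilde c,\tilde v)\nabla v_r)\bigr)\right)+f_c(\tilde c,\tilde v),
\end{equation*}
with initial datum $c_0$ and no-flux boundary condition, via a direct adaptation of \cref{LemWPLin}: the coefficients $a_1=D_c(\tilde c,\tilde v)$, $a_2=\tilde c\,\chi(\tilde c,\tilde v)$, $a_3=\partial_c g(\tilde c,\tilde v)$ satisfy \cref{acond1,acond2,acond3} thanks to \cref{cchi,pcgb} and \cref{Assump3a}, and the additional known shift $\partial_v g(\tilde c,\tilde v)\nabla v_r$ inside $\mathcal{R}_r$ is harmless because $G_\varepsilon$ is $1$-Lipschitz, so the monotonicity step \cref{unicoera} carries through verbatim.

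A priori estimates analogous to \cref{estbas,estdtc} together with the Lipschitzness of $D_c,\chi,g,f_c,f_v$ show that $\mathcal{F}(K)\subset K$ for $K$ sufficiently large and that $\mathcal{F}$ is compact on $K$ in the $L^2(0,T;L^2(\Omega))^2$ topology (Aubin--Lions). Schauder then delivers a fixed point $(c_r^\varepsilon,v_r^\varepsilon)$, which is a weak-strong solution of the $G_\varepsilon$-regularized version of \cref{nlProto}. Nonnegativity of $c_r^\varepsilon$ is obtained by testing with $-(c_r^\varepsilon)_-$: the nonlocal drift belongs to $L^\infty((0,T)\times\Omega)$ because $|G_\varepsilon|\leq\varepsilon^{-1}$, so a Gronwall argument forces $(c_r^\varepsilon)_-\equiv 0$. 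Nonnegativity of $v_r^\varepsilon$ is standard given $v_0\geq 0$ and $f_v(\cdot,0)\equiv 0$.

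For the outer passage $\varepsilon\to 0$, test the $c$-equation with $c_r^\varepsilon$ and repeat the manipulation in \cref{unicoera}: \cref{Assump3a} gives precisely the positive leftover needed to absorb the nonlocal term into the diffusion, hence $\varepsilon$-uniform bounds on $c_r^\varepsilon$ in $L^\infty(0,T;L^2(\Omega))\cap L^2(0,T;H^1(\Omega))$. The Lipschitz hypothesis \cref{Assump2a} with $f_c(0,\cdot)\equiv 0$ yields $f_c(c_r^\varepsilon,v_r^\varepsilon)\in L^2(0,T;L^2(\Omega))$ uniformly, and hence $\partial_t c_r^\varepsilon$ bounded uniformly in $L^2(0,T;(H^1(\Omega))^*)$; the $v$-equation transmits the corresponding parabolic estimates on $v_r^\varepsilon$. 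Aubin--Lions extracts a subsequence converging strongly in $L^2(0,T;L^2(\Omega))^2$, and the continuity of $\mathcal{R}_r$ on $L^2$ from \cref{TWellDef,SWellDef} together with the pointwise convergence $G_\varepsilon\to\mathrm{id}$ on bounded sets allow passage to the limit in every term of \cref{weakcvr}. The main obstacle throughout is preserving coercivity of the elliptic part under the nonlocal advective correction: without \cref{Assump3a}, the nonlocal drift cannot be absorbed into the diffusion, so neither the well-posedness of the inner linearized equation nor the $\varepsilon$-uniform $H^1$-bound on $c_r^\varepsilon$ would go through.
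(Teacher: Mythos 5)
Your overall strategy is the same as the paper's (Lemma \ref{thmexr} plus the proof of \cref{thmmainEx}): $G_\varepsilon$-flux-limitation, freezing of the solution in the coefficients, solving the $v$-equation by parabolic theory and the linearized $c$-equation by the monotone-operator machinery of \cref{LemMon,LemWPLin}, a fixed-point argument, nonnegativity via the a priori bounded drift $|G_\varepsilon|\le\varepsilon^{-1}$, and finally $\varepsilon\to0$ using the coercivity supplied by \cref{Assump3}{\it(a)}. However, there are two concrete gaps. First, the theorem covers $D_v=0$ as well (\cref{Assump1} allows $D_v\in\R^+_0$), and your inner construction breaks down there: with $\tilde c$ only in $L^2(0,T;L^2(\Omega))$, the equation $\partial_t v_r=f_v(\tilde c,v_r)$ gives no control of $\nabla v_r$ (the gradient satisfies an ODE with source $\partial_c f_v(\tilde c,v_r)\nabla\tilde c$, and $\nabla\tilde c$ is not available), so the drift term $\mathcal{R}_r(\partial_v g\,\nabla v_r)$ is not even defined in $L^2$; the regularity estimate \eqref{apriori1} you implicitly use requires $D_v>0$. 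The paper first proves the result for $D_v>0$ and then performs a separate vanishing-diffusion limit $D_v\to0$, exploiting that $\nabla v$ enters the flux linearly so no strong compactness of $\nabla v_{rD_v}$ is needed.

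Second, the Schauder invariance claim ``$\mathcal{F}(K)\subset K$ for $K$ sufficiently large'' fails as stated: the available estimate has the affine form $\|c_r\|^2_{L^2(0,T;L^2(\Omega))}\le C(r,T)\bigl(1+\|\tilde c\|^2_{L^2(0,T;L^2(\Omega))}\bigr)$ with a Gronwall constant $C\ge1$ in general, and $C(1+R^2)\le R^2$ has no solution $R$ when $C\ge1$, so no ball is invariant. This is exactly why the paper uses the Leray--Schauder principle instead: one then needs only a uniform bound on the fixed points of $\lambda\Phi$, which follows from a Gronwall argument applied to the genuine equation, where the frozen and unknown functions coincide. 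Your step is repairable (short-time interval plus continuation, or an exponentially weighted norm), but as written it does not go through. Three smaller glosses: (i) the coefficients are defined only on $\R^+_0\times\R^+_0$, so before nonnegativity is established you must extend them to $c<0$ as the paper does; (ii) Schauder requires continuity of $\mathcal{F}$, which is the technical heart here --- since $\nabla c_{r\varepsilon m}$ converges only weakly against varying frozen coefficients, the paper needs a Minty--Browder argument to identify the limit; (iii) in the passage $\varepsilon\to0$, ``pointwise convergence $G_\varepsilon\to\mathrm{id}$ on bounded sets'' is insufficient because the argument of $G_{\varepsilon_m}$ converges only weakly in $L^2$; one needs the quantitative identity $G_\varepsilon(x)-x=-\varepsilon\,x|x|/(1+\varepsilon|x|)$, hence $|G_\varepsilon(x)-x|\le\varepsilon|x|^2$, to show the discrepancy tends weakly to zero, and then conclude by compensated compactness as in the paper.
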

\noindent
Since we aim at constructing nonnegative solutions, it turns out to be helpful to consider first the following family of  approximating problems:
 \begin{subequations}\label{ApprProto}
 \begin{alignat}{3}
  &\partial_t \cre=\nabla\cdot\left(D_c(\cre,\vre)\nabla \cre
  -\cre\chi(\cre,\vre)\Big ( G_{\varepsilon}( {\cal R}_{r}(\partial_c g(\cre,\vre) \nabla \cre))\right.\nonumber\\
  &\left.\phantom{\partial_t \cre=} + G_{\varepsilon}( {\cal R}_{r}(\partial_v g(\cre,\vre) \nabla \vre))\Big )\right)+f_c(\cre,\vre)&&\text{ in }\R^+\times\Omega,\label{ApprProtoc}\\
  &\partial_t \vre=D_v\Delta \vre+f_v(\cre,\vre)&&\text{ in }\R^+\times\Omega,\label{ApprProtov}\\
  &D_c(\cre,\vre)\nabla \cre
  -\cre\chi(\cre,\vre)\Big ( G_{\varepsilon}( {\cal R}_{r}(\partial_c g(\cre,\vre) \nabla \cre)) \nonumber \\
  &\phantom{\partial_t \cre=}+ G_{\varepsilon}( {\cal R}_{r}(\partial_v g(\cre,\vre) \nabla \vre))\Big )\cdot \nu=D_v \partial_{\nu}\vre=0&&\text{ in }\R^+\times\partial\Omega,\label{ApprProtobc}\\
  &\cre(0,\cdot)=c_0,\ \vre(0,\cdot)=v_0&&\text{ in }\Omega,
 \end{alignat}
 \end{subequations}
where $G_{\varepsilon}$ was defined in \cref{feps}. In order to obtain existence for the original problem, i.e., for $\varepsilon=0$, we first prove existence of nonnegative solutions for the cases when $\varepsilon, D_c>0$. This corresponds to a chemotaxis problem with a nonlocal flux-limited drift. Weak-strong solutions to \cref{ApprProto} are understood as in \cref{DefSol}, with the obvious modification of the weak formulation, which now reads:
\begin{align}
 \left<\partial_t \cre,\varphi\right>_{{(H^1(\Omega))^*,H^1(\Omega)}}=&-\int_{\Omega}{D_c(\cre,\vre)}\nabla \cre\cdot \nabla \varphi \, dx\nonumber\\
 &+ \int_{\Omega}{ \cre\chi(\cre,\vre)} G_{\varepsilon}({\cal R}_{r}(\partial_cg(\cre,\vre)\nabla \cre))\cdot\nabla \varphi\,dx\nonumber\\
   &+\int_{\Omega}{\cre\chi(\cre,\vre)}G_{\varepsilon}({\cal R}_{r} (\partial_vg(\cre,\vre)\nabla\vre)) \cdot \nabla \varphi+f_c(\cre,\vre)\varphi\, dx.\label{weakapprcr}
\end{align}
\begin{Lemma} \label{thmexr}
 Let Assumptions of \cref{thmmainEx} be satisfied.  Assume further that
 \begin{align}
 \varepsilon, D_v>0.\nonumber
  \end{align}
Then there exists a global weak-strong solution to \cref{ApprProto} with  $\partial_t\cre\in L^2(0,T;(H^1(\Omega))^*)$.
 \end{Lemma}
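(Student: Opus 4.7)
The plan is to construct $(c_{r\varepsilon},v_{r\varepsilon})$ via a Schauder fixed-point argument applied to a decoupled, linearised version of \cref{ApprProto}, invoking \cref{LemWPLin} for the $c$-equation and classical linear parabolic theory for the $v$-equation, and then verifying nonnegativity a posteriori. Before setting up the map I would continuously extend $D_c,\chi,g,f_c,f_v$ from $\R_0^+\times\R_0^+$ to $\R^2$ preserving all bounds of \cref{Assump1,Assump2}{\it\cref{Assump2a}} (e.g.\ by composition with $[\,\cdot\,]^+$); in particular the bounds $D_c\geq C_{Dc_{\min}}$, $|c\chi|\leq C_{Q_1}$, $|\partial_c g|\leq C_{Q_2}$ survive unchanged.

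Fix $T>0$, put $X:=L^2(0,T;L^2(\Omega))^2$, and choose a large closed bounded convex $K\subset X$. For $(\bar c,\bar v)\in K$ define $\Phi(\bar c,\bar v):=(c_{r\varepsilon},v_{r\varepsilon})$ in two substeps. First, let $v_{r\varepsilon}$ be the unique weak solution of the linear uniformly parabolic problem $\partial_t v_{r\varepsilon}-D_v\Delta v_{r\varepsilon}=f_v(\bar c,\bar v)$ with homogeneous Neumann data and initial value $v_0\in H^1(\Omega)$; since $D_v>0$ and $f_v(\bar c,\bar v)\in L^2(0,T;L^2(\Omega))$ (Lipschitz $f_v$ with $f_v(\cdot,0)\equiv 0$), maximal $L^2$-regularity provides $v_{r\varepsilon}\in C([0,T];H^1(\Omega))\cap L^2(0,T;H^2(\Omega))$ with $\partial_t v_{r\varepsilon}\in L^2(0,T;L^2(\Omega))$. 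Second, apply \cref{LemWPLin} with coefficients $a_1:=D_c(\bar c,v_{r\varepsilon})$, $a_2:=[\bar c]^+\chi([\bar c]^+,v_{r\varepsilon})$, $a_3:=\partial_c g([\bar c]^+,v_{r\varepsilon})$ and source
\begin{align*}
\left<f,\varphi\right>_{(H^1(\Omega))^*,H^1(\Omega)}:=\int_{\Omega}f_c(\bar c,v_{r\varepsilon})\varphi\,dx-\int_{\Omega}a_2\,G_\varepsilon\bigl(\mathcal{R}_r(\partial_v g([\bar c]^+,v_{r\varepsilon})\nabla v_{r\varepsilon})\bigr)\cdot\nabla\varphi\,dx.
\end{align*}
Conditions \cref{acond1,acond2} are immediate; \cref{acond3} follows from $\|a_2/\sqrt{a_1}\|_\infty\|a_3/\sqrt{a_1}\|_\infty\leq C_{Q_1}C_{Q_2}/C_{Dc_{\min}}$ combined with \cref{Assump3}{\it\cref{Assump3a}}; and $f\in L^2(0,T;(H^1(\Omega))^*)$ since $|G_\varepsilon|\leq 1/\varepsilon$ and $f_c$ is bounded under \cref{Assump2}{\it\cref{Assump2a}}. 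The resulting $c_{r\varepsilon}$ obeys \cref{estbas,estdtc}.

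These estimates are uniform in $(\bar c,\bar v)\in K$, so for $K$ a sufficiently large ball in $X$ one has $\Phi(K)\subset K$, and the Aubin--Lions lemma (using the uniform $L^2(0,T;H^1(\Omega))$-bounds on both components and the $L^2(0,T;(H^1(\Omega))^*)$-bounds on their time derivatives) yields relative compactness of $\Phi(K)$ in $X$. Continuity of $\Phi:X\to X$ is obtained by passing to the limit in the weak formulations along $(\bar c_m,\bar v_m)\to(\bar c,\bar v)$ in $X$: after extraction of an a.e.-convergent subsequence, dominated convergence together with the $L^2$-boundedness of $\mathcal{R}_r$ handles the coefficient and nonlocal terms, while uniqueness of the linear problems removes the need for a subsequence. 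Schauder's theorem then delivers a fixed point.

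It remains to verify $c_{r\varepsilon},v_{r\varepsilon}\geq 0$, whereupon the truncation $[\,\cdot\,]^+$ becomes inert. The $v$-equation is linear uniformly parabolic with $v_0\geq 0$ and $f_v(\cdot,0)\equiv 0$, so the maximum principle gives $v_{r\varepsilon}\geq 0$. For $c_{r\varepsilon}$ the pointwise comparison principle is unavailable because $\mathcal{R}_r$ is genuinely nonlocal; instead I would test the fixed-point equation with $-c_{r\varepsilon}^-$. The drift term is controlled because $c_{r\varepsilon}\chi(c_{r\varepsilon},v_{r\varepsilon})$ vanishes at $c_{r\varepsilon}=0$ and $|G_\varepsilon|\leq 1/\varepsilon$ renders the nonlocal factor $L^\infty$; the source term is controlled because $f_c(0,\cdot)\equiv 0$ with $\nabla_{(c,v)}f_c\in L^\infty$ yields $|f_c(c_{r\varepsilon},v_{r\varepsilon})c_{r\varepsilon}^-|\lesssim(c_{r\varepsilon}^-)^2$; after absorbing $\|\nabla c_{r\varepsilon}^-\|^2$ into the uniformly elliptic diffusion by Young's inequality, Gronwall's lemma together with $c_{r\varepsilon}^-(0)=0$ concludes $c_{r\varepsilon}^-\equiv 0$. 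This nonnegativity step is the main subtlety I anticipate and is precisely where the flux-limiter $\varepsilon>0$ carries essential weight.
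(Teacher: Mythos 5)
Your overall architecture matches the paper's: extend the coefficients beyond $\R^+_0\times\R^+_0$, freeze them in a fixed-point scheme, solve the $v$-equation by parabolic theory and the $c$-equation by \cref{LemWPLin}, get compactness from Lions--Aubin, and prove nonnegativity a posteriori by testing with $-(\cre)_-$ --- and you correctly identified that the flux limiter $G_\varepsilon$ carries the essential weight in that last step, exactly as the paper remarks; your truncation extension $[\,\cdot\,]^+$ is a legitimate, arguably cleaner, alternative to the paper's odd reflection, since it makes the taxis coefficient and $f_c$ vanish outright on $\{\cre<0\}$. But there is a genuine gap at the heart of your continuity argument for $\Phi$. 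Even with frozen coefficients the $c$-equation is \emph{not} linear for $\varepsilon>0$ (only for $\varepsilon=0$, as the paper notes): $G_\varepsilon$ is a nonlinear function of $\mathcal{R}_r(a_3\nabla c)$. Along $(\bar c_m,\bar v_m)\to(\bar c,\bar v)$ the outputs $c_m$ converge strongly only in $L^2(0,T;L^2(\Omega))$, while $\nabla c_m$ converges merely weakly; dominated convergence plus $L^2$-boundedness of $\mathcal{R}_r$ handles the coefficients, but it cannot identify the weak limit of $G_{\varepsilon}(\mathcal{R}_r(a_3^{(m)}\nabla c_m))$, because weak convergence does not commute with the nonlinearity $G_\varepsilon$. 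This is precisely where the paper spends most of its proof: it establishes closedness of $\Phi$ via the Minty--Browder method, using the monotonicity and hemicontinuity of the operators $\mathcal{M}_m$ from \cref{LemMon} to conclude $\eta=\mathcal{M}_\infty(\cre)$. (Alternatively one can test the difference of the equations for $c_m$ and $c$ with $c_m-c$ and absorb via the strong monotonicity estimate \cref{unicoera}; some monotonicity input is indispensable either way, and your sketch contains none.)

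A second, smaller gap is the self-mapping claim. Under \cref{Assump2}{\it\cref{Assump2a}} the function $f_c$ is Lipschitz, not bounded as you assert (though $f_c(0,\cdot)\equiv0$ still gives $|f_c(c,v)|\leq\|\partial_cf_c\|_{L^\infty(\R^+_0\times\R^+_0)}|c|$, so $f\in L^2(0,T;(H^1(\Omega))^*)$ survives); consequently \cref{estbas} yields only an affine bound $\|\Phi(\bar c,\bar v)\|_X\leq C_1(T)+C_2(T)\|(\bar c,\bar v)\|_X$ with a Gronwall-type constant $C_2(T)$ that need not be below $1$ for a given $T$, so no ball of your space $X$ need be invariant and plain Schauder does not apply as stated. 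You would have to work on a small time interval and continue the solution, or do as the paper does: invoke the Leray--Schauder principle, which replaces invariance by a uniform a priori bound on the set $\{c_r:\ c_r=\lambda\Phi(c_r),\ \lambda\in(0,1)\}$, obtained by testing the $\lambda$-equation itself --- where the structural bound \cref{cchi} applies to the actual solution rather than the frozen input.
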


\begin{proof}
To begin with, we extend the coefficients:
\begin{align*}
&D_c(c,v) := D_c(-c,v), \quad (\chi,g,f_c,f_v)(c,v) := -(\chi,g,f_c,f_v) (-c,v)\qquad \text{for }c<0.
\end{align*}
These coefficients still satisfy   \cref{Assump1}, {\it\ref{Assump2}}{\it\cref{Assump2a}}, and {\it\ref{Assump3}}{\it\cref{Assump3a}} if we consider all suprema over $c \in \R$ instead of $c \in \R_0^+$. 

\noindent
Our approach to proving existence is based on the classical Leray-Schauder principle \cite[Chapter 6, \S 6.8, Theorem 6.A]{ZeidlerNFA1}. In order to apply this theorem we first 'freeze' $c_{r\varepsilon}$ in the system coefficients of \cref{ApprProto}, replacing it by  $\bar c_{r\varepsilon}$. Correspondingly, we obtain the following weak formulation in place of \cref{weakapprcr}:  
For all $\varphi\in H^1(\Omega)$ and a.a. $t>0$ 
\begin{subequations}\label{weakcbarc}
\begin{align}
   \left<\partial_t \cre,\varphi\right>_{{(H^1(\Omega))^*,H^1(\Omega)}}=&-\int_{\Omega}D_c(\bar c_{r\varepsilon},\vre)\nabla \cre\cdot \nabla \varphi \, dx\nonumber\\
   &+ \int_{\Omega}\bar c_{r\varepsilon}\chi(\bar c_{r\varepsilon},\vre) G_{\varepsilon}({\cal R}_{r}(\partial_cg(\bar c_{r\varepsilon},\vre)\nabla \cre))\cdot \nabla \varphi\,dx\nonumber\\
   &+\int_{\Omega}\bar c_{r\varepsilon}\chi(\bar c_{r\varepsilon},\vre)G_{\varepsilon}({\cal R}_{r} (\partial_vg(\bar c_{r\varepsilon},\vre)\nabla\vre)) \cdot \nabla \varphi+f_c(\bar c_{r\varepsilon},\vre)\varphi\, dx,\label{weakcrbar}
\end{align}
\begin{align}
 c_{{r\varepsilon}}(0,\cdot)=c_0\qquad\text{in }L^2(\Omega)
\end{align}
and
\begin{alignat}{3}
   &\partial_t \vre=D_v\Delta \vre+f_v(\bar c_{r\varepsilon},\vre)\qquad &&\text{a.e. in }(0,T)\times\Omega,\label{weakvrbar}\\
  &D_v \partial_{\nu} \vre=0&&\text{a.e. in }(0,T)\times\partial\Omega,\\
  &\vre(0,\cdot)=v_0\qquad&&\text{in }H^1(\Omega).  \label{decini}                                                     \end{alignat}
\end{subequations}
Let $T>0$ and let $\bar c_{r\varepsilon}\in L^2(0,T;L^2(\Omega))$. 
Since $f_v$ is assumed to be Lipschitz, we can make use of the standard  theory \cite{LSU} which implies that the semilinear parabolic initial boundary value problem \eqref{weakvrbar}-\eqref{decini} 
possesses a unique global strong solution $0\leq  \vre \in L^2(0,T;H^2(\Omega))$ with $\partial_t  \vre \in L^2(0,T;L^2(\Omega))$,  and satisfying the estimate
  \begin{align}
   \|{ \vre}\|_{L^{\infty}(0,T;{ H^1}(\Omega))}^2+{\|{ \vre}\|_{L^2(0,T;H^2(\Omega))}^2+\|\partial_t { \vre}\|_{L^2(0,T;L^2(\Omega))}^2}
   \leq &{\Cl{C13}(T)}\left(\|v_0\|_{{ H^1}(\Omega)}^2+\| { \bar c_{r\varepsilon}}\|_{L^2(0,T;{ L^2}(\Omega))}^2\right).\label{apriori1}
  \end{align}
  Here and further in the proof we omit the dependence of constants upon $D_v$. 
Set
\begin{align} 
 &\Cr{a1}:=D_c( \bar c_{r\varepsilon},\vre),\qquad
 \Cr{a2}:=\bar c_{r\varepsilon}\chi( \bar c_{r\varepsilon},\vre),\qquad
 \Cr{a3}:=\partial_cg( \bar c_{r\varepsilon},\vre),\nonumber\\
 &\left<f,\varphi\right>_{{(H^1(\Omega))^*,H^1(\Omega)}}:=\int_{\Omega}{\bar c_{r\varepsilon}\chi(\bar c_{r\varepsilon},\vre)} G_{\varepsilon}({\cal R}_{r} (\partial_vg(\bar c_{r\varepsilon},\vre)\nabla\vre)) \cdot \nabla \varphi+f_c(\bar c_{r\varepsilon},\vre)\varphi\, dx.\nonumber
\end{align}
Due to our assumptions about $D_c,\chi,g$, and $f_c$, { these coefficients   $a_i$ and $f$}  satisfy the requirements of 
 \cref{LemMon}. Consequently, there exists a unique global weak solution $c_{r\eps}$ to  problem \eqref{modelLin} with these coefficients.
 We  estimate for the corresponding constants $\alpha_r$ { and} $M_r$ introduced in {\cref{LemMon}}:
 \begin{align}
  \alpha_{r}\geq&\Cr{Dcmin}\Cr{Cgstr}{(r)}=:\Cl{alpha_}{(r)},\label{estalpha}\\
  M_r\leq&\Cr{Dcmax}+\Cr{Q1}\Cr{Q2}\left\|{\cal R}_{r}\right\|_{L((L^2(\Omega))^n)}=:\Cl{Mr_}{(r)}, \label{estmr}
 \end{align}
and, due to \cref{apriori1}, 
\begin{align}
 \|f\|_{L^2(0,T;(H^1(\Omega))^*)}
 \leq &\|\nabla {\vre}\|_{L^2(0,T;(L^2(\Omega))^n)}{||\partial_v g||_{L^{\infty}(\R^+_0\times\R^+_0)}}\left\|{\cal R}_{r}\right\|_{ L(L^2(\Omega))^n)}\Cr{Q1}\nonumber\\
 &+\|\partial_c f_c\|_{L^{\infty}(\R^+_0\times\R^+_0)}\left({\|{\vre}\|_{L^2(0,T;L^2(\Omega))}}+\| \bar c_{r\varepsilon}\|_{L^2(0,T;L^2(\Omega))}\right)\nonumber\\
 {\leq}&\Cl{f_}({r,}T)\left(1+\| { \bar c_{r\varepsilon}}\|_{L^2(0,T;L^2 (\Omega))}\right).\label{estf}
\end{align}
 Combining \cref{estbas}-\cref{estdtc} and  \cref{estalpha}-\cref{estf}, we obtain the following bounds for $c_{ r\eps}$:
  \begin{align}
  &\|{\cre}\|_{ C({ [}0,T{ ]};L^2(\Omega))}^2+\alpha_{r}\|\nabla { \cre}\|_{L^2(0,T;L^2(\Omega))}^2\leq \C{({r,}T)}\left(1+\| { \bar c_{r\varepsilon}}\|_{L^2(0,T;L^2(\Omega))}^2\right),\label{estbas_}\\
  &\|\partial_t {\cre}\|_{L^2(0,T;(H^1(\Omega))^*)}^2\leq \C{({r,}T)}\left(1+\| { \bar c_{r\varepsilon}}\|_{L^2(0,T;L^2(\Omega))}^2\right).\label{estdtc_}
 \end{align}
 Now  consider the mapping
 \begin{align*}
 \Phi :{\bar c_{r\varepsilon}}\mapsto \cre.
 \end{align*}
Thanks to \cref{estbas_} and \cref{estdtc_}, $\Phi$ is well-defined in  $L^2(0,T;L^2(\Omega))$ and 
 \begin{align}
  &\Phi:L^2(0,T;L^2(\Omega))\rightarrow \{ u\in L^2(0,T;H^1(\Omega)):\ \partial_t  u\in L^2(0,T;(H^1(\Omega))^*)\} \nonumber\\
  &\text{maps bounded sets on bounded sets}.\label{Pbndonbnd}
 \end{align}
Due to the Lions-Aubin lemma, \cref{Pbndonbnd} implies that \begin{align}
\Phi:L^2(0,T;L^2(\Omega))\rightarrow L^2(0,T;L^2(\Omega))\text{ maps bounded sets on precompact sets}. \label{Pcomp}                                                         \end{align}
Next, we verify that $\Phi$ is closed in $L^2(0,T;L^2(\Omega))$. Consider a sequence $\{ \bar c_{r\varepsilon m}\}\subset L^2(0,T;L^2(\Omega))$ s.t.
\begin{align}
 &{\bar c_{r\varepsilon m}}\underset{m\rightarrow\infty}{\rightarrow} {\bar c_{r\varepsilon}}\qquad \text{in }L^2(0,T;L^2(\Omega)),\label{convbarcm}\\
 \Phi({\bar c_{r\varepsilon m}})=:&{ \crem}\underset{m\rightarrow\infty}{\rightarrow}{\cre}\qquad \text{in }L^2(0,T;L^2(\Omega)).\label{convcm}
\end{align}
We need to check  that
\begin{align}
 \Phi( {\bar c_{r\varepsilon}}) = {\cre}.\nonumber
\end{align}
Due to  
\cref{convbarcm} we have  (by switching to a subsequence, if necessary) that
\begin{align}
 {\bar c_{r\varepsilon m}}\underset{m\rightarrow\infty}{\rightarrow} {\bar c_{r\varepsilon}}\qquad\text{a.e.}\label{convbarcmae}
\end{align}
Further, \cref{Pbndonbnd} and  \cref{convcm} together with the Banach-Alaoglu theorem imply that
\begin{alignat}{3}
 & {{ \crem}\underset{m\rightarrow\infty}{\rightharpoonup}  { \cre}}&&\qquad {\text{in }L^2(0,T;H^1(\Omega))}, \label{convnabbarcm} \\
 &\partial_t { \crem}\underset{m\rightarrow\infty}{\rightharpoonup} \partial_t{ \cre}&&\qquad \text{in }L^2(0,T;(H^1(\Omega))^*). \label{convnabtbarcm}
\end{alignat}
\noindent
By the definition of $\Phi$ we have that  $\bar c_{r\varepsilon m}$ and $\crem $ satisfy: for all $\varphi\in H^1(\Omega)$ and a.a. $t\in(0,T)$ 
\begin{subequations}\label{weakcbarcn}
\begin{align}
   \left<\partial_t {\crem},\varphi\right>_{{(H^1(\Omega))^*,H^1(\Omega)}}=&-\int_{\Omega}{ D_c({ \bar c_{r\varepsilon m}},{ \vrem})}\nabla {\crem}\cdot \nabla \varphi \, dx \nonumber\\
    &+ \int_{\Omega}{ \bar c_{r\varepsilon m}\chi(\bar c_{r\varepsilon m},\vrem )} { G_{\varepsilon}({\cal R}_{r}(\partial_cg(\bar c_{r\varepsilon m},\vrem)\nabla \crem))\cdot\nabla \varphi)}\,dx\nonumber\\
   &+\int_{\Omega}{ \bar c_{r\varepsilon m}\chi(\bar c_{r\varepsilon m},\vrem )}{ G_{\varepsilon}({\cal R}_{r} (\partial_vg(\bar c_{r\varepsilon m},\vrem)\nabla\vrem)) \cdot \nabla \varphi}\nonumber\\
   &+f_c({ \bar c_{r\varepsilon m},\vrem})\varphi\, dx,\label{weakcrbarn}
\end{align}
\begin{align}
 c_{{r\varepsilon m}}(0,\cdot)=c_0\qquad\text{in }L^2(\Omega)
\end{align}
and
\begin{alignat}{3}
  &{\partial_t \vrem=D_v\Delta \vrem+f_v( \bar c_{r\varepsilon m},\vrem)}\qquad&&{ \text{a.e. in }(0,T)\times\Omega},\label{weakvrbarn}\\
  &{ D_v \partial_{\nu} \vrem=0\qquad}&&{\text{a.e. in }(0,T)\times\partial\Omega},\\
&{\vrem}(0,\cdot)=v_0\qquad&&\text{in }H^1(\Omega).  \label{decinin}                                                     \end{alignat}
\end{subequations}
From \cref{apriori1} and \cref{convbarcm} we conclude that the sequence $\{ { \vrem} \}$ is uniformly bounded in $L^2(0,T;H^{2}(\Omega))$ and $\partial_t  \vrem \in L^2(0,T;(L^2(\Omega))$. Hence  
the Lions-Aubin lemma and the Banach-Alaoglu theorem imply that there exists $\vre$ s.t. (after switching to a subsequence, if necessary)
\begin{align}
{\vrem} &\underset{m\rightarrow\infty}{\rightharpoonup} {\vre} \qquad \text{in }L^2(0,T;{ H^2}(\Omega)),\nonumber\\
\partial_t {\vrem}&\underset{m\rightarrow\infty}{\rightharpoonup} \partial_t{ \vre}\quad \text{in }L^2(0,T;{ L^2(\Omega)}),\nonumber\\
{\vrem}&\underset{m\rightarrow\infty}{\rightarrow}{ \vre} \qquad \text{in }L^2(0,T;{ H^1}(\Omega)) {\text{ and a.e. in } (0,T) \times \Omega}, \label{convvl2}
\end{align}
and this ${\vre}$ satisfies equation \cref{weakvrbar} for $\bar c_{r\varepsilon }$ as well as the initial and boundary conditions in the required sense.

\noindent
Further, due to \cref{convnabbarcm,convnabtbarcm} we have in the usual way that
\begin{align}
 \crem (t,\cdot) \underset{m \rightarrow \infty}{\rightharpoonup} \cre (t,\cdot)\qquad \text{in } L^2(\Omega)\qquad\text{ for all }t>0.\label{cret}
\end{align}
In particular,
\begin{align}
 \crem (0,\cdot)=c_0,\nonumber
\end{align}
i.e. the initial condition is satisfied.

\noindent
It remains now to pass to the limit in \cref{weakcrbarn}. For this purpose we use the Minty-Browder method. 
To shorten the notation, we introduce 
 for $m \in \N \cup \{\infty \}$
\begin{align*}
&\left< {\cal M}_m(u), \varphi \right>_{{ L^2(0,T;(H^1(\Omega))^*),L^2(0,T;H^1(\Omega))}}\nonumber\\
:=& \int_0^T\int_{\Omega} D_c(\bar c_{r\varepsilon m}, \vrem) \nabla u\cdot\nabla \varphi- G_{\varepsilon}({\cal R}_r(\partial_c g (\bar c_{r\varepsilon m}, \vrem ) \nabla u)) \bar c_{r\varepsilon m} \chi (\bar c_{r\varepsilon m}, \vrem)\cdot\nabla \varphi \, dxdt,
\\
&\left< f_m, \varphi \right>_{{ L^2(0,T;(H^1(\Omega))^*),L^2(0,T;H^1(\Omega))}}\nonumber\\
:=&\int_0^T\int_{\Omega}{\bar c_{r\varepsilon}\chi(\bar c_{r\varepsilon m},\vrem )}{G_{\varepsilon}({\cal R}_{r} (\partial_vg(\bar c_{r\varepsilon m},\vrem)\nabla\vrem)) \cdot \nabla \psi}+f_c({ \bar c_{r\varepsilon m},\vrem})\psi\, dxdt,
\end{align*}
where \begin{align}
\bar c_{r\varepsilon \infty} := \bar c_{r\varepsilon},\qquad \bar v_{r\varepsilon \infty} := \bar v_{r\varepsilon}. \nonumber                                                                                                                        \end{align}
Due to \cref{LemMon}{\it \cref{LemMonii}} and \cref{estmr} 
each operator ${\cal M}_m$ is monotone, hemicontinuous, and satisfies 
\begin{align*}
||{\cal M}_{m}(\crem)||_{L^2(0,T;(H^1(\Omega))^*)} \leq \Cr{Mr_}{(r)}\| \crem\|_{L^2(0,T;H^1(\Omega))} \leq \Cl{upcrem}{(r)}.
\end{align*} 
 Consequently, there is $\eta \in L^2(0,T;(H^1(\Omega))^*)$ s.t.
\begin{align} 
{\cal M}_m(\crem) \rightharpoonup \eta \text{ in } L^2(0,T;(H^1(\Omega))^*).\label{konvmm}
\end{align}
Next, from \cref{convbarcmae} and \cref{convvl2} we conclude using the boundedness and continuity of functions $G_{\varepsilon},\nabla g,\nabla f_c$, and $(c,v)\mapsto c\chi(c,v)$ over $\R\times\R^+_0$ and  of operator ${\cal R}_{r}$ in $L^2(\Omega)$  and the dominated convergence theorem that  
\begin{align} 
 f_m\underset{m \rightarrow \infty}{\rightarrow}f_{\infty}\qquad \text{in }L^2(0,T;(H^1(\Omega))^*).\label{konvt2}
\end{align}
 A similar argument  yields 
\begin{align}
  &{\cal M}_m(w)\underset{m \rightarrow \infty}{\rightarrow}{\cal M}_{\infty}(w),\qquad \text{in }L^2(0,T;(H^1(\Omega))^*)\nonumber
\end{align}
so that due to  \cref{convnabbarcm} and the  compensated compactness
\begin{align}
 &\left< {\cal M}_m(w),\crem\right>_{{ L^2(0,T;(H^1(\Omega))^*),L^2(0,T;H^1(\Omega))}}\underset{m \rightarrow \infty}{\rightarrow}\left< {\cal M}_{\infty}(w),\cre\right>_{{ L^2(0,T;(H^1(\Omega))^*),L^2(0,T;H^1(\Omega))}}.\nonumber
\end{align}
Observe that the weak formulation \cref{weakcrbarn} is equivalent to 
\begin{align}
 \partial_t \crem=  -{\cal M}_m(\crem)+f_m\qquad\text{in }(H^1(\Omega))^*. \label{weakcsimple}
\end{align}
Combining
\cref{convnabtbarcm,konvmm,konvt2}
we can pass to the weak limit in \cref{weakcsimple}
and obtain
\begin{align}
 \partial_t \cre 
 =  -\eta+f_{\infty}\qquad\text{in }(H^1(\Omega))^*. \label{eqetacre}
\end{align}
For $w \in L^2(0,T;H^1(\Omega))$ and $m\in\N$ we have due to the monotonicity of ${\cal M}_m$ that
\begin{align} 
X_m := \left< {\cal M}_m (\crem)-{\cal M}_m (w), \crem-w\right>_{{(H^1(\Omega))^*,H^1(\Omega)}}\geq 0.\label{xmg0}
\end{align}
Moreover, setting $\varphi = \crem$ in \cref{weakcbarcn} and inserting the obtained term into the definition of $X_m$, we conclude that
\begin{align} 
X_m =& - \left< {\cal M}_m ( \crem), w\right>_{{ L^2(0,T;(H^1(\Omega))^*),L^2(0,T;H^1(\Omega))}} - \left< {\cal M}_m ( w), \crem-w\right>_{{ L^2(0,T;(H^1(\Omega))^*),L^2(0,T;H^1(\Omega))}}\nonumber\\
&+ \frac{1}{2}\| c_0 \|_{L^2(\Omega)}^2-\frac{1}{2}\| \crem (T) \|_{L^2(\Omega)}^2 +\left< f_m, \crem \right>_{{ L^2(0,T;(H^1(\Omega))^*),L^2(0,T;H^1(\Omega))}}.\label{konvmmw}
\end{align}
Combining \cref{cret} for $t=T$,  \cref{konvmm,xmg0,konvmmw,convcm,convnabbarcm}, we obtain
\begin{align*}
0 \leq \limsup_{m \rightarrow \infty} X_m \leq& -  \left< \eta, w\right>_{{ L^2(0,T;(H^1(\Omega))^*),L^2(0,T;H^1(\Omega))}} - \left< {\cal M}_{\infty} (w), \cre-w\right>_{{ L^2(0,T;(H^1(\Omega))^*),L^2(0,T;H^1(\Omega))}}\nonumber\\
&+ \frac{1}{2}\| c_0 \|_{L^2(\Omega)}^2-\frac{1}{2}\| \cre (T) \|_{L^2(\Omega)}^2+\left< f_{\infty}, \cre \right>_{{ L^2(0,T;(H^1(\Omega))^*),L^2(0,T;H^1(\Omega))}}.
\end{align*}
As $\cre$ satisfies \cref{eqetacre}, it follows from the last equation that for all $w \in L^2(0,T;H^1(\Omega))$ it holds that 
\begin{align*}
0 \leq \left< \eta-{\cal M}_{\infty}(w), \cre-w \right>_{{ L^2(0,T;(H^1(\Omega))^*),L^2(0,T;H^1(\Omega))}} .
\end{align*}

\noindent
Since ${\cal M}_{\infty}$ is monotone and hemicontinuous, Minty's lemma implies that it is maximal monotone. Consequently, $\eta={\cal M}_{\infty}(\cre)$.  

\noindent
Altogether, we conclude that $(\cre,\vre)$ satisfies \cref{weakcbarc} for $\bar c_{r\varepsilon}$, meaning that  $\Phi( \bar c_{r\varepsilon}) = \cre $ holds, i.e. $\Phi$ is a closed operator. Together with \cref{Pcomp}, this implies that
\begin{align}
\Phi: L^2(0,T; L^2(\Omega)) \rightarrow  L^2(0,T; L^2(\Omega)) \text{ is a compact operator.}
\end{align}  
Since we aim to apply the Leray-Schauder principle \cite[Chapter 6, \S 6.8, Theorem 6.A]{ZeidlerNFA1}, 
it is necessary to consider for $\lambda\in(0,1)$  the system which corresponds to $c_r=\lambda \Phi(c_r)$. The corresponding weak-strong formulation reads:
\begin{subequations}\label{FPweakcbarc}
\begin{align}
   \left<\partial_t \cre,\varphi\right>_{{(H^1(\Omega))^*,H^1(\Omega)}}=&-\int_{\Omega}D_c( c_{r\varepsilon},\vre)\nabla \cre\cdot \nabla \varphi \, dx\nonumber\\
   &+ \int_{\Omega} c_{r\varepsilon}\chi( c_{r\varepsilon},\vre)\lambda G_{\varepsilon}(\lambda^{-1}{\cal R}_{r}(\partial_cg( c_{r\varepsilon},\vre)\nabla \cre))\cdot  \nabla \varphi\,dx\nonumber\\
   &+\lambda\int_{\Omega}G_{\varepsilon}({\cal R}_{r} (\partial_vg( c_{r\varepsilon},\vre)\nabla\vre)) \cdot  c_{r\varepsilon}\chi( c_{r\varepsilon},\vre)\nabla \varphi+f_c( c_{r\varepsilon},\vre)\varphi\,\  dx,\label{FPweakcrbar}
  \end{align}
  \begin{align}
 c_{{r\varepsilon}}(0,\cdot)=\lambda c_0\qquad\text{in }L^2(\Omega)
\end{align}
and
 \begin{alignat}{3}
  &{\partial_t \vre=D_v\Delta \vre+f_v( c_{r\varepsilon},\vre)}\qquad &&{ \text{a.e. in }(0,T)\times\Omega},\label{FPweakvrbar}\\
  &{D_v \partial_{\nu} \vre=0} \qquad &&{ \text{a.e. in }(0,T)\times\partial\Omega},\\
&{\vre}(0,\cdot)=v_0\qquad &&\text{in }H^1(\Omega).  \label{FPdecini}                                                   \end{alignat}
\end{subequations}

\noindent
Taking $\varphi := \cre$ in \cref{FPweakcbarc} and estimating the right-hand side by using \cref{Assump1,Assump3}{\it\cref{Assump3a}}, the H\"older inequality, and the fact that $|G_{\varepsilon}(x)| \leq |x|$, we obtain that
\begin{align*}
&\frac{1}{2} \frac{d}{dt} ||\cre ||^2_{L^2(\Omega)} \nonumber\\
\leq&-{ \Cr{Dcmin}} \Cr{Cgstr}{ (\|\mathcal{R}_r \|)} \left\| \nabla { \cre }\right\|_{ (L^2(\Omega))^n}^ 2\\
&+ \lambda \left(\Cr{Q1} { ||\partial_vg||_{L^{\infty}(\R_0^+\times \R_0^+)}} \|\mathcal{R}_r\|_{L((L^2(\Omega))^n)} \left\| \nabla { \cre }\right\|_{(L^2(\Omega))^n} ||\nabla { \vre } ||_{(L^2(\Omega))^n}
{ + { \|\partial_c f_c\|_{L^{\infty}(\R_0^+\times \R_0^+)}} ||{ \cre }||^2_{L^2(\Omega)}} \right) \nonumber \\
\leq&-{ \Cr{Dcmin}} \Cr{Cgstr}{ (\|\mathcal{R}_r \|)} \left\| \nabla { \cre }\right\|_{(L^2(\Omega))^n}^2\\
&+ \Cr{Q1} { ||\partial_vg||_{L^{\infty}(\R_0^+\times \R_0^+)}} \|\mathcal{R}_r\|_{L((L^2(\Omega))^n)} \left\|\nabla { \cre }\right\|_{(L^2(\Omega))^n} ||\nabla { \vre } ||_{(L^2(\Omega))^n} 
{ + { \|\partial_c f_c\|_{L^{\infty}(\R_0^+\times \R_0^+)}}||{ \vre }||^2_{L^2(\Omega)}} 
\end{align*}
holds for a.e. $t\in (0,T)$. Further, performing estimates similar to the proof of  \cref{DthmmainEx} below and using \cref{apriori1}, we conclude that the set
\begin{align}
\left\{c_r \in L^2(0,T;L^2(\Omega)): \, c_r = \lambda \Phi(c_r) \text{ for } \lambda \in (0,1)\right\}\nonumber
\end{align}
is uniformly bounded. Consequently, for all $\varepsilon \in (0,1)$ the Leray-Schauder principle implies that $\Phi$ has a fixed point $\cre$,  which together with the corresponding $\vre$, satisfies \cref{ApprProto} in the weak-strong sense on the interval $[0,T]$. Since $T>0$ was arbitrary, the standard prolongation argument yields the existence of a global solution.

\noindent
It remains to check that $\cre$  is nonnegative. Taking $\varphi :=-(\cre )_- =\min \{\cre,0\}$ in \cref{weakapprcr} and using $f_c(0,\cdot)\equiv0$, the boundedness of $G_{\varepsilon},D_c,\partial_c f_c$, and $(c,v)\mapsto c\chi(c,v)$, along with the H\"older and Young inequalities, yields
\begin{align*}
   &\frac{1}{2}\frac{d}{dt}\|(\cre)_-\|^2_{L^2(\Omega)}\\
=&-\int_{\Omega} D_c(-(\cre)_-,\vre)\left|\nabla (\cre)_- \right|^2 \, dx  - \int_{\Omega} G_{\varepsilon}({\cal R}_{r}(\partial_cg(\cre,\vre)\nabla \cre))\cdot (\cre)_-\chi(-(\cre)_-,\vre)\nabla (\cre)_-\,dx\nonumber\\
   &-\int_{\Omega}G_{\varepsilon}({\cal R}_{r} (\partial_vg(\cre,\vre)\nabla\vre)) \cdot (\cre)_- \chi( -(\cre)_-,\vre)\nabla (\cre)_- \, dx+ \int _\Om f_c( -(\cre)_-,\vre)(\cre)_-\, dx\\
\leq&-\Cr{Dcmin} \|\nabla (\cre)_- \|_{(L^2(\Omega))^n}^2 + \frac{2}{\varepsilon}{\Cr{Q1}} \|(\cre)_-\|_{L^2(\Omega)} \|\nabla (\cre)_-\|_{(L^2(\Omega))^n} + \| \partial_c f_c\|_{L^{\infty}(\R_0^+ \times \R_0^+)} \| (\cre)_-\|_{L^2(\Omega)}^2\\
\leq& \Cl{conscm} \| (\cre)_-\|_{L^2(\Omega)}^2.
\end{align*}
Since $\cre(0,\cdot)=c_0 \geq 0$,
 the Gronwall inequality implies that $(\cre)_{-} = 0$, i.e. that $\cre\geq0$.
\end{proof}
\begin{Remark}
 Observe that $\cre$ cannot be replaced by $-(\cre)_-$ inside the nonlocal operator. This is why we introduced  the flux-limitation.
\end{Remark}

\noindent
Now we are ready to prove \cref{thmmainEx}.
\begin{proof} {\it (of \cref{thmmainEx}). }
We start with the case $$D_v >0.$$ \cref{thmexr} gives the existence of solutions $(\cre, \vre)$ to \cref{ApprProto}. Setting $\varphi = \cre$ in \cref{weakapprcr},  using the facts that $f_c$ is Lipschitz and $|G_{\varepsilon}(x)| \leq |x|$, we can estimate similarly to \cref{DthmmainEx} below and obtain upper bounds of the form \cref{boundcr}-\cref{boundintc1s},  which are independent from $\varepsilon$ (with $p=q=2$ there). Applying the Lions-Aubin lemma and the Banach-Alaoglu theorem, we conclude the existence of a pair of nonnegative functions $c_r$ and $v_r$ having the regularity stated in  \cref{DefSol} and such that for a sequence $\varepsilon_m \underset{m \rightarrow \infty}{\rightarrow} 0$ it holds that
\begin{align}
\crel &\underset{m \rightarrow \infty}{\rightarrow} c_r \text{ in } L^2(0,T;L^2(\Omega)) \text{ and a.e. in } (0,T) \times \Omega , \label{crell2ae}\\
\vrel &\underset{m \rightarrow \infty}{\rightarrow} v_r \text{ in } L^2(0,T;H^1(\Omega)) \text{ and a.e. in } (0,T) \times \Omega \label{vrell2ae}, \\
\crel &\underset{m \rightarrow \infty}{\rightharpoonup} c_r \text{ in } L^2(0,T;H^1(\Omega)). \label{crelh1}
\end{align}
\noindent
Consider an arbitrary measurable set $E \subset (0,T) \times \Omega$.  Using $G_{\varepsilon}(x)-x = -\varepsilon \frac{x|x|}{1+\varepsilon |x|}$, we can estimate for every component $i \in \{1,\dots,n\}$:
\begin{align*}
&\left| \int_E \left( { G}_{\varepsilon _m}({\cal R}_{r}(\partial_cg(\crel,\vrel)\nabla \crel))-{\cal R}_{r}(\partial_cg(\crel,\vrel)\nabla \crel) \right)_i  \, dx \, dt \right|\\
\leq& \varepsilon_m \int_0^T \int_{\Omega} \left| {\cal R}_{r}(\partial_cg(\crel,\vrel)\nabla \crel \right|^2 \, dx \, dt \\
\leq& \varepsilon_m \| {\cal R}_r \|_{L((L^2(\Omega))^n)}  \Cr{Q2} \|\nabla \crel  \|_{L^2(0,T;(L^2(\Omega))^n)}^2,
\end{align*}
where the last term tends to 0 as $\varepsilon _m \underset{m \rightarrow \infty}{\rightarrow} 0$. As the term inside the integral is moreover bounded in $L^2(0,T;L^2(\Omega))$ by a constant independent from $\varepsilon _m$, we conclude  by using a result from \cite[p. 6]{Evans1990}  that
\begin{align*}
{ G}_{\varepsilon _m}({\cal R}_{r}(\partial_cg(\crel,\vrel)\nabla \crel))-{\cal R}_{r}(\partial_cg(\crel,\vrel)\nabla \crel) \underset{m \rightarrow \infty}{\rightharpoonup} 0 \text{ in } L^2(0,T;(L^2(\Omega))^n).
\end{align*}
From this and the boundedness of $\|\nabla \crel \|_{L^2(0,T;(L^2(\Omega))^n)}$, \cref{crell2ae}-\cref{crelh1}, \cref{TWellDef} or \ref{SWellDef} {\it (i)} and {\it (ii)}, respectively, the fact that $|G_{\varepsilon}(x)| \leq |x|$, the continuity of $\partial_c g, \chi$, \cref{cchi}, \cref{pcgb}, compensated compactness, the dominated convergence theorem, and the H\"older inequality, we obtain that for all $\psi \in L^2(0,T;H^1(\Omega))$ it holds  that
\begin{align*}
&\int_0^T \int_{\Omega} { G}_{\varepsilon _m}({\cal R}_{r}(\partial_cg(\crel,\vrel)\nabla \crel))\cdot\crel\chi(\crel,\vrel) \nabla \psi\,dx \, dt\\
\underset{m \rightarrow \infty}{\rightarrow}& \int_0^T \int_{\Omega} {\cal R}_{r}(\partial_cg(c_r,v_r)\nabla c_r)\cdot c_r\chi(c_r,v_r) \nabla \psi\,dx \, dt.
\end{align*}
The convergence to the remaining terms in \cref{weakcr} and the rest of \cref{weakcvr} can be obtained in a way either completely analogous or very similar to the corresponding parts of the proof of   \cref{thmexr}.

\noindent
In order to prove existence for  the case $$D_v = 0$$ consider a family of solutions  $(c_{rD_v}, v_{rD_v})$ corresponding to $D_v \in (0,1)$. Estimating similarly to the proof of \cref{DthmmainEx} below and performing a standard limit procedure based on the Banach-Alaoglu theorem, the dominated convergence theorem, the Lions lemma \cite[Lemma 1.3]{Lions}, and the compensated compactness, one readily obtains a solution $(c_{r0},v_{r0})$ for $D_v = 0$ in the sense of \cref{DefSol}. Observe that this time the gradient of $v$-component enters linearly, so that no strong convergence is required. We omit further details.
\end{proof}

\subsection{Global existence of solutions to  \texorpdfstring{\cref{nlProto}}{}:  the case of \texorpdfstring{$f_c$}{} dissipative}\label{apriori}

\noindent
In this Subsection we provide an extension of the existence \cref{thmmainEx} from \cref{SecEx}:
\begin{Theorem} \label{DthmmainEx}
Let  \cref{Assf,Assump1,Assump2}{\it\cref{Assump2b}} hold and let $r$ satisfy \cref{Assump3}{\it\cref{Assump3a}}. Set  \begin{align}
q := \min\left\{2, \frac{s+1}{s}\right\}.\qquad q^*:=\frac{q}{q-1}.  \label{qq}                                                                                                                                                                                                                        \end{align}
Then there exists a global weak-strong solution to \cref{nlProto} in terms of \cref{DefSol}, with \\
$\partial_t c_r\in L^q(0,T;(W^{1,q^*}(\Omega))^*)$ and satisfying the following estimates: For all $T>0$
 \begin{align}
||c_r||_{L^{\infty}(0,T;L^2(\Omega))} &\leq \Cl{Curnew}(T,\|\mathcal{R}_r\|_{L(\Ltwon)}), \label{boundcr}\\
||\nabla c_r||_{L^2(0,T;(L^2(\Omega))^n)} &\leq \Cr{Curnew}(T,\|\mathcal{R}_r\|_{L(\Ltwon)}),\label{boundncr}\\
||\partial_t c_r||_{L^q(0,T;(W^{1,q^*}(\Omega))^*)} &\leq \Cr{Curnew}(T,\|\mathcal{R}_r\|_{L(\Ltwon)}),\label{bndptcr}\\
||v_r||_{L^{\infty}(0,T;L^2(\Omega))} &\leq \Cr{Curnew}(T,\|\mathcal{R}_r\|_{L(\Ltwon)}),\label{estvr}\\
||\nabla v_r||_{L^{\infty}(0,T;(L^2(\Omega))^n)} &\leq \Cr{Curnew}(T,\|\mathcal{R}_r\|_{L(\Ltwon)}),\label{boundnvr}\\
||\partial_t v_r||_{L^2(0,T;L^2(\Omega))} &\leq \Cr{Curnew}(T,\|\mathcal{R}_r\|_{L(\Ltwon)}),\label{bounddtvr}\\
\| f_c(c_r, v_r)\|_{L^q(0,T;L^q(\Omega))}  &\leq \Cr{Curnew}(T,\|\mathcal{R}_r\|_{L(\Ltwon)}), \label{boundintc1s}\\
\| f_v(c_r, v_r)\|_{L^2(0,T;L^2(\Omega))} &\leq \Cr{Curnew}(T,\|\mathcal{R}_r\|_{L(\Ltwon)}).\label{bnfintf_v}
\end{align}
\end{Theorem}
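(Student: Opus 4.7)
The strategy is to approximate $f_c$ by a sequence $f_c^k$ of globally Lipschitz functions to which \cref{thmmainEx} applies directly, derive a priori estimates of the form \cref{boundcr}--\cref{bnfintf_v} that are uniform in $k$, and then pass to the limit $k\to\infty$. A convenient truncation is $f_c^k(c,v):=f_c(T_k(c),T_k(v))$ with $T_k(z):=\sign(z)\min\{|z|,k\}$: each $f_c^k$ is globally Lipschitz in both arguments, satisfies $f_c^k(0,\cdot)\equiv 0$, and the two inequalities in \cref{Assump2}{\it\cref{Assump2b}} hold with constants independent of $k$ (indeed the truncation only shrinks the absolute value of the argument). \cref{thmmainEx} then provides nonnegative weak-strong solutions $(c_r^k,v_r^k)$ with the regularity listed there and $\partial_t c_r^k\in L^2(0,T;(H^1(\Omega))^*)$.

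The main analytical step is the derivation of $k$-uniform estimates. Testing \cref{weakcr} with $\varphi=c_r^k$, splitting $\nabla g(c_r^k,v_r^k)=\partial_c g\,\nabla c_r^k+\partial_v g\,\nabla v_r^k$ inside $\mathcal{R}_r$ (valid by linearity), using the coercivity argument from \cref{unicoera} together with \cref{Assump3}{\it\cref{Assump3a}} to absorb the $\partial_c g$-contribution on the left, Young's inequality on the $\partial_v g$-contribution, and the dissipativity $c f_c^k(c,v)\leq \Cr{Dis1}-\Cr{Dis2}\, c^{s+1}$, I obtain a differential inequality of the shape
\begin{align*}
\tfrac{d}{dt}\|c_r^k\|_{L^2(\Omega)}^2+C\|\nabla c_r^k\|_{(L^2(\Omega))^n}^2+C\|c_r^k\|_{L^{s+1}(\Omega)}^{s+1}\leq C\bigl(1+\|\nabla v_r^k\|_{(L^2(\Omega))^n}^2+\|c_r^k\|_{L^2(\Omega)}^2\bigr).
\end{align*}
In parallel, standard parabolic regularity applied to \cref{weakvr}, together with $f_v$ Lipschitz and $f_v(\cdot,0)\equiv 0$, controls $\|v_r^k\|_{L^\infty(0,T;H^1)}$, $\|D_v v_r^k\|_{L^2(0,T;H^2)}$, and $\|\partial_t v_r^k\|_{L^2(0,T;L^2)}$ in terms of $\|c_r^k\|_{L^2(0,T;L^2)}$ and $\|v_0\|_{H^1}$. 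Gronwall's inequality then closes the loop and produces \cref{boundcr}, \cref{boundncr} together with the uniform bound $\|c_r^k\|_{L^{s+1}(0,T;L^{s+1})}\leq C$, as well as \cref{estvr}--\cref{bounddtvr}. Plugging the $L^{s+1}$-bound into $|f_c^k|^q\leq C(1+|c_r^k|^{sq})$ and noting that the choice of $q$ in \cref{qq} ensures $sq\leq s+1$ yields \cref{boundintc1s}; \cref{bnfintf_v} is immediate. Finally, \cref{bndptcr} follows by testing the weak formulation with $\varphi\in W^{1,q^*}(\Omega)$, invoking $\|\mathcal{R}_r\|_{L((L^2(\Omega))^n)}<\infty$, and combining the previous estimates.

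The passage to the limit $k\to\infty$ combines the Banach-Alaoglu theorem with the Lions-Aubin lemma to extract a subsequence along which $c_r^k\to c_r$ and $v_r^k\to v_r$ strongly in $L^2(0,T;L^2(\Omega))$ and a.e., with the corresponding weak$/$weak-$*$ convergences in the spaces dictated by the uniform bounds. The linear equation \cref{weakvr} passes to the limit by continuity; the nonlocal drift term is treated exactly as in the closed-graph argument used in the proof of \cref{thmexr}, since $c_r^k\chi(c_r^k,v_r^k)\in L^\infty$ converges strongly, the input $\nabla g(c_r^k,v_r^k)$ to $\mathcal{R}_r$ converges weakly in $(L^2)^n$, and $\mathcal{R}_r\in L((L^2(\Omega))^n)$.

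The hard part will be the superlinear reaction $f_c^k(c_r^k,v_r^k)$: weak $L^q$-boundedness alone does not identify the limit. The plan is to combine the a.e.\ convergence $c_r^k\to c_r,\, v_r^k\to v_r$ with the uniform $L^{s+1}$-bound on $c_r^k$; since $sq\leq s+1$, the sequence $|f_c^k(c_r^k,v_r^k)|^q$ is uniformly integrable by de la Vall\'ee-Poussin. The continuity of $f_c$ and its uniform approximation by $f_c^k$ on bounded sets, together with Vitali's convergence theorem, then yield $f_c^k(c_r^k,v_r^k)\to f_c(c_r,v_r)$ in $L^1(0,T;L^1(\Omega))$ and weakly in $L^q(0,T;L^q(\Omega))$, which is enough to pass to the limit in \cref{weakcr}. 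Nonnegativity of the limit and the initial condition are preserved by the a.e.\ convergence.
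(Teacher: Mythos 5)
Your overall architecture is the paper's: truncate $f_c$ to a globally Lipschitz reaction so that \cref{thmmainEx} applies, derive $k$-uniform versions of \cref{boundcr}--\cref{bnfintf_v} by testing with $c_r^k$, splitting $\nabla g$ inside $\mathcal{R}_r$ and using \cref{Assump3}{\it(a)} for coercivity, and pass to the limit via Banach--Alaoglu, Lions--Aubin and a Vitali-type argument (the paper uses the multiplicative cutoff $f_{ck}(c,v)=f_c(c,v)\eta_k(c)$ with a smooth bump $\eta_k$ rather than your composition with $T_k$). However, there is a genuine error in your claim that ``the two inequalities in \cref{Assump2}{\it(b)} hold with constants independent of $k$.'' The growth bound survives the truncation, but the dissipativity does not: for $c>k$ your $f_c^k(c,v)=f_c(k,T_k(v))$ is frozen in $c$, so $c\,f_c^k(c,v)$ is only \emph{linear} in $c$, while the right-hand side $\Cr{Dis1}-\Cr{Dis2}c^{s+1}$ is superlinearly negative. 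Concretely, for $f_c(c,v)=-c^s$ (admissible for $s\geq1$) one has $c\,f_c^k(c,v)=-k^s c$ on $\{c>k\}$, and no $k$-independent constants $C_1,C_2>0$ satisfy $-k^s c\leq C_1-C_2c^{s+1}$ for all large $c$. Consequently your differential inequality with the term $C\|c_r^k\|_{L^{s+1}(\Omega)}^{s+1}$ on the left, and with it the asserted uniform bound on $\|c_r^k\|_{L^{s+1}(0,T;L^{s+1}(\Omega))}$, do not follow; this gap propagates into both places where you use that bound, namely the derivation of \cref{boundintc1s} and the de la Vall\'ee-Poussin step for identifying the limit of the reaction term. (Note also that the solutions $c_r^k$ are not a priori bounded by $k$, so you cannot dismiss the region $\{c_r^k>k\}$.)

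The gap is repairable, and the repair is exactly what the paper does: the truncated reaction only yields a $k$-uniform bound on a \emph{truncated} dissipation. Testing with $c_r^k$ gives the full term $-C\int_{\{c_r^k\leq k\}}(c_r^k)^{s+1}$ plus, on $\{c_r^k>k\}$ and for $k$ large, $c\,f_c^k\leq \frac{C_1}{k}c-C_2k^s c$, whose harmless part is absorbed by Gronwall; this controls $\int_0^T\int_\Omega \min(c_r^k,k)^{s+1}\,dx\,dt$, the analogue of the paper's bound \cref{intcrks} on $\iint c_{rk}^{1+s}\eta_k(c_{rk})$. Since $|f_c^k(c,v)|\leq C\bigl(1+\min(c,k)^s\bigr)$ and, by \cref{qq}, $\min(c,k)^{sq}\leq 1+\min(c,k)^{s+1}$ with $\min(c,k)^{s+1}=k^{s+1}\leq k^s c$ on $\{c>k\}$, this truncated bound suffices for \cref{boundintc1s}; Vitali should then be run on the uniform $L^q$-bound ($q>1$) of $f_c^k(c_r^k,v_r^k)$ itself together with its a.e.\ convergence, not on an $L^{s+1}$-bound of $c_r^k$, which is neither available nor claimed in the paper. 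Two smaller points: $T_k$ has a kink, so $f_c^k\notin C^1$ and \cref{thmmainEx} (whose hypotheses include \cref{Assump1}, hence $f_c\in C^1$) does not literally apply --- smooth the truncation, as the paper's $\eta_k$ does automatically; and your appeal to ``standard parabolic regularity'' for the $v$-equation covers only $D_v>0$, whereas the theorem also admits $D_v=0$, in which case \cref{boundnvr} is obtained, as in the paper, by differentiating the ODE \cref{weakvr} and applying Gronwall to $\|\nabla v_r^k\|_{(L^2(\Omega))^n}^2$.
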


\begin{proof}
 For $k \in \N$  set 
 $$f_{ck}(c,v) := f_c(c, v)\eta_k(c),$$ 
 where $\eta_k$ is a cut-off function:
\begin{align}
\eta_k \in C_0^{\infty}(B_k(0)) \quad\text{with}\quad \eta_k \equiv 1 \quad \text{ in } B_{k-1}(0) \quad\text{and}\quad 0 \leq \eta_k \leq 1.                                                                                                                                                                             \end{align}
Since $f_{ck}$ is Lipschitz, \cref{thmmainEx} implies the existence of a solution $(c_{rk},v_{rk})$ in terms of \cref{DefSol} with $\partial_t c_{rk}\in L^2(0,T;(H^1(\Omega))^*)$, which corresponds to $f_c=f_{{ c}k}$. Our next aim is to prove that $(c_{rk},v_{rk})$ satisfies the same bounds as in the statement of the Theorem with some constant $\Cr{Curnew}(T,\|\mathcal{R}_r\|_{L(\Ltwon)})$ which does not depend upon $k$.

\noindent
Set $$\Cl{NR}(\|\mathcal{R}_r\|) := \|\mathcal{R}_r\|_{L((L^2(\Omega))^n)}.$$ Taking   $\varphi:=c_{rk}$ in \cref{weakcr}  written for $c_{rk}$  and using  \cref{Assump1},  {\it\ref{Assump2}{\it\cref{Assump2b}}}, {\it\ref{Assump3}{\it\cref{Assump3a}}} and the H\"older and Young inequalities, we compute 
\begin{align}
&\frac{1}{2}\frac{d}{dt}\|c_{rk}\|_{L^2(\Omega)}^2\nonumber\\
=&\int_{\Omega}\Big(-\left(D_c(c_{rk},v_{rk})\nabla c_{rk
  }- c_{rk}\chi(c_{rk},v_{rk}) {\cal R}_{r}(\nabla g(c_{rk},v_{rk}))\right)\cdot \nabla c_{rk}+ c_{rk}f_{ck}(c_{rk},v_{rk})\Big )\,dx\nonumber\\
\leq&-\Cr{Dcmin}\left\|\nabla c_{rk}\right\|_{(L^2(\Omega))^n}^2+ \Cr{Q1}\left\|\nabla c_{rk}\right\|_{L^2(\Omega)} \left\| {\cal R}_{r}(\nabla g(c_{rk},v_{rk}))\right\|_{(L^2(\Omega))^n}+\int _{\Om }( \Cr{Dis1}-\Cr{Dis2} c_{rk}^{1+s}) \eta_k(c_{rk}) \, dx \nonumber\\
\leq&-\Cr{Dcmin}\left\|\nabla c_{rk}\right\|_{(L^2(\Omega))^n}^2 + \Cr{Q1} \Cr{NR}( \|\mathcal{R}_r\|) \left\|\nabla c_{rk}\right\|_{(L^2(\Omega))^n} \left\|\nabla g(c_{rk},v_{rk})\right\|_{(L^2(\Omega))^n} + \Cl{Dis1mu}-\Cr{Dis2} \int_{\Omega} c_{rk}^{1+s} \eta_k(c_{rk}) \, dx  \nonumber\\
\leq&-\Cr{Dcmin}\left\|\nabla c_{rk}\right\|_{(L^2(\Omega))^n}^2 + \Cr{Q1} \Cr{NR}(\|\mathcal{R}_r\|) \left\|\nabla c_{rk}\right\|_{(L^2(\Omega))^n} \left\|\partial_cg(c_{rk},v_{rk}) \nabla c_{rk} \right\|_{(L^2(\Omega))^n}\nonumber\\ 
&+  \Cr{Q1} \Cr{NR}( \|\mathcal{R}_r\|) \left\|\nabla c_{rk}\right\|_{(L^2(\Omega))^n} \left\|\partial_vg(c_{rk},v_{rk}) \nabla v_{rk} \right\|_{(L^2(\Omega))^n} + \Cr{Dis1mu}-\Cr{Dis2}  \int_{\Omega} c_{rk}^{1+s} \eta_k(c_{rk}) \, dx \nonumber\\
\leq&-\Cr{Dcmin} \Cr{Cgstr}(\|\mathcal{R}_r\|) \left\|\nabla c_{rk}\right\|_{(L^2(\Omega))^n}^2 +  \Cr{Q1} \Cr{NR}( \|\mathcal{R}_r\|) \left\| \partial_vg \right\|_{L^{\infty}(\R_0^+ \times \R_0^+)} \left\|\nabla c_{rk}\right\|_{(L^2(\Omega))^n} \left\| \nabla v_{rk} \right\|_{(L^2(\Omega))^n} \nonumber\\ 
&+ \Cr{Dis1mu}-\Cr{Dis2} \int_{\Omega} c_{rk}^{1+s} \eta_k(c_{rk}) \, dx \nonumber\\
\leq& -2\Cl{Z2}(\|\mathcal{R}_r\|) \left\|\nabla c_{rk}\right\|_{(L^2(\Omega))^n}^2 +\Cl{C15}(\|\mathcal{R}_r\|)\left\|\nabla v_{rk}\right\|_{(L^2(\Omega))^n}^2 + \Cr{Dis1mu}-\Cr{Dis2} \int_{\Omega} c_{rk}^{1+s} \eta_k(c_{rk})\,dx \label{estc1}.
\end{align}
Next, we estimate $v_{rk}$. If $D_v>0$, then standard theory \cite{LSU} yields that for all $0<t\leq T$

\begin{align}
   \|v_{rk}\|_{L^{\infty}(0,{ t};{ H^1}(\Omega))}^2+\|v_{rk}\|_{L^2(0,{ t};H^2(\Omega))}^2+\|\partial_t v_{rk}\|_{L^2(0,{ t};L^2(\Omega))}^2
   \leq &{\Cl{C13_}(T)}\left(\|v_0\|_{{ H^1}(\Omega)}^2+\|c_{rk}\|_{L^2(0,{ t};{ L^2}(\Omega))}^2\right).\label{apriorivrk1}
  \end{align}
Here and further in the proof we omit the dependence of constants upon $D_v$. 
If $D_v=0$, then we get the ODE
\begin{align}
\partial_t v_{rk}=&f_v(c_{rk},v_{rk}). \label{vrkODE}
\end{align}
Hence, the assumptions on $f_v$ and the solution components together with the chain rule imply that 
\begin{align}
\partial_t v_{rk}\in& L^2(0,T;H^1(\Omega)).\nonumber
\end{align}
 Computing the gradient on both sides of \cref{vrkODE}, multiplying by $\nabla v_{rk}$ throughout, integrating over $\Omega$, and using {\it  \cref{Assump1}} and the Young inequality, we obtain that
\begin{align}
 \frac{1}{2}\frac{d}{dt} \|\nabla v_{rk}\|_{(L^2(\Omega))^n}^2 
 =&\int_{\Omega}\left(\partial_vf_v(c_{rk},v_{rk})|\nabla v_{rk}|^2+\partial_cf_v(c_{rk},v_{rk})\nabla c_{rk}\cdot\nabla v_{rk}\right)\,dx\nonumber\\
\leq& \left\| \partial_v f_v\right\|_{L^{\infty}(\R_0^+ \times \R_0^+)} \|\nabla v_{rk}\|_{(L^2(\Omega))^n}^2 + \left\| \partial_c f_v\right\|_{L^{\infty}(\R_0^+ \times \R_0^+)} \|\nabla c_{rk}\|_{(L^2(\Omega))^n} \|\nabla v_{rk}\|_{(L^2(\Omega))^n} \nonumber\\
\leq& \Cl{dvfvy}\|\nabla v_{rk}\|_{(L^2(\Omega))^n}^2+\Cl{C17}\left\|\nabla c_{rk}\right\|_{(L^2(\Omega))^n}^2 .\label{estv1}
\end{align}
Applying the Gronwall inequality to \cref{estv1} yields
\begin{align}
   \|\nabla v_{rk}\|_{L^{\infty}(0,{ t};{ L^2}(\Omega))}^2
   \leq &{\Cl{C13_1}(T)}\left(\|\nabla v_0\|_{L^2(\Omega)}^2+\|\nabla c_{rk}\|_{L^2(0,{ t};{ L^2}(\Omega))}^2\right).\label{apriorinvrk}
  \end{align}
Multiplying \cref{vrkODE} by $v_{rk}$ we obtain in a similar fashion that
\begin{align}
   \|v_{rk}\|_{L^{\infty}(0,{ t};{ L^2}(\Omega))}^2
   \leq &{\Cr{C13_1}(T)}\left(\|v_0\|_{L^2(\Omega)}^2+\|c_{rk}\|_{L^2(0,{ t};{ L^2}(\Omega))}^2\right).\label{apriorivrk}
  \end{align}
  Adding \cref{apriorinvrk,apriorivrk} together yields
\begin{align}
   \|v_{rk}\|_{L^{\infty}(0,{ t};{ H^1}(\Omega))}^2
   \leq &{\Cr{C13_1}(T)}\left(\|v_0\|_{H^1(\Omega)}^2+\|c_{rk}\|_{L^2(0,{ t};{ H^1}(\Omega))}^2\right).\label{aprioriH1vrk}
  \end{align} 
  Estimating the right-hand side of \cref{vrkODE}  by using \cref{apriorivrk} implies 
  \begin{align}
   \|\partial_t v_{rk}\|_{L^2(0,T;L^2(\Omega))}^2\leq &{\Cr{C13_1}(T)}\left(\|v_0\|_{L^2(\Omega)}^2+\|c_{rk}\|_{L^2(0,T;{ L^2}(\Omega))}^2\right).\label{aprioriptvrk}
  \end{align} 
 Further, combining \cref{estc1} with \cref{apriorivrk1} if $D_v>0$ and with \cref{aprioriH1vrk} if $D_v=0$ and using the Gronwall inequality yields for $c_{rk}$ the same estimates as \cref{boundcr,boundncr}, and the estimate
 \begin{align} 
\int_0^T \int_{\Omega} c_{rk}^{1+s} \eta_k(c_{rk}) \, dx dt\leq \Cl{C18}(T,\|\mathcal{R}_r\|).\label{intcrks}
\end{align}
 From \cref{ineqNem,intcrks}, the embedding of Lebesgue spaces, and $\eta_k \in [0,1]$ we conclude that
\begin{align*}
\|f_{ck}(c_{rk}, v_{rk})\|_{L^q(0,T;L^q(\Omega))} \leq& \Cl{H1}(T) + \Cl{H2}||c_{rk}^s\eta_k(c_{rk})||_{L^{\frac{s+1}{s}}(0,T;L^{\frac{s+1}{s}}(\Omega))}\\
 \leq& \Cr{H1}(T) + \Cr{H2} \left( \int_0^T \int_{\Omega} c_{rk}^{1+s} \eta_k(c_{rk}) \, dx \, dt \right)^{\frac{s}{s+1}}\nonumber\\
 \leq &\C(\|\mathcal{R}_r\|,T).
\end{align*}
so that \cref{boundintc1s} holds for $f_{ck}(c_{rk},v_{rk})$. Combining \cref{boundcr,boundncr} for $c_{rk}$ with \cref{apriorivrk1} or \cref{aprioriH1vrk,aprioriptvrk} (depending on the sign of $D_v$) and using the equation for $v_{rk}$ yields  such bounds as \cref{estvr}-\cref{bounddtvr} and \cref{bnfintf_v} for $c_{rk}$ and $v_{rk}$. Finally, combining \cref{Assump1} with bounds on $\nabla c_{rk},\nabla v_{rk}$, and $f_{ck}(c_{rk},v_{rk})$, the weak formulation \cref{weakcr}, and estimating in a standard way yields \cref{bndptcr} for $\partial_t c_{rk}$.

\noindent
Since $(c_{rk},v_{rk})$ satisfy \cref{boundcr}-\cref{bnfintf_v} uniformly in  $k$, a standard limit procedure based on the Banach-Alaoglu theorem, the dominated convergence theorem, the Lions lemma, and the compensated compactness yields the existence of a weak-strong solution $(c_{r},v_{r})$ to \cref{weakcvr} which satisfies \cref{boundcr}-\cref{bnfintf_v}.

\end{proof}

\subsection{Limiting behaviour of the nonlocal model \texorpdfstring{\cref{nlProto}}{} as \texorpdfstring{$r\rightarrow0$}{}}\label{SecConv}
In this Subsection we finally prove our main result concerning convergence for $r\to 0$.
\begin{proof} (of \cref{limitr})
 Due to \cref{condC*} and  \cref{TWellDef} \eqref{PropTiv} or {\it\ref{SWellDef}} \eqref{PropTiv}, respectively, there exists a sequence $r_m\rightarrow0$ as $m\rightarrow\infty$ such that 
\begin{align*}
 \underset{m\in\N}{\sup}\left\|{\cal R}_{r_m}\right\|_{L((L^2(\Omega))^n)} < \frac{1}{\Cr{Cgst}}.
\end{align*}
Since  for each such $r_m$  the \cref{Assump3}{\it\cref{Assump3a}}  are satisfied, \cref{DthmmainEx} is applicable and yields the existence of solutions  $(c_{r_m},v_{r_m})$ which satisfy \cref{boundcr}-\cref{bnfintf_v}.   
 Replacing $\|\mathcal{R}_r \|$ by $\Cr{Cgst}$ in $\Cr{Curnew}(T,\|\mathcal{R}_r\|_{L(\Ltwon)})$ makes the constant in \cref{boundcr}-\cref{bnfintf_v} independent of $m$.
 Using the Lions-Aubin lemma and the Banach-Alaoglu theorem we conclude (by possibly switching to a subsequence) that
  \begin{alignat}{3}
 &c_{r_m} \underset{m \rightarrow \infty}{\rightarrow} c,\ \ v_{r_m} \underset{m \rightarrow \infty}{\rightarrow} v  \qquad &&\text{in } L^2(0,T;L^2(\Om)), \quad \text{a.e. in } (0,T)\times \Omega\label{convcmae} \\
&c_{r_m} \underset{m \rightarrow \infty}{\rightharpoonup} c,\ \  v_{r_m} \underset{m \rightarrow \infty}{\rightharpoonup} v \qquad &&\text{in } L^2(0,T;H^1(\Om)).\label{convcmh1}
\end{alignat}
Using standard arguments based on the Banach-Alaoglu theorem, the dominated convergence theorem, the Lions lemma, and assumptions on $\chi$ and $g$ we conclude from \cref{convcmae,convcmh1} that
\begin{alignat}{3}
 &c_{r_m}\chi (c_{r_m}, v_{r_m})\underset{m \rightarrow \infty}{\rightarrow} c\chi (c,v)  \qquad &&\text{in } L^2(0,T;L^2(\Om)),\label{convchi}\\
 &g(c_{r_m},v_{r_m}) \underset{m \rightarrow \infty}{\rightharpoonup} g(c,v) \qquad &&\text{in } L^2(0,T;H^1(\Om)).\label{convg}
\end{alignat}
Observe that for any $\psi \in L^{\infty}(0,T; W^{1, \infty}(\Omega))$ the following estimate holds:
\begin{align}
&\int_0^T \int_{\Omega} \left| \mathcal{R}_{r_m} (c_{r_m}\chi (c_{r_m}, v_{r_m}) \nabla  \psi )-c \chi(c,v) \nabla \psi\right|^2 \, dx \, dt\\
\leq& 2\left( \int_0^T \int_{\Omega} \left| \mathcal{R}_{r_m} (c_{r_m}\chi (c_{r_m}, v_{r_m}) \nabla \psi )-\mathcal{R}_{r_m} (c\chi (c, v) \nabla\psi ) \right|^2 \, dx \, dt\right.\nonumber\\
 &\left.+ \int_0^T \int_{\Omega} \left|\mathcal{R}_{r_m} (c\chi (c, v) \nabla \psi )-c \chi(c,v) \nabla \psi\right|^2 \, dx \, dt \right).\label{estexr}
\end{align}
Now, using \eqref{convchi}  together with \cref{TWellDef}{\it \cref{PropTi}} and {\it \cref{PropTiii}} { and \cref{A0T0}} or \cref{SWellDef}{\it \cref{PropSi}} and {\it \cref{PropSiii}} { and \cref{D0S0}}, respectively, we conclude that the right hand side of \cref{estexr} tends to zero, hence
\begin{align} 
{\cal R}_{r_m} (c_{r_m}\chi(c_{r_m},v_{r_m})\nabla \psi)  \underset{m \rightarrow \infty}{\rightarrow } c\chi(c,v)\nabla \psi \qquad \text{in } L^2(0,T;(L^2(\Omega))^n).\label{convrm}
\end{align}
Thus, using \cref{TWellDef}{\it \cref{PropTii}} or \cref{SWellDef}{\it \cref{PropSii}}, respectively,  and compensated compactness, we obtain from \cref{convg,convrm} that
 \begin{align}
  \int_0^T\int_{\Omega}c_{r_m}\chi(c_{r_m},v_{r_m}) {\cal R}_{r_m}(\nabla g(c_{r_m},v_{r_m}))\cdot \nabla\psi\,dx\, dt=&\int_0^T\int_{\Omega} \nabla g(c_{r_m},v_{r_m})\cdot {\cal R}_{r_m}(c_{r_m}\chi(c_{r_m},v_{r_m})\nabla\psi)\,dxdt \nonumber  \\
  \underset{m \rightarrow \infty}{\rightarrow }& \int_0^T\int_{\Omega} \nabla g(c,v)\cdot c\chi(c,v)\nabla\psi\,dxdt.\nonumber
 \end{align}
\noindent 
The convergence in the remaining terms, equations, and conditions follows  by means of a standard limit procedure based on the Banach-Alaoglu theorem, the dominated convergence theorem, the Lions lemma, and the compensated compactness. We omit these details.

\end{proof} 


\section{Numerical simulations in 1D}\label{sec:numerics}

We perform numerical simulations to investigate on the one hand the effect of differences between hitherto choices of nonlocal operators and our novel ones proposed in \cref{AverOper}, and on the other hand convergence between nonlocal and local formulations. 
For compactness, our current study restricts to the prototypical nonlocal model for cellular adhesion \cref{nlHapto}, its reformulation as \cref{nlProto}, and the corresponding local model \cref{lProto}. Thus, for \cref{nlProto} we take the operator form $\mathcal R_r=\mathcal T_r$, with $\mathcal T_r$ as in \cref{IrTr}. These models can be interpreted in the context of a population of cells
invading an adhesion-laden ECM/tissue environment and, with this in mind, we initially concentrate 
cells at the centre of a one-dimensional domain $\Omega = [0,L]$ and impose an initially 
homogeneous ECM. Specifically, we set for the ECM
\begin{equation} \label{iniv0}
v_0(x) = 1,\quad x \in \Om
\end{equation}
and consider for the cell population a Gaussian-shaped aggregate
\begin{equation} \label{c0}
c_0(x)=\exp \left(-\alpha (x-x_c)^2 \right),\quad  x \in \Om,
\end{equation}
where we set $x_c = L/2$ or $x_c = 0$.

\noindent
The numerical scheme follows that described in  \cite{Alf-num09}, which we refer to for 
details. Briefly, a Method of Lines approach is invoked whereby equations are first discretised 
in space (in conservative form, via a finite volume method) to yield a high-dimensional system of 
ODEs, which are subsequently integrated in time. Discretisation of advective terms follows a third 
order upwinding scheme, augmented by flux limiting to preserve positivity of solutions and the 
resulting scheme is (approximately) 
second-order accurate in space. Time integration has been performed with standard Matlab ODE solvers:
our default is ``ode45'' with absolute and relative error tolerances set at $10^{-6}$, but simulations have been compared for varying space discretisation step, ODE solver, and error tolerances. 
To measure the difference between two distinct solutions over time we define a 
distance function as follows:
\[
d(u_1(x,t),u_2(x,t)) (t)= \avint_{\Om} \left| u_1(x,t) - u_2(x,t)\right| dx\,,
\]
where $u_1$ and $u_2$ denote the two solutions that are being compared.

\subsection{Comparison of nonlocal operator representations}\label{SecNum1}

We first explore the correspondence between forms of nonlocal operator representation:
we choose the prototypical nonlocal model for cell/matrix adhesion \cref{nlHapto} and its
reformulation \cref{nlProto}, therefore taking for the latter the operator 
form $\mathcal R_r=\mathcal T_r$ with $\mathcal T_r$ as in \cref{IrTr}. In what follows, 
solutions to \cref{nlHapto} are denoted $c_{A}$ and $v_{A}$ and those for
\cref{nlProto} denoted $c_{T}$ and $v_{T}$. For simplicity we restrict in this section to 
a minimalist formulation in which $D_c =$ constant, $\chi = 1$, $f_c = 0$. Cell-matrix interactions 
are defined by $g(c,v) = S_{cc} c + S_{cv} v$ and $f_v(c,v) = -\mu c v$, where $S_{cc}$ and $S_{cv}$ 
respectively represent cell-to-cell and cell-to-matrix adhesion strengths and $f_v$ 
simplistically describes (direct) proteolytic degradation of matrix by cells 
parametrised by degradation rate $\mu$. 
\begin{figure}
\begin{center}
\includegraphics[width=\textwidth]{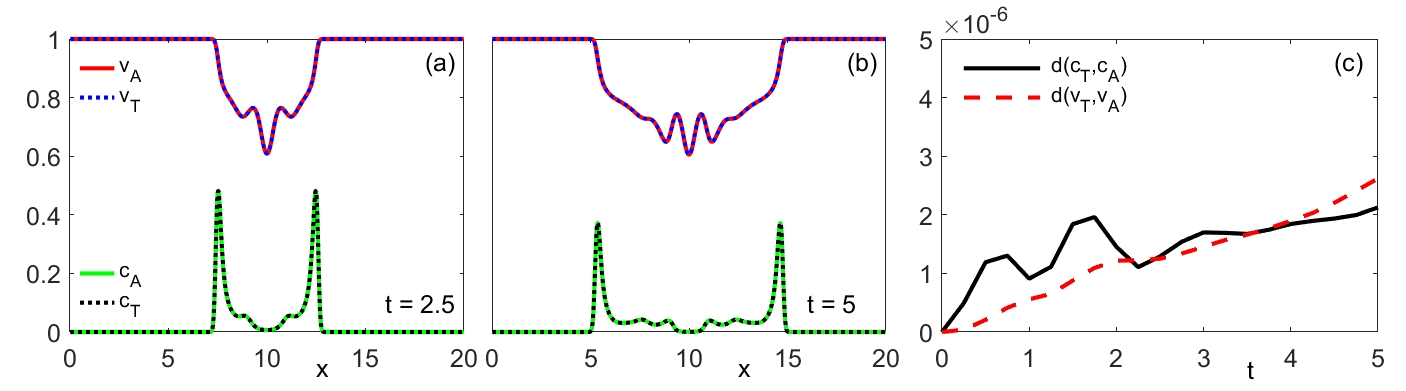}
\includegraphics[width=\textwidth]{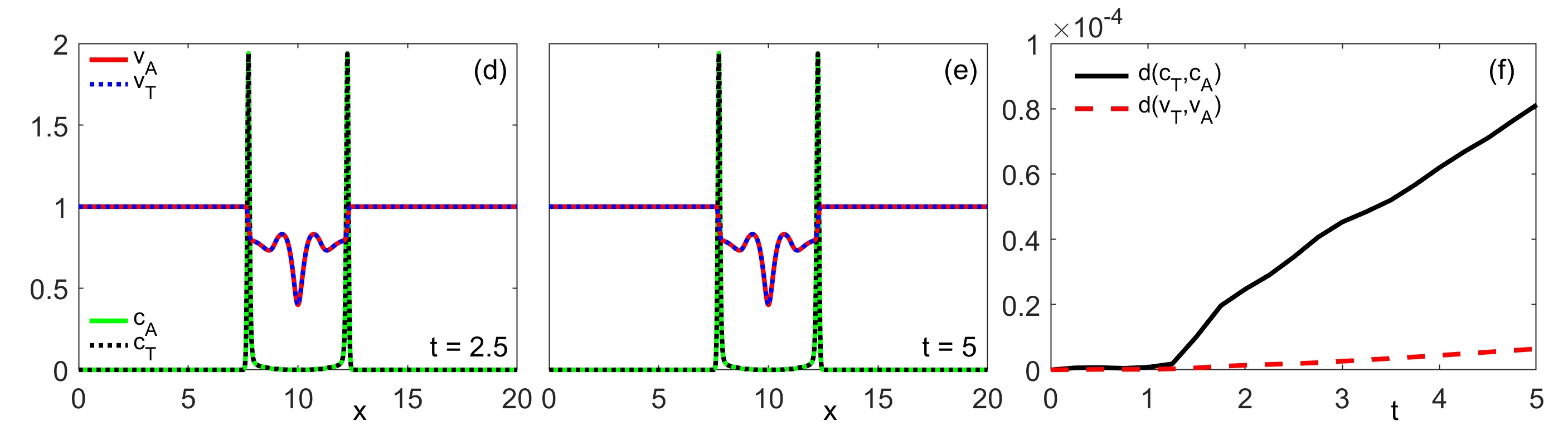}
\end{center}
\caption{Comparison between nonlocal formulations \cref{nlHapto} and \cref{nlProto}. (a-b) Cell and matrix densities for the models \cref{nlHapto} and \cref{nlProto} at $t=2.5$ and $t = 5$. (c) Difference between the solutions. For these simulations we take $\alpha = 10$, $r = 1$, $D_c = 0.01$, $\chi = 1$, $F_r = 2$, $f_c = 0$ and $f_v(c,v) = - c v$, along with (a-c) $g(c,v) = 10 v$, (d-f) $g(c,v) = 2.5c + 10 v$.} \label{figure1}
\end{figure}

\noindent
\cref{figure1} shows the computed solutions under (a-c) negligible cell-cell adhesion 
($S_{cc} = 0$) and (d-f) moderate cell-cell adhesion ($S_{cc} = S_{cv}/4$). The equivalence of the
two formulations is revealed through the negligible difference between solutions, with the
distance magnitude attributable to the subtly distinct numerical implementation. Both simulations 
describe an invasion/infiltration process, in which matrix degradation by the cells generates an 
adhesive gradient that pulls cells into the acellular surroundings. The impact of cell-cell adhesion is manifested in the compaction of cells at the leading edge into a tight aggregate.

\noindent
However,  as pointed out in \cref{AverOper}, differences in the nonlocal formulations can emerge in the 
vicinity of boundaries. To highlight this we consider an equivalent formulation to \cref{figure1} 
(a-c), but with the cells initially placed at the left boundary ($x_{c} = 0$ in \eqref{c0}), e.g. suggesting a tumor mass which is concentrated there and whose cells are expected to detach and migrate into the considered 1D domain, travelling from left to right. As 
stated earlier we impose zero-flux boundary conditions at $x=0$ (and $x=L$), and further suppose 
$c = v = 0$ and $\nabla c = \nabla v$ in the extradomain region ($\R \backslash \Om$). Representative simulations are shown in  \cref{figure2}. They are in agreement with our observation in \cref{BndL}. Indeed, for
this scenario, in the prototypical nonlocal model \cref{nlHapto}-\cref{DefAr}
there is a very large adhesion velocity modulus at $x=0$; the cells are crowded within the tumor mass and their mutual interactions are maintained during the invasion process in a sufficiently strong manner to ensure a collective shift of the still concentrated cell aggregate, with a correspondingly strong tissue degradation in its wake. 
In the reformulation \cref{nlProto}-\cref{IrTr}, rather,
the adhesion magnitude at $x=0$ is for the same initial condition much lower - suggesting a tumor whose cells are readier to detach and migrate individually. This results in a more diffusive spread, with accordingly less degradation of tissue, and with cell mass remaining available at the original site over a larger time span. The latter scenario is  different from the former one, but it seems nevertheless reasonable, as a tumor mass would  very often not move as a whole from its original location to another in a relatively short time; moreover, the active cells in a sufficiently large tumor (releasing substantial amounts of acidity) are known to preferentially adopt a migratory phenotype and perform EMT (epithelial-mesenchymal transition), see e.g., \cite{Gupta168,Peppicelli2014,PrietoGarca2017}, which supports the idea of cells moving in a loose way rather than in compact, highly aggregated assemblies \footnote{unless environmental influences dictate conversion to a collective type of motion}. As such, our simulations suggest that,  within this particular function- and parameter setting, choosing the adhesion operator in the form \cref{DefAr} instead of \cref{IrTr} might  possibly overestimate the tumor invasion speed and associated healthy tissue degradation, thereby predicting a spatially concentrated tumor and neglecting regions with lower cell densities which can nevertheless trigger tumor recurrence if untreated.


\begin{figure}
\begin{center}
\includegraphics[width=\textwidth]{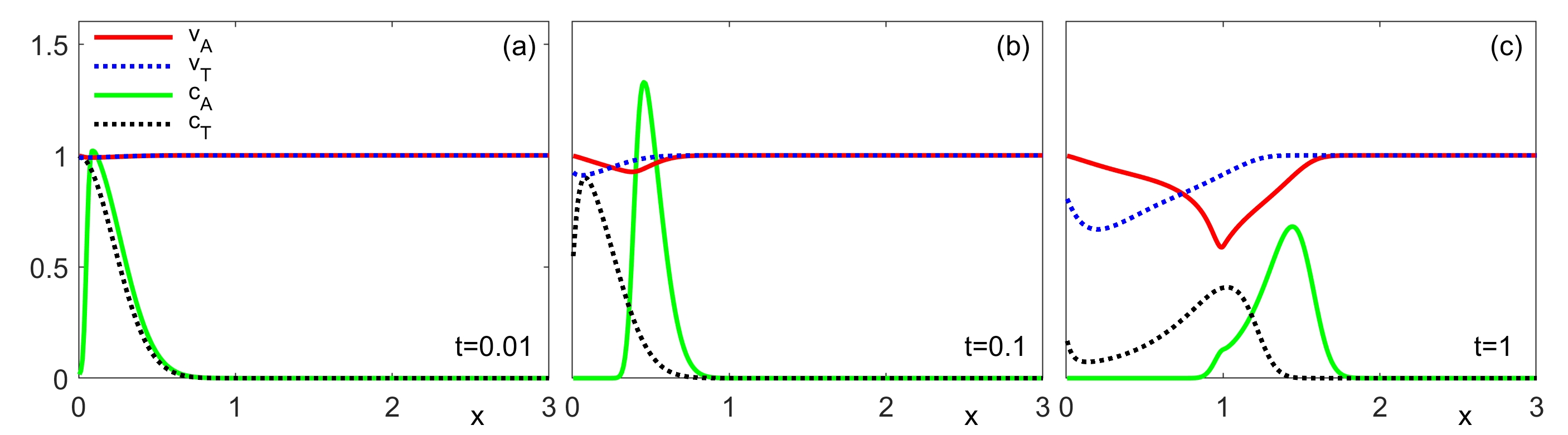}
\includegraphics[width=\textwidth]{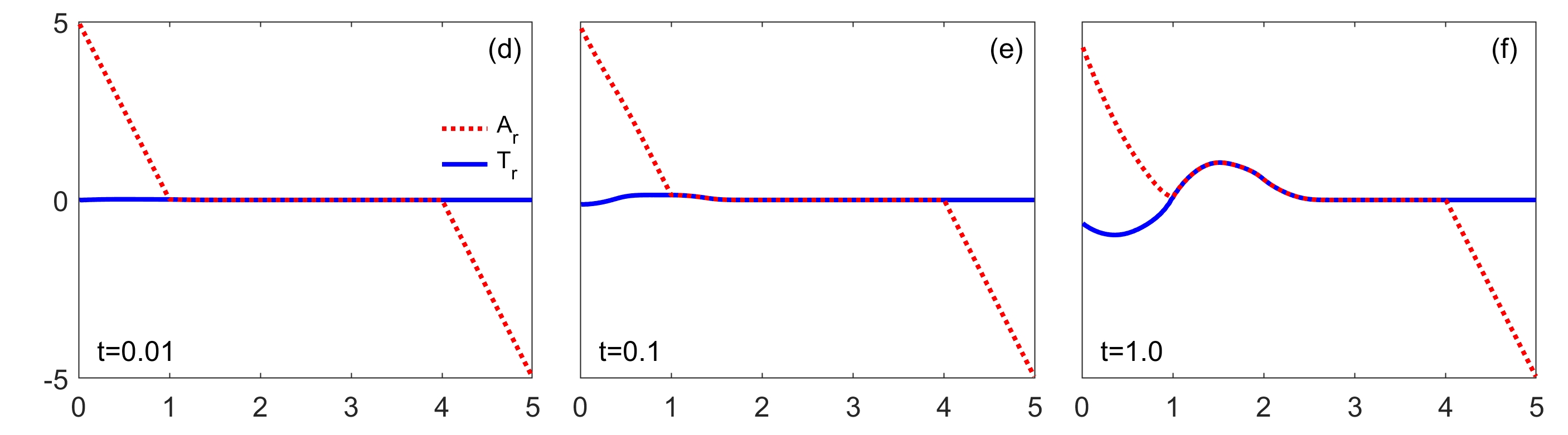}
\end{center}
\caption{(a-c) Comparison between nonlocal formulations \cref{nlHapto} and \cref{nlProto} near boundaries. Model as in \cref{figure1} (a-c), but with the cells initially concentrated at the boundary.
(d-f) Comparison of the two forms of nonlocal operator corresponding to the simulations represented in (a-c). The operators are practically identical sufficiently far from the boundary, but can diverge significantly for distances $<r$ from the boundaries.} \label{figure2}
\end{figure}

\subsection{Comparison between nonlocal and local formulation}\label{SecNum2}

Having 
compared together the original, \cref{nlHapto}, and the new, \cref{nlProto},  nonlocal formulations, we next 
consider the extent to which their dynamics can be captured by the classical local 
formulation \cref{lProto}. Note that for nonlocal model simulations we will 
restrict to the original formulation \cref{nlHapto}, 
so that we can avail ourselves of an already well-established efficient (in terms of computational time) numerical scheme \cite{Alf-num09}.
Here we use $c_{L}$ and $v_{L}$ to denote
solutions to the local formulation and  $c_{Ar}$ and $v_{Ar}$ to denote
solutions to the nonlocal model with sensing radius $r$. We remark that a large 
number of related local and nonlocal models have been numerically studied 
to describe the 
invasion-type process considered here (e.g. 
\cite{perumpanani1996,anderson2000,gerisch2008,PAS10}): here the specific focus is to explore the convergence of nonlocal to local form as $r\rightarrow 0$, which, as far as we are aware, has not been systematically investigated.

\noindent
As in the first test we use the initial values \cref{iniv0} and \cref{c0}, choosing $x_c=L/2$, $\alpha=10$ in the latter, and consider the coefficients and functions as proposed in \cref{Bsp}. 
Under these choices the resultant nonlinear diffusion coefficient for the $c$-equation 
in the classical local formulation (compare \cref{lProtoc}) becomes
\begin{equation} \label{dtilde}
\tilde D_c(c,v)=
\frac{a^2(1+c)^2(1+c+v)^2-bc(1+cv)(S_{cc}+(S_{cc}-S_{cv})v)}{(1+cv)^2(1+c+v)^2}.
\end{equation}
Notably, this potentially becomes negative under an injudicious combination of adhesive 
strengths $S_{cc}$, $S_{cv}$, and of $a,b$. Likewise, the 
actual haptotaxis sensitivity function takes the form
\begin{equation}
\tilde \chi(c,v)=
b\frac{S_{cv}+(S_{cv}-S_{cc})c}{(1+cv)(1+c+v)^2}.
\end{equation}
Again, depending on the relationship between $S_{cc}$ and $S_{cv}$, this can become negative, 
which would lead to repellent haptotaxis: cells effectively moving away from regions with large ECM gradients, a rather unexpected behaviour. This suggests that cell-tissue adhesions should dominate over cell-cell adhesions,\footnote{An analogous behaviour was suggested by the two-scale structured population model with adhesion introduced in \cite{ESuSt2016}.} as 'usual' haptotaxis, i.e. towards the increasing tissue gradient, is known to be an essential component of cell migration, this applying to several types of cells moving through the ECM (tumor cells, mesenchymal stem cells, fibroblasts, endothelial cells, etc.) see e.g. \cite{lamal,pickup,wen} and references therein.\\

\begin{figure}
\begin{center}
\includegraphics[width=\textwidth]{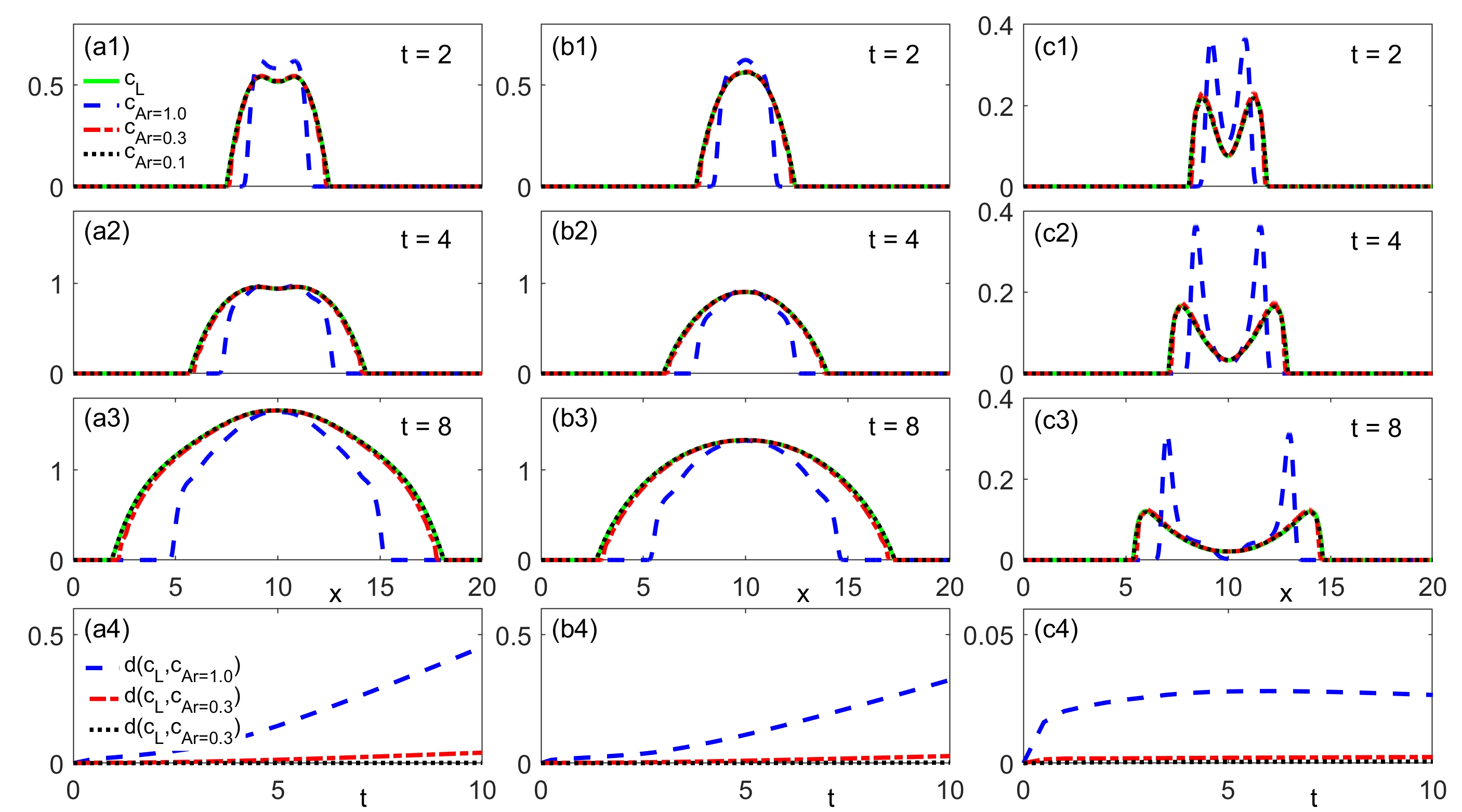}
\end{center}
\caption{Convergence between nonlocal and local/classical formulations under negligible cell-cell adhesion, $S_{cc} = 0$, $S_{cv} = 10$. Functional forms as proposed in \cref{Bsp}, with modifications specified in the subfigures. (a) Solutions for $r = 0.1, 0.3, 1.0$ at (a1) $t=2$, (a2) $t=4$ and (a3) $t=8$; (a4) Distance 
between local/nonlocal solutions as a function of time. For these simulations, we take $a = 0.01$, $b = 1$, $\mu_c = 0.01$, $K_c = 2$, $\eta_c = 1$, $\mu_v = 0$, $\lambda_v = 1$. (b) Solutions for $r = 0.1, 0.3, 1.0$ at (b1) $t=2$, (b2) $t=4$ and (b3) $t=8$; (b4) Distance 
between local/nonlocal solutions as a function of time. Parameters as in (a) except $\mu_v = 1$, $K_v = 1$. (c) Solutions for $f_c = 0$ and $f_v(c,v)= - c v$, with the other parameters as in (a).} \label{figure3}
\end{figure}

\begin{figure}
	\begin{center}
		\includegraphics[width=\textwidth]{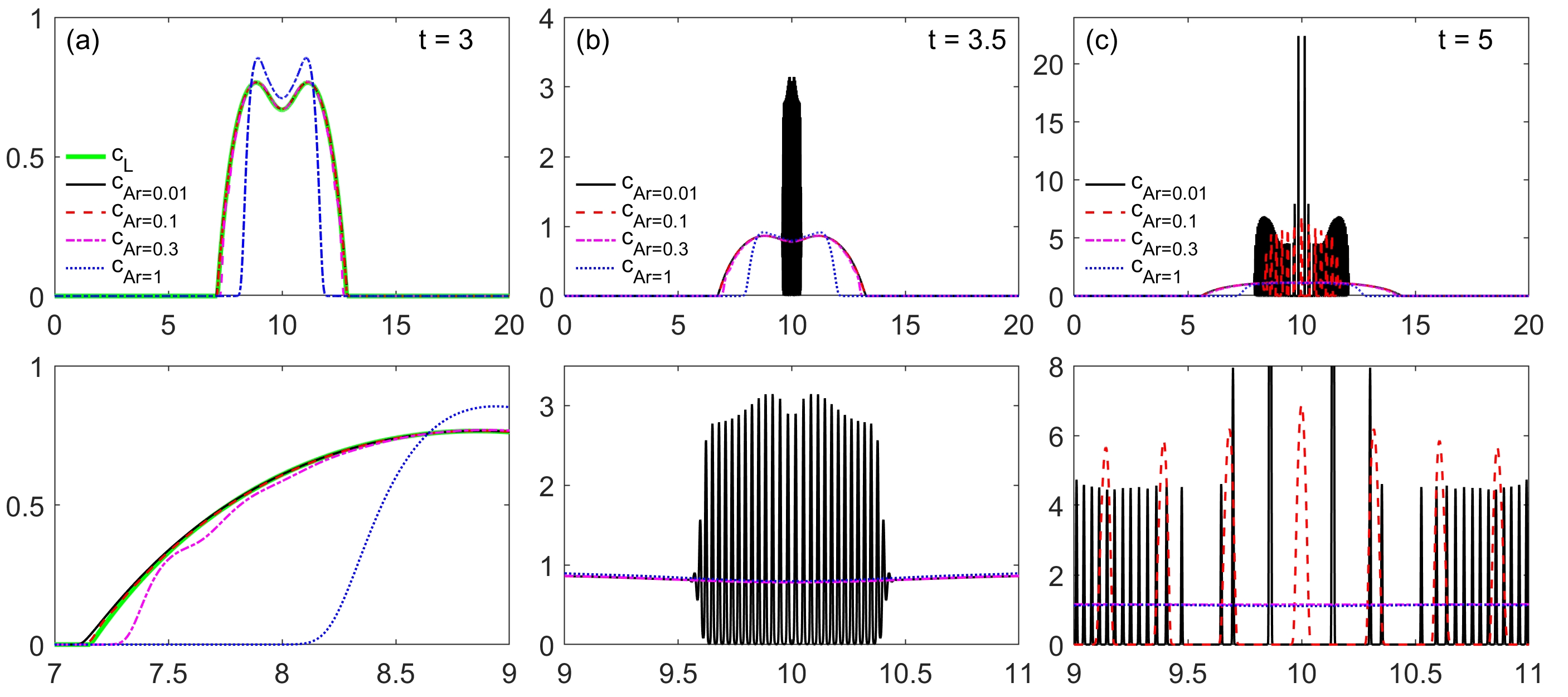}
	\end{center}
	\caption{Time restricted convergence under moderate cell-cell adhesion, $S_{cc} = 2.5$, $S_{cv} = 10$. Top row shows solutions across the full spatial region ($[0,20]$), the bottom row magnifies a relevant portion for clarity. Solutions to local and nonlocal models under the functional forms proposed in \cref{Bsp} for $r = 0.01, 0.1, 0.3, 1.0$ at (a) $t=3$, (b) $t=3.5$ and (c) $t=5$. In (a) solutions to the local model continue to exist and we observe convergence between local and nonlocal formulations. In (b-c) the solutions to the local model are noncomputable. Nonlocal models, however, can destabilise into a pattern of aggregates. 
	Parameters: $a = 0.01$, $b = 1$, $\mu_c = 0.01$, $K_c = 2$, $\eta_c = 1$, $\mu_v = 0$, $\lambda_v = 1$ and adhesion parameters as above.} \label{figure4}
\end{figure}

\noindent
Simulations are plotted in \cref{figure3} where we show cell densities 
for the local model ($c_L$) and nonlocal model under three 
sensing radii ($c_{Ar=0.1}, c_{Ar=0.3}, c_{Ar=1.0}$). In this first set of simulations 
we assume negligible cell-cell adhesion ($S_{cc} = 0$), which automatically 
ensures positivity for the diffusion coefficient of the equivalent local model,
$\tilde D_c(c,v)$. We note that matrix renewal is absent ($\mu_{v} = 0$)
in the left-hand column and present ($\mu_{v} > 0$) in the central column. In the right-hand
column we show the greater generality of the results under vastly simplified kinetics,
specifically setting $f_c(c,v) = 0$ and $f_v(c,v) = - c v$ (with the other functional forms as
in \cref{Bsp}). Simulations highlight the convergence between local and nonlocal models
as $r\rightarrow 0$: for $r = 0.1$, the solution differences become negligible. However,
distinctions emerge for large $r$, where we can expect significant discrepancy 
between the solutions. This suggests that the local model fails to accurately
predict the behaviour in cases where cells sample over relatively large regions of their
local environment.

\noindent
Next, we extend to include a degree of cell-cell adhesion, setting functions
and parameters as in \cref{figure3}, except now $S_{cc} > 0$. Notably this raises
the possibility of a negative diffusion coefficient in the classical formulation and 
subsequent illposedness. Solutions under a representative set of parameters are shown in Figure 
\ref{figure4}. For $t$ below some critical time we observe convergence as before, with the
nonlocal formulation converging to solutions of the local model as $r\rightarrow 0$. 
However, continued matrix degradation further depletes $v$, with the result that 
\eqref{dtilde} can become negative. At this point (in this case $t\approx 3.2\ldots$)
the local model becomes illposed and its solutions become incomputable (implying nonexistence of 
solutions). However, the nonlocal formulation appears to preserve wellposedness, consistent with 
previous theoretical studies where extending to a nonlocal formulation regularises a
singular local model (e.g. \cite{HilPSchm}). Solutions to the nonlocal model instead destabilise
into a quasi-periodic pattern of cell aggregations, maintained through the cell-cell adhesion, and
with a wavelength shrinking as $r \rightarrow 0$. 


\noindent
Finally, we remark that convergence of solutions extends beyond the specific functional forms and, 
as a representative example, we consider a minimalist setting based on linear/constant forms. 
Specifically, we set $D_c = a$ (constant), $\chi = 1$, $f_c = 0$, $g(c,v) = S_{cc} c + S_{cv} v$ 
and $f_v(c,v) = -\mu c v$. In this scenario, the diffusion and haptotaxis coefficients for the classical 
local formulation \cref{lProto} reduce to 
\begin{equation} 
\tilde D_c(c,v)=a - S_{cc} c \quad \mbox{and} \quad \tilde \chi(c,v)= S_{cv}.
\end{equation}
Positivity is only guaranteed under appropriate parameter selection. Such a case is
illustrated in \cref{figure5} (a) where we assume negligible cell-cell adhesion ($S_{cc} = 0$). Clearly, we observe convergence between the nonlocal and local formulations
as $r\rightarrow 0$. Inappropriate parameter selection, however, generates backward diffusion 
in the local model and solutions are consequently incomputable. Under such scenarios, 
however, solutions to the nonlocal model appear to exist: \cref{figure5} (b) plots
the behaviour under shrinking $r$. 
In all cases considered in this test the cells do not reach the boundary region where the difference between the nonlocal formulations \cref{nlHapto,nlProto} can play a role. Thus, we expect the same solution if reformulation \cref{nlProto} is applied instead.
\begin{figure}
\begin{center}
\includegraphics[width=\textwidth]{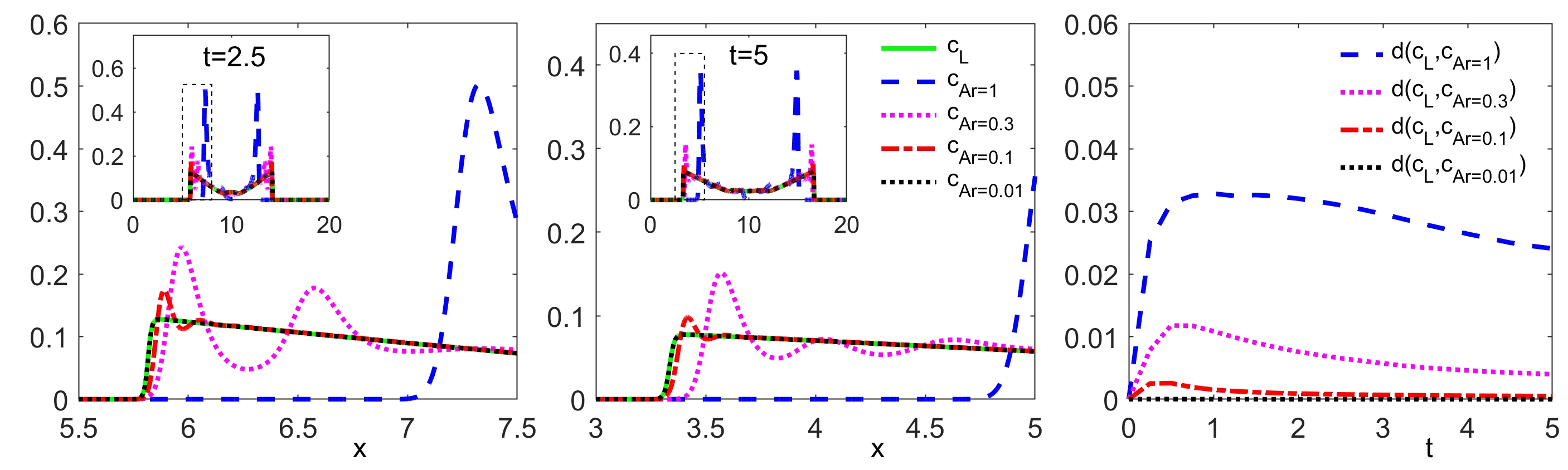}
\end{center}
\caption{Convergence between nonlocal and local/classical formulations under a set of minimalistic 
linear functional forms ($D_c = 0.01, \chi = 1, f_c = 0, g(c,v) = S_{cc} c + S_{cv} v, f_v(c,v) = -\mu c v$).
(a) Negligible cell-cell adhesion, $S_{cc}=0, S_{cv}=10$: solutions shown at (left) $t=2.5$ and (middle) $t=5$, with the distance between solutions to the nonlocal and local model shown in the right panel.}\label{figure5}
\end{figure}

\section{Discussion}\label{sec:discussions}

In this work we provide a rigorous limit procedure which links  nonlocal models involving adhesion or a nonlocal form of chemotaxis gradient to their local counterparts featuring haptotaxis, respectively chemotaxis in the usual sense. As such, our paper closes a gap in the existing literature. Moreover, it offers a unified treatment of the two types of models and extends the previous mathematical framework to settings allowing for more general, solution dependent,  coefficient functions (diffusion, tactic sensitivity, adhesion velocity, nonlocal taxis  gradient, etc.). Finally, we provide simulations  illustrating some of our theoretical findings in 1D.

\noindent
Our  reformulations in terms of  $\mathcal{T}_r$ and $\mathcal{S}_r$ reveal  the tight relationship between the nonlocal operators $\mathcal A_r$ and $\mathring{\nabla}_{r}$  and the (local) gradient. This suggests that both nonlocal descriptions (adhesion, chemotaxis) actually encompass the dependence on the signal gradients rather than on the signal concentration/density itself, which is in line with the biological phenomenon. Indeed, through their transmembrane elements (e.g. receptors, ion channels etc.) the cells are mainly able to perceive and respond to differences in the signal at various locations or within more or less confined areas rather than measure effective signal concentrations. Along with the mentioned solution dependency of the nonlocal model coefficients, the influence of the gradient possibly reflects into contributions of the adhesion/nonlocal chemotaxis to the (nonlinear) diffusion in the local setting obtained through the limiting procedure. 

\noindent
The set $\Om _r$ (as introduced in \textit{\cref{not}}) can be regarded as the 'domain of restricted sensing', meaning that there cells a priori sense only what happens inside $\Omega$, the domain of interest. The measure of this subdomain is a decreasing function of the sensing radius $r$. When $r\to 0$ the set $\Om _r$ tends to cover the whole domain $\Om $, whereas as $r$ increases the cells can sense at increasingly larger distances;   correspondingly, $\Om _r$ shrinks. For $r>\diam(\Omega)$ the restricted sensing domain is  empty: everywhere in $\Omega$  the cells  can perceive signals not only  from any point within $\Omega$ but potentially also from the outside. In this paper, however, we look at models with no-flux boundary conditions. This corresponds, e.g., to the impenetrability of the  walls of a Petri dish or that of  comparatively hard barriers limiting the areas populated by migrating cells, e.g. bones or cartilage material. As a result, the cells  in the boundary layer $\Om \setminus \Om_r$ have a much reduced ability to stretch their protrusions outside $\Om $ and thus gain little  information from without. To simplify matters, we assume in this work that there is no such information or it is insufficient to trigger any change in their behaviour. In the definitions of $\mathcal{T}_r$ and $\mathcal{S}_r$ this corresponds to the integrands being set to zero in $\Omega\backslash\overline{\Omega}_r$. 

\noindent
It is important to note that for points $x\in\Omega\backslash\overline{\Omega}_r$ the influence of a signal $p$ in a direction $y\in S_1$  is not taken into account by $\mathring{\nabla}_{r}$ at all if $x+ry\not\in\overline{\Omega}$. If $\mathcal{S}_r$ is used instead, then its contribution to the average is given by  
\begin{align*}
\tilde{y}:=n \left(\int_0^1\chi_{\Omega}\nabla p(x+rsy)\,ds\cdot y\right)y.
\end{align*}
Thus, thanks to integration w.r.t. $s$, the resulting vector $\tilde{y}$ assembles the impact of those parts of the segment connecting $x$ and $x+ry$ which are contained in $\Omega$. It is parallel to $y$, and it may have the same or  the opposite orientation.
In particular this means that  although for a certain range of directions large parts of the sensing region of a cell are actually outside $\Omega$,  this may still strongly influence the speed and actual direction of the drift.  The effect of integration w.r.t. $s$ in $\mathcal T_r$ is less obvious, since in this case the average w.r.t. $y$ is computed over the ball $B_1$. This already achieves the covering of the whole sensing region  by allowing a cell  to gather information about the signal not only in any direction $y/{|y|}$, but also at any distance less than $r$.  The additional integration over the path  $x+rsy$, $s\in[0,1]$, appears to mean that  cells at $x\in\Omega_r$ are able to measure the average of the signal gradient all along such line segment rather than its value directly at the ending  point. Indeed, from a biological viewpoint this description seems to make more sense, as cells do not jump from one position to another, nor do they send out their protrusions in a discontinuous way bypassing certain space points along a chosen direction. Averages over cell paths are then averaged w.r.t. $y$, which finally determines the direction of  population movement.  \cref{Ex2} indicates that the effect of even an extremely concentrated signal gradient is mollified by averaging. This agrees with our expectations from using non-locality. In higher dimensions $n\geq2$,
the    two-stage averaging in ${\cal T}_r$ (w.r.t. $s$ and $y$) produces a direction field which is smooth away from the concentration point and also  weakens but still keeps the singularity there.  In contrast, averaging only w.r.t. $y$ leads instead to jump discontinuities at a unit distance from the accumulation point.  Moreover, we remark that without  integrating w.r.t. $s$ in ${\cal T}_r(\nabla\cdot)$ one cannot regain ${\cal A}_r$. 

\noindent
The effect observed in  \cref{BndL} further supports the conjecture that the nonlocal operators which act directly on the signal gradients might actually be a more appropriate modelling tool. While inside the subdomain $\Omega_r$  there is no difference (recall \cref{LemAdVel,LemNGR}), inside the boundary layer $\Omega\backslash\overline{\Omega}_r$ the limiting behaviour as $r\rightarrow 0$ is qualitatively distinct. Indeed,  \cref{BndL} shows that using, e.g., $\mathcal A_r$, leads, for $r\rightarrow0$, to unnatural sharp singularities at the boundary of $\Omega$ even in the absence of  signal gradients, whereas this does not happen if $\mathcal{T}_r$ is used instead. Simulations in \cref{SecNum1} (see \cref{figure2}) confirm our theoretical findings and show a substantial difference between the solutions obtained with the two nonlocal  formulations involving \cref{DefAr} and \cref{IrTr}, respectively.  The choice \cref{IrTr} is motivated  above all from a mathematical viewpoint (as it enables a rigorous, well-justified passage to the limit for $r\to 0$),  but it also seems to make sense biologically, as our above comments and the simulations performed for the particular setting in \cref{SecNum1} suggest.

\section*{Acknowledgement}

MK and CS acknowledge the partial support of the research initiative \textit{Mathematics Applied to Real World Challenges} (\textit{MathApp}) of the TU Kaiserslautern. CS also acknowledges funding by the Federal Ministry of Education and Research (BMBF) in the project \textit{GlioMaTh}.

\phantomsection

\end{document}